\newcommand{\res}[1]{\textcolor{black}{#1}}
\newtheorem{theorem}{Theorem}
\newtheorem{definition}[theorem]{Definition}
\newtheorem{corollary}[theorem]{Corollary}
\newtheorem{remark}{Remark}
\newtheorem{assumption}{Assumption}
\newtheorem{proposition}[theorem]{Proposition}
\newtheorem{lemma}[theorem]{Lemma}
\newtheorem{problem}{Problem}
\newenvironment{rese}
  {\color{black}}
  {}
\title{\LARGE \bf Identification with Orthogonal Basis Functions: Convergence Speed, Asymptotic Bias, and Rate-Optimal Pole Selection
\thanks{This paper is supported in part by the National Natural Science Foundation of China (NSFC) under Grant Nos. 62461160313, 62273196, 62192752, and the BNRist project (No. BNR2024TD03003), in part by the Hong Kong RGC under the projects CityU 11207823, CityU 11206024, and N-CityU133/24. The authors would like to thank Prof. Rebing Wu for providing the inverted pendulum platform that supported additional experimental validation related to this work.}
}
\author{Jiayun Li$^{1}$, Yiwen Lu$^{1}$, Yilin Mo$^{1}$ and Jie Chen$^{2}$%
\thanks{$^{1}$Jiayun Li, Yiwen Lu, and Yilin Mo are with the Department of Automation and BNRist, Tsinghua University, Beijing, P.R. China. Emails: \texttt{\{lijiayun22, luyw20\}@mails.tsinghua.edu.cn, ylmo@tsinghua.edu.cn}.}%
\thanks{$^{2}$Jie Chen is with the Department of Electrical Engineering, City University of Hong Kong, P.R. China. Email: \texttt{jichen@cityu.edu.hk}.}%
}
\begin{document}

\maketitle
\thispagestyle{empty}
\pagestyle{empty}

\begin{abstract}
This paper is concerned with performance analysis and pole selection problem in identifying linear time-invariant (LTI) systems using orthogonal basis functions (OBFs), a system identification approach that consists of solving least-squares problems and selecting poles within the OBFs. Specifically, we analyze the convergence properties and asymptotic bias of the OBF algorithm, and propose a pole selection algorithm that robustly minimizes the worst-case identification bias, with the bias measured under the $\mathcal{H}_2$ error criterion. Our results include an analytical expression for the convergence rate and an explicit bound on the asymptotic identification bias, which depends on both the true system poles and the preselected model poles. Furthermore, we demonstrate that the pole selection algorithm is asymptotically optimal, achieving the fundamental lower bound on the identification bias. The algorithm explicitly determines the model poles as the so-called Tsuji points, and the asymptotic identification bias decreases exponentially with the number of basis functions, with the rate of decrease governed by the hyperbolic Chebyshev constant. Numerical experiments validate the derived bounds and demonstrate the effectiveness of the proposed pole selection algorithm.


\end{abstract}

\begin{keywords}
LTI systems, system identification, orthogonal basis functions, least-squares algorithms. 
\end{keywords}

\section{Introduction}\label{sec:introduction}
System identification has long been a focus in the design of control systems, and more broadly in the modeling of dynamical systems \cite{Ljung1998}, statistical time series analysis \cite{Madsen2008}, and in system realization theory \cite{Picci2015}, with early inspirations going back to the works of Zadeh \cite{Zadeh1962} and Kalman \cite{Kalman1965} in the 1960s. Traditional identification methods usually address this problem in the asymptotic sense, where the number of data samples tends to infinity. The central issue is whether the identified mathematical model asymptotically approaches the true physical system, depending on {\em a priori} information available about the system and {\em a posteriori} data acquired experimentally, and, if so, whether a bias arises between the model and the system. Classical identification algorithms, such as the Ho-Kalman algorithm~\cite{ho1966effective} and the multivariable output-error state space (MOESP) approach~\cite{Verhaegen1994}, are known to be asymptotically unbiased given infinite samples under certain conditions~\cite{Ljung1998, Bauer2000, Van1996}. However, these algorithms are typically nonlinear (involving nonlinear operations such as SVD or matrix inversion of the Hankel matrices) with respect to the collected data and may exhibit high sensitivity to noise when only finite-length samples are available~\cite{chiuso_ill-conditioning_2004, hachicha2014n4sid}, resulting in potential high signal-to-noise ratio requirement and thus high sample complexity. \res{This behavior is also illustrated in a numerical example in Fig.~\ref{fig:online_error}.}

It is intuitively plausible that the availability of {\em a priori} information could improve the data efficiency of identification algorithms. In this vein, several methods~\cite{fu1993optimum, Wahlberg1994system, Ninness1999} seek to approximate a system's transfer function using a linear combination of orthogonal basis functions (OBFs). A least-squares (LS) algorithm is leveraged to identify the coefficients of OBFs, which is linear with respect to the data. In contrast, the set of the OBFs, parametrized by their poles, are preselected based on \emph{a priori} information, eliminating the need for nonlinear identification algorithms to estimate system poles. Common choices for OBFs include the Laguerre~\cite{wahlberg_system_1991} and the Kautz functions~\cite{Wahlberg1994system}. Furthermore, Van den Hof et al.~\cite{van_den_hof_system_1995} propose the generalized OBF (GOBF) and derive upper bounds on the asymptotic identification bias as sample size approaches infinity~\cite{van_den_hof_system_1995,heuberger_generalized_1995, ninness_asymptotic_1996}. Ninness and Gustafsson~\cite{ninness_unifying_1997} develop a unifying construction of the OBFs, with which they show that the identification result converges in the mean-square sense at a rate of $O(N^{-1})$ with respect to the sample size $N$. Additionally, Ninness and Gomez \cite{ninness_asymptotic_1996} quantify the asymptotic bias in identification using OBF methods, which can vary significantly depending on the choice of OBFs. Consequently, the selection of poles in the OBFs plays a crucial role in the quality of the identified model, directly influencing the bias of the identification algorithm.


To facilitate the selection of OBF poles, one line of work aims to address this difficulty from a \emph{robustness} perspective, ensuring any system consistent with the given \emph{a priori} information can be accurately approximated by the OBFs with the selected poles. For example, Oliveira e Silva~\cite[Chapter 11]{van_den_hof_system_2005} introduces a pole selection algorithm for the OBFs by numerically solving a nonlinear minimax optimization problem, based on the pole region of the true system, i.e., a closed set in the complex plane that encompasses all poles of the true system, as specified by \emph{a priori} information. However, this problem involves a non-convex semi-infinite programming, which may suffer from drastic growth in computational complexity with the number of poles~\cite[Chapter 11]{van_den_hof_system_2005}. Toth et al.~\cite{toth_asymptotically_2009} propose a clustering algorithm for selecting OBF poles in the context of linear parameter-varying systems. However, the algorithm requires an ensemble of sampled system poles, a requirement that renders the algorithm's performance heavily dependent on the accuracy of the sampled poles. 


There are also less conventional methods that seek to select the OBF poles adaptively, such as particle swarm optimization~\cite{reginato_selecting_2007} and genetic optimization~\cite{sabatini_hybrid_2000}. Other approaches include solving an empirical Bayes problem~\cite{chen_regularized_2015, darwish2015perspectives, darwish_bayesian_2017} and applying a greedy algorithm using frequency-domain data~\cite{mi_adaptive_2021, mi_frequency-domain_2012}. However, these methods often suffer from high computational complexity and may become trapped in local optima.
Yet some other contemporary studies choose to bypass pole specification, in which 
pole locations are not preselected based on {\em a priori} information, but are determined from {\em a posteriori} data. This is the case with the $\mathcal{H}_\infty$ identification method (see, e.g., ~\cite{chen2000}), whereby the system model and henceforth its poles are synthesized through rational approximation of analytic functions under a worst-case $\mathcal{H}_\infty$ criterion. Algorithms in this category are also computationally demanding.

This paper enhances the OBF approach by proposing an optimal pole selection algorithm to \emph{robustly} minimize the worst-case identification bias among a particular class of systems. To this end, we first analyze, the convergence properties and the asymptotic bias of OBF algorithms (i.e., with prescribed poles and OBF bases). We then solve an asymptotic pole selection problem. We show that the so-called Tsuji points~\cite{tsuji1950metrical} asymptotically
achieve the minimal identification bias possible and hence are asymptotically optimal poles. These points can be determined by solving a maximization problem on the boundary of the pole region, ridding of the need to solve a significantly more complex minimax problem, such as the one proposed by Oliveira e Silva~\cite[Chapter 11]{van_den_hof_system_2005}. We also present an algorithm to compute a set of near-optimal initial points for the maximization problem, thus alleviating the issue of local optimality. The resulting system identification bias asymptotically achieves the fundamental lower bound on the worst-case identification bias. Our main contributions can be summarized as follows:
\begin{enumerate}
    \item With preselected bases and corresponding poles, we show that the identification error of the OBF methods measured under the $\mathcal H_2$ norm converges to an asymptotic bias almost surely at a rate of $O(N^{-0.5+\epsilon})$, for any $\epsilon>0$. We also derive an upper bound on the identification bias, which depends on the discrepancy between the true system poles and the preselected poles in the OBF basis functions. When system poles are known to lie within a specific region $\mathcal D$ within the unit disk in the complex plane, we establish a fundamental lower bound on the asymptotic bias, showing that the worst case bias decreases at a rate of at most $O(\tau(\mathcal D)^q)$, where $\tau(\mathcal D)<1$ is the hyperbolic Chebyshev constant of $\mathcal D$ and $q$ denotes the number of bases.
    \item The search for optimal poles, which minimize the asymptotic bias against all possible systems consistent with the \emph{a priori} information, naturally leads to a minimax problem. We propose to replace this minimax problem with a maximization problem on the boundary of the pole region, where the optimal solutions are the Tsuji points. More specifically, we show that the Tsuji points asymptotically achieve the fundamental lower bound on the worst-case identification bias as the number of bases tends to infinity.
    \item By a further analysis, the fundamental lower bound on the identification bias indicates that the number of data length required for identification of $n$-dimensional systems, known as the sample complexity, grows asymptotically at a rate of $O(\tau(\mathcal D)^{-n})$. This points to the intrinsic difficulty in identifying a system when its poles are unknown, especially for high-order systems.
\end{enumerate}

\textit{Paper structure:} In Section~\ref{sec:preliminary}, we introduce relevant concepts and results in complex analysis. Section~\ref{sec:problem_formulation} formulates the system identification problem and briefly reviews the OBF method. Following this, Section~\ref{sec:main_results} analyzes the performance of the OBF method, establishing an almost sure convergence rate with respect to the sample size and deriving an upper bound for the asymptotic identification bias. Section~\ref{sec:pole_allocation_algorithm} derives a fundamental lower bound on the worst-case asymptotic bias and proposes the Tsuji pole selection algorithm, along with an analysis of its performance. In Section~\ref{sec:fundamental_limit}, a fundamental limit on the sample complexity of identifying the system poles is derived. Section~\ref{sec:simulation} provides numerical results to demonstrate the efficacy of the algorithm. Section~\ref{sec:conclusion} concludes the paper.

\textit{Notations:} The symbol $\mathbf{I}_n$ denotes the $n$-dimensional unit matrix, and $\mathbf{1}_n$ is an $n$-dimensional column vector with entries all equal to $1$. Similarly, $\boldsymbol{0}_n$ denotes the $n$-dimensional zero vector. $A^\top$ denotes the matrix transpose and $A^H$ the conjugate transpose of $A$. $\|A\|$ denotes the $2$-norm for a vector $A$ and Frobenius norm for a matrix $A$. For a vector $\boldsymbol{x}$, $\text{diag}(\boldsymbol{x})$ denotes the diagonal square matrix with elements of $\boldsymbol{x}$ on its main diagonal. Moreover, $\mathbb{D}$ denotes the open unit disk on the complex plane, i.e., $\mathbb{D}\triangleq\{z\in\mathbb{C}:|z|<1\}$. We say that $|f(k)|\sim O(g(k))$ for $g(k)>0$ if there exists a constant $M>0$, such that $\lim_{k\to\infty}|f(k)|/g(k)\leq M$ for all $k=1, 2, \cdots$. The extended expectation $\mathbb{\bar E}$ is defined as $\bar{\mathbb E}(\phi(t))=\lim_{t\to\infty}\mathbb E(\phi(t))$ for a proper $\phi$.

\section{Mathematical Preliminary}\label{sec:preliminary}
In this section, we introduce some concepts and results in \textit{complex analysis}, which will be used in the sequel.

We begin with the \textit{pseudohyperbolic metric}, \res{a classical distance notion in complex analysis
(see, e.g.,~\cite{KIRSCH2005243}), defined as}
\begin{equation}\label{eq:pseudohyperbolic_metric}
    [z, \mu]_{\mathrm{h}}\triangleq \left|\frac{z-\mu}{1-\bar\mu z}\right|
\end{equation}
for any $z, \mu$ inside the open unit disk. Next, we introduce the notion of \textit{hyperbolic Chebyshev constant} of a closed region $\mathcal{D}$ inside the open unit disk $\mathbb{D}$.
\begin{definition}[(Finite) Hyperbolic Chebyshev constant (\res{following equation~(55) in}~\cite{KIRSCH2005243}), hyperbolic Chebyshev points]\label{def:chebyshev_constant}
    Let $q$ be an integer and $\mathcal{D}$ a closed subset of the open unit disk $\mathbb{D}$. The finite hyperbolic Chebyshev constant of $\mathcal{D}$ is defined as
    \begin{equation}\label{eq:finite_chebyshev_constant}
        \tau_q(\mathcal{D})\triangleq \min_{\mu_1, \cdots, \mu_q\in\mathcal{D}}\max_{z\in\mathcal{D}}\left(\prod_{k=1}^q [z, \mu_k]_{\mathrm{h}}\right)^{1/q},
    \end{equation}
    and the points $\mu_1, \mu_2, \cdots, \mu_q$ that achieve the minimum are called the hyperbolic Chebyshev points of $\mathcal{D}$.
The limit
    \begin{equation}\label{eq:chebyshev_constant}
        \tau(\mathcal{D})\triangleq\lim_{q \rightarrow \infty} \min_{\mu_1, \cdots, \mu_q\in\mathcal{D}} \max_{z \in \mathcal{D}}\left(\prod_{k=1}^q [z, \mu_k]_{\mathrm{h}}\right)^{1 / q},
    \end{equation}
which always exists, is referred to as the hyperbolic Chebyshev constant of the region $\mathcal{D}$.
\end{definition}

Similarly, we may define the hyperbolic transfinite diameter and the Tsuji points of a region $\mathcal{D}$.
\begin{definition}[Hyperbolic transfinite diameter, Tsuji points \res{(following equation~(54) in~}\cite{KIRSCH2005243})]\label{def:transfinite_diameter_tsuji_point}
Let $q$ be an integer and $\mathcal{D}$ a closed subset of the open unit disk $\mathbb{D}$. Define 
    \begin{equation}\label{eq:finite_transfinite_diameter}
			d_q(\mathcal{D}):= \max _{z_1, \ldots, z_q \in \mathcal{D}}\left\{\prod_{1 \leqslant j<k \leqslant q}\left[z_j,z_k\right]_{\mathrm{h}}\right\}^{1/\binom{q}{2}}
    \end{equation}
\res{Here $\binom{q}{2}=q(q-1)/2$ denotes the number of distinct pairs among the $q$ points.} The points $z_{k}, k=1, \cdots, q$ that attain the maximum are called the $q$-th Tsuji points of $\mathcal{D}$.

Moreover, the following limit exists and is referred to as the hyperbolic transfinite diameter of $\mathcal{D}$:
    \begin{equation}\label{eq:transfinite_diameter}
			d(\mathcal{D}):=\lim _{q \rightarrow \infty} \max _{z_1, \ldots, z_q \in \mathcal{D}}\left\{\prod_{1 \leqslant j<k \leqslant q}\left[z_j,z_k\right]_{\mathrm{h}}\right\}^{1/\binom{q}{2}}.
    \end{equation}
\end{definition}


The hyperbolic Chebyshev constant and the hyperbolic transfinite diameter are known to coincide: 
\begin{proposition}[\res{see page~278 in}~\cite{KIRSCH2005243}]\label{prop:tsuji_chebyshev_relation}
    For any closed set $\mathcal{D}\subset \mathbb{D}$,
    \begin{equation}
        \tau(\mathcal{D})=d(\mathcal{D}).
    \end{equation}
    Let $\mathbb{\bar D}_\rho$ denote the closed disk with radius $\rho<1$, i.e., $\mathbb{\bar D}_\rho\triangleq \{z:|z|\leq \rho\}$. If $\mathcal{D}\subset \mathbb{\bar D}_\rho$, then
    \begin{equation}
        \tau(\mathcal{D})=d(\mathcal{D})\leq \rho.
    \end{equation}
\end{proposition}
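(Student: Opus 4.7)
The plan is to prove this classical identity --- the hyperbolic analog of the Fekete--Szeg\H{o} theorem identifying transfinite diameter with Chebyshev constant --- by adapting the standard swap/insertion arguments from the Euclidean setting to the pseudohyperbolic metric $[\,\cdot\,,\,\cdot\,]_{\mathrm h}$.

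For the first equality $\tau(\mathcal D)=d(\mathcal D)$, I would first establish that $d_q(\mathcal D)$ is monotone non-increasing in $q$, which also secures the existence of the limit $d(\mathcal D)$. Let $z_1^\ast,\dots,z_{q+1}^\ast$ attain $d_{q+1}$. For each omitted index $j$, the remaining $q$-tuple is admissible for $d_q$, so $\prod_{k<\ell,\,k,\ell\neq j}[z_k^\ast,z_\ell^\ast]_{\mathrm h}\leq d_q^{\binom{q}{2}}$. Taking the product of these $q+1$ inequalities and noting that each unordered pair appears in exactly $q-1$ of the subsets yields the exponent identity $(q-1)\binom{q+1}{2}=(q+1)\binom{q}{2}$, which collapses cleanly to $d_{q+1}\leq d_q$.

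Next I would derive two interleaving inequalities relating $\tau_q$ and $d_q$. To bound $\tau_q$ from above, I would append an arbitrary $z\in\mathcal D$ to the Fekete $q$-tuple and invoke the definition of $d_{q+1}$, obtaining
\begin{equation*}
\prod_{k=1}^q [z,z_k^\ast]_{\mathrm h}\cdot d_q^{\binom{q}{2}}\leq d_{q+1}^{\binom{q+1}{2}}.
\end{equation*}
Maximising over $z$ and using the Fekete points as admissible Chebyshev poles gives $\tau_q^{\,q}\leq d_{q+1}^{\binom{q+1}{2}}/d_q^{\binom{q}{2}}$, which combined with the monotonicity $d_q\geq d_{q+1}$ simplifies to $\tau_q\leq d_{q+1}$. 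For the reverse direction, a swap argument shows that $\max_{z\in\mathcal D}\prod_{k\neq j}[z,z_k^\ast]_{\mathrm h}$ is attained at $z=z_j^\ast$, so comparing against the Chebyshev minimum over $q-1$ poles gives $\tau_{q-1}^{\,q-1}\leq\prod_{k\neq j}[z_j^\ast,z_k^\ast]_{\mathrm h}$; multiplying over $j$ recovers $\tau_{q-1}\leq d_q$. Passing to the limit in both directions yields $\tau(\mathcal D)=d(\mathcal D)$.

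For the disk bound, monotonicity of $d_q$ with respect to set inclusion is immediate from its max-formulation, so $d(\mathcal D)\leq d(\bar{\mathbb D}_\rho)$. To control $d(\bar{\mathbb D}_\rho)$ I would instead work with the now-equivalent Chebyshev quantity: choosing $\mu_1=\cdots=\mu_q=0\in\bar{\mathbb D}_\rho$, the pseudohyperbolic product reduces to $|z|^q$, whose maximum over $\bar{\mathbb D}_\rho$ is $\rho^q$; hence $\tau_q(\bar{\mathbb D}_\rho)\leq\rho$ for every $q$, giving $\tau(\bar{\mathbb D}_\rho)=d(\bar{\mathbb D}_\rho)\leq\rho$.

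The main technical obstacle is the careful bookkeeping of exponents in the combinatorial identities linking $\tau_q$ and $d_q$, so that the chain of inequalities is tight enough to collapse in the limit. A secondary subtlety is that Chebyshev poles must lie in $\mathcal D$, precluding a direct application of $\mu_k=0$ unless $0\in\mathcal D$; routing the disk bound through the transfinite-diameter formulation --- which enjoys the simpler monotonicity under set inclusion --- circumvents this issue.
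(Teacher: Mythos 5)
The paper offers no proof of this proposition --- it is quoted verbatim from Kirsch's work (``see page~278'') --- so there is nothing internal to compare against; judged on its own terms, your argument has a genuine gap: you never establish $d(\mathcal D)\le\tau(\mathcal D)$. Your ``forward'' step yields $\tau_q\le d_{q+1}$, and your ``reverse direction'' yields $\tau_{q-1}\le d_q$; these are the \emph{same} inequality up to reindexing, and both bound the Chebyshev constant from above by the transfinite diameter, so in the limit they give only $\tau(\mathcal D)\le d(\mathcal D)$. The swap argument cannot be turned around: it shows $\max_{z\in\mathcal D}\prod_{k\ne j}[z,z_k^\ast]_{\mathrm h}=\prod_{k\ne j}[z_j^\ast,z_k^\ast]_{\mathrm h}$, and comparing this with the Chebyshev \emph{minimum} over pole configurations can only produce $\tau_{q-1}^{q-1}\le\prod_{k\ne j}[z_j^\ast,z_k^\ast]_{\mathrm h}$, never an upper bound on $d_q$ in terms of $\tau$. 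The hard half of any Fekete--Szeg\H{o}-type identity is precisely $d\le\tau$, and it requires a separate idea: classically, one proves the Szeg\H{o}-type lower bound $\max_{z\in\mathcal D}\prod_{k=1}^q[z,\mu_k]_{\mathrm h}\ge d(\mathcal D)^q$ (up to factors negligible after taking $q$-th roots and limits) valid for \emph{every} admissible pole configuration, e.g.\ by writing the order-$(q+1)$ Fekete determinant with its last column replaced by the values of the Blaschke product $B(z)=\prod_k\frac{z-\mu_k}{1-\bar\mu_k z}$ at the Fekete points and expanding along that column, or by a potential-theoretic argument via the Green equilibrium measure. Without some version of this, the equality $\tau(\mathcal D)=d(\mathcal D)$ is not proved.

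The second claim inherits the same gap: your route $d(\mathcal D)\le d(\mathbb{\bar D}_\rho)=\tau(\mathbb{\bar D}_\rho)\le\rho$ is otherwise clean --- monotonicity of $d_q$ under set inclusion is immediate, the choice $\mu_1=\cdots=\mu_q=0$ gives $\tau_q(\mathbb{\bar D}_\rho)\le\rho$, and your observation that one cannot place the poles at the origin directly for a general $\mathcal D$ is a good one --- but the middle equality again invokes the unproven direction $d\le\tau$ for the disk. Everything else in the attempt is correct as far as it goes: the monotonicity $d_{q+1}\le d_q$ with the exponent count $(q-1)\binom{q+1}{2}=(q+1)\binom{q}{2}$, the interleaving inequality $\tau_q^q\le d_{q+1}^{\binom{q+1}{2}}/d_q^{\binom{q}{2}}\le d_{q+1}^q$, and the swap identity are all sound (modulo the degenerate case $d_q=0$, which is excluded for the continua considered in the paper).
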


For certain special sets $\mathcal{D}$, their hyperbolic Chebyshev constants can be found analytically. 
We summarize some of these results in the following lemma, whose proofs are relegated to 
Appendix~\ref{append:hyperbolic_chebyshev_constant}.

\begin{lemma}[Hyperbolic Chebyshev constant of disks and intervals]\label{lemma:hyperbolic_chebyshev_constant}
   Below are some special sets $\mathcal{D}$ whose hyperbolic Chebyshev constants can be computed analytically:
    \begin{enumerate}
        \item The hyperbolic Chebyshev constant of \textbf{a disk} $\mathbb{\bar D}_\rho$ is
        \[\tau(\mathbb{\bar D}_\rho)=\rho.\]\label{item:disk}
        \item The hyperbolic Chebyshev constant of a \textbf{real interval} $[\rho_1, \rho_2], -1<\rho_1<\rho_2<1$ is\label{item:real_interval}
        \[\tau([\rho_1, \rho_2])=\exp \left\{-\frac{\pi}{2} \frac{K\left(\sqrt{1-\tilde\rho^2}\right)}{K(\tilde\rho)}\right\},\]
        where
        \[\tilde \rho=\frac{\rho_2-\rho_1}{1-\rho_1\rho_2}, K(\rho)\triangleq\int_0^1 \frac{d x}{\sqrt{\left(1-x^2\right)\left(1-\rho^2 x^2\right)}}.\]
    \end{enumerate}
\end{lemma}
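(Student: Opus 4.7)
My plan is to handle the two items separately: a Jensen's-formula argument for the disk, and a Möbius reduction together with classical conformal-mapping theory for the real interval.

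For Part~\ref{item:disk}, the upper bound $\tau(\bar{\mathbb D}_\rho)\leq\rho$ follows immediately from Proposition~\ref{prop:tsuji_chebyshev_relation}, so I would focus on the matching lower bound. Given arbitrary $\mu_1,\dots,\mu_q\in\bar{\mathbb D}_\rho$, I would form the Blaschke product
\[
B(z)=\prod_{k=1}^{q}\frac{z-\mu_k}{1-\bar\mu_k z},\qquad |B(z)|=\prod_{k=1}^{q}[z,\mu_k]_{\mathrm h},
\]
and apply Jensen's formula to each factor $B_k$ on the circle $\{|z|=\rho\}$. Since $|\mu_k|\leq\rho$, Jensen's formula yields that the circular mean of $\log|B_k(\rho e^{i\theta})|$ equals exactly $\log\rho$ (with the boundary case $|\mu_k|=\rho$ handled by continuity). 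Summing over $k$ gives
\[
\frac{1}{2\pi}\int_0^{2\pi}\log|B(\rho e^{i\theta})|\,d\theta = q\log\rho,
\]
so the pointwise maximum dominates the mean and $\max_{|z|=\rho}|B(z)|\geq\rho^q$; by the maximum modulus principle this is also the maximum over $\bar{\mathbb D}_\rho$. Since $(\mu_k)$ was arbitrary, $\tau_q(\bar{\mathbb D}_\rho)\geq\rho$ for every $q$, completing this part.

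For Part~\ref{item:real_interval}, I would first exploit the Möbius invariance of the pseudohyperbolic metric by choosing a real disk automorphism $\phi(z)=(z-a)/(1-az)$ that sends $[\rho_1,\rho_2]$ to a symmetric interval $[-b,b]$, where $b\in(0,1)$ is determined by $2b/(1+b^2)=\tilde\rho=[\rho_1,\rho_2]_{\mathrm h}$ (the parameter $a$ is the unique root in $(-1,1)$ of the quadratic arising from $\phi(\rho_1)+\phi(\rho_2)=0$). Since this transformation preserves $\tau$, it suffices to compute $\tau([-b,b])$. Here I would invoke the classical conformal-mapping theory that represents the ring domain $\mathbb D\setminus[-b,b]$ as an annulus $\{r<|w|<1\}$ via Jacobi elliptic functions, identifying the hyperbolic transfinite diameter of $[-b,b]$ with the inner radius $r$ of this annulus (see, e.g.,~\cite{KIRSCH2005243}). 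Computing $r$ in terms of complete elliptic integrals then gives
\[
r=\exp\!\left(-\frac{\pi}{2}\,\frac{K(\sqrt{1-\tilde\rho^2})}{K(\tilde\rho)}\right),
\]
which together with Proposition~\ref{prop:tsuji_chebyshev_relation} yields the stated formula.

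The main obstacle I anticipate lies entirely in Part~\ref{item:real_interval}: rigorously identifying $\tau([-b,b])$ with the inner radius of the complementary annulus, and carrying out the Jacobi-elliptic-function parametrization precisely enough to extract the ratio $K(\sqrt{1-\tilde\rho^2})/K(\tilde\rho)$. Part~\ref{item:disk}, by contrast, is essentially a one-line application of Jensen's formula on the disk of radius $\rho$ and should present no essential difficulty.
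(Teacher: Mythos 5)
Your proposal is correct, but it diverges from the paper's proof in instructive ways. For Part~\ref{item:disk} the paper gives no argument at all --- it simply cites~\cite{KIRSCH2005243} --- whereas you supply a self-contained proof: the upper bound via Proposition~\ref{prop:tsuji_chebyshev_relation} and the lower bound via Jensen's formula applied to the Blaschke product on $\{|z|=\rho\}$. Your computation checks out (each factor has its zero $\mu_k$ inside and its pole $1/\bar\mu_k$ outside the disk of radius $\rho$, so the circular mean of $\log|B_k|$ is $\log|\mu_k|+\log(\rho/|\mu_k|)=\log\rho$), and it yields $\tau_q(\mathbb{\bar D}_\rho)\geq\rho$ for \emph{every} finite $q$, which is slightly stronger than the limiting statement. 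For Part~\ref{item:real_interval} both you and the paper rely on M\"obius invariance, but you normalize to a symmetric interval $[-b,b]$ and then identify $\tau([-b,b])$ with the inner radius of the conformally equivalent annulus, which forces you to carry out (or cite) the elliptic-function uniformization of the ring domain $\mathbb D\setminus[-b,b]$ --- essentially the construction the paper performs separately in Theorem~\ref{thm:conformal_mapping} of Appendix~\ref{append:tsuji_points}. The paper instead maps $[\rho_1,\rho_2]$ directly to $[0,\tilde\rho]$ via $f(z)=(z-\rho_1)/(1-\bar\rho_1 z)$ and invokes the reference's ready-made formula for $\tau([0,\rho])$ (Lemma~\ref{lemma:real_chebyshev_constant}) together with the conformal-invariance result (Lemma~\ref{lemma:conformal_invariant}), which is shorter; your detour through $[-b,b]$ is harmless but unnecessary, and the two answers agree because $\tilde\rho=[\rho_1,\rho_2]_{\mathrm h}$ is preserved by the disk automorphism. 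The obstacle you flag --- rigorously equating the hyperbolic transfinite diameter with the annulus inner radius --- is real if you insist on proving it, but it is a classical fact available in~\cite{KIRSCH2005243}, so citing it puts you on the same footing as the paper.
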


\section{Problem Formulation}\label{sec:problem_formulation}
Consider a 
stable LTI system with $m$ inputs and $p$ outputs:
\begin{equation}\label{eq:true_system}
    y_t=G(z)u_t+H(z)e_t =G(z)u_t + v_t,
\end{equation}
where 
$\{u_t\}$ is a quasistationary signal and $\{e_t\}$ is an i.i.d. stochastic vector process with zero mean and unit covariance. The variable $z$ represents the time-shift operator, i.e., $zu_t=u_{t+1}$. 
We make the following assumptions about the system:
\begin{assumption}\label{assump:system_stability} 
\noindent 
    \begin{enumerate}
        \item $G(z)$ and $H(z)$ are both strictly stable transfer functions; that is, all the poles of $G(z)$ and $H(z)$ are in $\mathbb{D}$. Additionally, $G(z)$ has relative degree at least $1$.
        
        \item The system~\eqref{eq:true_system} is controlled by a linear controller and is closed-loop stable. The controller can be modeled as another LTI system:
        \begin{align}\label{eq:stabilizing_controller}
            u_t=G_u(z)y_t+H_u(z)\epsilon_t,
        \end{align}
        where $\{\epsilon_t\}$ is an i.i.d. stochastic vector process with zero mean and unit covariance, serving as probing noise and is assumed to be mutually independent with $\{e_t\}$. \res{The controller transfer functions $G_u(z)$ and $H_u(z)$, as well as the realization of the probing signal $\epsilon_t$, are known and available.}\label{assump:control_input}
    \end{enumerate}
\end{assumption}
\vspace{-\baselineskip}%
\res{\begin{assumption}[Persistent excitation]\label{assump:input}
The externally injected probing component
\(
H_u(z)\epsilon_t
\)
is persistently exciting of sufficiently high order.
\end{assumption}}
\begin{rese}
Assumption~\ref{assump:system_stability} and~\ref{assump:input} implies that the resulting control input $u_t$ is persistently exciting of sufficiently high order.
\begin{remark}
Under Assumptions~\ref{assump:system_stability} and~\ref{assump:input},
the plant transfer function $G(z)$ is uniquely identifiable from closed-loop data.
The noise model $H(z)$ is not explicitly identified in this work, and its uniqueness
is not relevant to the subsequent analysis.
\end{remark}
\end{rese}


In this paper, we focus on applying the OBF method to identify the system~\eqref{eq:true_system}.  Specifically, we need to approximate $G(z)$ with $\check G(z)$, which is a linear combination of a set of predefined basis functions $V_k(z)$:
\begin{equation}
    \check G(z)=\sum_{k=1}^{q} \check R_k V_k(z).
\end{equation}
The coefficient matrices in the linear combinations are, in general, complex-valued matrices and satisfy $\check R_k\in\mathbb C^{p\times m}$.  
On the other hand, the preselected basis functions $V_k(z)$, characterized by a group of $q$ poles $\{\mu_1, \cdots, \mu_q\}$, can be chosen as:
\begin{equation}\label{eq:Vk_basis}
    V_k(z)=\frac{1}{z-\mu_k}, k=1, \cdots, q,
\end{equation}
where $\mu_k$ are distinct from each other. These bases can be further orthonormalized using standard procedures, to obtain the \emph{unified construction} of OBFs~\cite{ninness_unifying_1997} as follows:
\begin{equation}\label{eq:poles_induced_obfs}
     \mathcal V_k(z)=\frac{\sqrt{1-|\mu_k|^2}}{z-\mu_k}\prod_{l=1}^{k-1}\frac{1-\bar\mu_l z}{z-\mu_l}, k=1, \cdots, q.
\end{equation}
This unified construction includes a wide variety of commonly used OBFs, such as Laguerre bases, Kautz bases, and generalized OBF~\cite{van_den_hof_system_1995}. The readers can refer to~\cite{van_den_hof_system_2005} for more details.

\begin{remark}
    Although the orthonormalization procedure may help with the numerical stability of the least-squares algorithm, it does not alter the span of the basis functions, the identification error of the least-squares problem, or the asymptotic approximation bias of the identification discussed later.
\end{remark}
Owing to the equivalence of the span between $V_k$ in~\eqref{eq:Vk_basis} and the unified construction $\mathcal V_k$ in~\eqref{eq:poles_induced_obfs}, in the rest of the paper, we slightly abuse the definition by adopting the bases $V_k$ as the preselected bases in the OBF method for simplicity. Then, the OBF identification problem can be stated as follows:
\begin{problem}[System identification using preselected $V_k$]\label{prob:system_identification}
    \begin{equation}
        \min_{\check R_1, \cdots, \check R_q}\left\|G(z)-\sum_{k=1}^q \frac{\check R_k}{z-\mu_k}\right\|_2.
    \end{equation}
\end{problem}

To find a near-optimal solution for Problem~\ref{prob:system_identification} in a data-driven manner given $N$ sample pairs $u_{1:N}=\{u_1, \cdots, u_N\}, y_{1:N}=\{y_{1}, \cdots, y_{N}\}$, we solve the following least-squares regression~\cite[Chapter 4]{van_den_hof_system_2005}. \res{Under Assumption~\ref{assump:input}, the resulting control input $u_t$
is persistently exciting of sufficiently high order, which guarantees
that the least-squares problem~\eqref{eq:finite_least_squares_1} is
well-defined after a finite warm-up period:}
\begin{align}\label{eq:finite_least_squares_1}
    &\check R_1(N), \cdots, \check R_q(N)\nonumber \\
    &=\arg\min_{\check R_1, \cdots, \check R_{q}}\frac{1}{N}\sum_{t=1}^N \left\|y_t-\sum_{k=1}^{q} \frac{\check R_k}{z-\mu_k}u_t\right\|^2,
\end{align}
where $\|\cdot\|$ denotes the $2$-norm of a vector.  Then, the identified model using $N$ samples is given by \[\check G_N(z)=\sum_{k=1}^q \frac{\check R_k(N)}{z-\mu_k}.\]


\section{Convergence and Asymptotic Bias}\label{sec:main_results}
The overarching goal of this section is to analyze the convergence property of the least-squares algorithm in solving~\eqref{eq:finite_least_squares_1} in the almost sure sense as the number of samples tends to infinity, and to quantify the asymptotic bias using the bases $1/(z-\mu_k)$.


\subsection{Convergence Analysis}\label{subsec:optimal_system_approximation_bias}
Define the solution to the following expected least-squares problem as:
\begin{equation}
	\left(\check R_1^*, \cdots, \check R_q^*\right)=\arg\min_{\check R_1, \cdots, \check R_q}\mathbb{\bar E}\left\|y_t-\sum_{k=1}^q\frac{\check R_k}{z-\mu_k}u_t\right\|^2,\label{eq:expected_least_squares_problem}
\end{equation}
where for a vector (or matrix) valued random process $\{X_t\}$, we denote its limit expected value as $\mathbb{\bar E}(X_t)=\lim_{t\to\infty} \mathbb E(X_t)$~\cite{van_den_hof_system_1995}. The corresponding transfer function is given as:
\begin{equation}\label{eq:asymptotic_result}
    \check G^*(z)=\sum_{k=1}^q \frac{\check R_k^*}{z-\mu_k}.
\end{equation}

The following theorem establishes the property that the identified system $\check G_N(z)$ by solving the least-squares problem~\eqref{eq:finite_least_squares_1} with $N$ samples converges to $\check G^*(z)$ almost surely.
\begin{theorem}[Almost sure convergence]\label{thm:convergence}
    The $\mathcal H_2$ norm of the error between the identified model $\check G_N(z)$ using $N$ samples and the asymptotic model $\check G^*(z)$ satisfies
    \begin{equation}\label{eq:gap_convergence}
        \lim_{N\to\infty}\frac{\|\check G_N(z)-\check G^*(z)\|_{2}}{N^{-0.5+\epsilon}}=0 \ a.s.,
    \end{equation}
    for all $\epsilon>0$.
\end{theorem}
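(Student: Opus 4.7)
The plan is to reduce the statement to a concentration rate for empirical autocorrelations of the filtered closed-loop signals. Stacking the filtered regressors $\phi_t^{(k)}=u_t/(z-\mu_k)$ into $\phi_t=[\phi_t^{(1)\top},\ldots,\phi_t^{(q)\top}]^\top$, the finite-sample solution of~\eqref{eq:finite_least_squares_1} can be written as $\check R(N)=\hat\Gamma_N\hat\Sigma_N^{-1}$ with $\hat\Sigma_N=(1/N)\sum_t\phi_t\phi_t^H$ and $\hat\Gamma_N=(1/N)\sum_t y_t\phi_t^H$, while the asymptotic solution of~\eqref{eq:expected_least_squares_problem} satisfies $\check R^*=\Gamma\Sigma^{-1}$ with $\Sigma=\bar{\mathbb E}[\phi_t\phi_t^H]$ and $\Gamma=\bar{\mathbb E}[y_t\phi_t^H]$. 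Under Assumptions~\ref{assump:system_stability} and~\ref{assump:input}, $\Sigma$ is positive definite and $\Gamma$ is finite, so both regression problems are well posed.

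First, I would establish that $\|\hat\Sigma_N-\Sigma\|$ and $\|\hat\Gamma_N-\Gamma\|$ are both $o(N^{-0.5+\epsilon})$ almost surely for every $\epsilon>0$. Because the closed-loop signals $u_t$ and $y_t$ are strictly stable linear functionals of the i.i.d.\ driving noises $\{e_t\}$ and $\{\epsilon_t\}$, the entries of $\phi_t\phi_t^H$ and $y_t\phi_t^H$ admit absolutely summable MA-type representations in these i.i.d.\ sources. A standard law of the iterated logarithm for such weakly dependent quasi-stationary processes (cf.~\cite{Ljung1998}, Ch.~8) then yields the sharper rate $O(\sqrt{\log\log N/N})$ a.s., which is in particular $o(N^{-0.5+\epsilon})$ for every $\epsilon>0$.

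Second, invertibility of $\Sigma$ combined with this concentration gives $\hat\Sigma_N^{-1}-\Sigma^{-1}=o(N^{-0.5+\epsilon})$ a.s., via a Neumann-series perturbation argument on the (eventually almost sure) event that $\hat\Sigma_N$ is non-singular. Writing
\[
\check R(N)-\check R^*=(\hat\Gamma_N-\Gamma)\Sigma^{-1}+\hat\Gamma_N(\hat\Sigma_N^{-1}-\Sigma^{-1}),
\]
almost sure boundedness of $\hat\Gamma_N$ propagates the rate to the coefficient error. Finally, since the poles $\{\mu_k\}$ are fixed, the linear map $(R_1,\ldots,R_q)\mapsto\sum_k R_k/(z-\mu_k)$ from matrix tuples into $\mathcal H_2$ is bounded with operator norm depending only on $\{\mu_k\}$, hence $\|\check G_N-\check G^*\|_2\le C(\{\mu_k\})\,\|\check R(N)-\check R^*\|$, giving the asserted rate.

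The hard part is the first step: securing an almost sure (rather than in-probability or mean-square) rate for the empirical autocovariances of closed-loop signals at the required exponent. The signals are only quasi-stationary and are coupled through the feedback loop~\eqref{eq:stabilizing_controller}, so one must carefully verify that the moment and mixing conditions of the invoked LIL are indeed satisfied in this closed-loop setting; once this is in hand, the subsequent perturbation and basis-norm estimates are essentially routine.
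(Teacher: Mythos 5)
Your proposal follows essentially the same route as the paper: both reduce the claim to an almost-sure $o(N^{-0.5+\epsilon})$ rate for the empirical second moments of the filtered regressors and outputs, transfer that rate to the least-squares coefficients through a smooth perturbation (delta-method) map, and then bound the $\mathcal H_2$ error by the coefficient error times a Gram-matrix constant depending only on $\{\mu_k\}$. The one difference is the tool for the concentration step: you appeal to a law of the iterated logarithm for weakly dependent quasi-stationary processes, whereas the paper constructs an explicit augmented state-space system driven by the i.i.d.\ noises $\{e_t\},\{\epsilon_t\}$ and invokes a martingale-based sample-covariance lemma from \cite{liu2020online} (which the authors themselves describe as the analogue of the LIL in this correlated setting) --- so the verification you correctly flag as the hard part is exactly where the paper supplies its specific machinery.
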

\begin{proof}
    Let a state-space realization of the system in~\eqref{eq:true_system} be given by~\cite[Chapter 2]{van_den_hof_system_2005}:
    \begin{align}
        x_{t+1}=Ax_t+Bu_t+B_e e_{t+1},\quad y_t=Cx_t,
    \end{align}
    and let a state-space realization of the input dynamics~\eqref{eq:stabilizing_controller} be
    \begin{align}
        \varphi_{t}=A_\varphi\varphi_{t-1}+B_{\varphi y} y_t+B_{\varphi \epsilon}\epsilon_t, \quad u_t=C_{\varphi}\varphi_t.
    \end{align}

    Then, the overall system incorporating the true system $G(z)$, the input dynamics, and the identified model can be collectively represented by the augmented system
    \begin{align}
        \tilde x_{t+1}=\tilde A\tilde x_t+\tilde w_t, \quad
        \psi_t=\tilde C\tilde x_t, \label{eq:enlarged_system}
    \end{align}
    where $\tilde x_t=\begin{bmatrix}
        x_t^\top\ y_t^\top\  \varphi_t^\top\  u_t^\top \  \check x_t^\top
    \end{bmatrix}^\top, \psi_t=\begin{bmatrix}
        y_t^\top \  \check x_t^\top
    \end{bmatrix}^\top$,
    \begin{align}
        \tilde A=\begin{bmatrix}
            A & 0 & 0 & B & 0 \\
            CA & 0 & 0 & CB & 0 \\
            0 & B_{\varphi y} & A_\varphi & 0 & 0 \\
            0 & C_\varphi B_{\varphi y} & C_{\varphi}A_\varphi & 0 & 0 \\
            0 & 0 & 0 & (\boldsymbol{1}_q\otimes I_m) & \check A
        \end{bmatrix},\nonumber
    \end{align}
    $\check x_t$ is defined recursively as:
    \[\check x_{t+1}=\check A\check x_t+(\boldsymbol{1}_{q}\otimes u_t),\quad t=1, \cdots, N,\]
    with $\check x_0=0$, where $\check A=\mathrm{diag}(\mu_1, \cdots, \mu_q)\otimes I_m$, and
    \[\tilde w_t=\begin{bmatrix} B_e & 0 \\ CB_e & 0 \\ 0 & B_{\varphi \epsilon} \\ 0 & C_\varphi B_{\varphi\epsilon} \\ 0 & 0\end{bmatrix}\begin{bmatrix} e_{t+1} \\ \epsilon_t\end{bmatrix}, \tilde C=\begin{bmatrix} 0 & I & 0 & 0 & 0 \\ 0 & 0 & 0 & 0 & I\end{bmatrix}.\]
    It is easy to verify that $\tilde w_t$ is an i.i.d. random noise with zero mean. 
    
    Let \( \mathcal{\check{W}} = \lim_{t \to \infty} \mathbb{E}(\psi_t \psi_t^H) \). We can verify that the augmented system satisfies the assumptions in Lemma~7 of~\cite{liu2020online}, which allows us to prove that for all $\epsilon>0$,
    \[\lim_{N\to\infty}\frac{\frac{1}{N}\sum_{t=1}^{N}\psi_t\psi_t^H-\mathcal{\check W}}{N^{-0.5+\epsilon}}=0\ a.s.\]
    \res{This lemma yields an almost-sure convergence rate of order $N^{-1/2+\epsilon}$
    for the \emph{sample covariance} of outputs of stable LTI systems.
    In the temporally correlated setting considered here, this martingale-based result
    plays a role analogous to that of the law of the iterated logarithm in the i.i.d.\ case.
    }

    Moreover, note that the solution to the least-squares problem~\eqref{eq:finite_least_squares_1}, \( \check{R}_k(N) \), is directly related to the subblocks of \( \frac{1}{N}\sum_{t=1}^N \psi_t \psi_t^H \), provided that \( N \) is sufficiently large to ensure that the solution 
    \[
    [\check{R}_1(N)\ \cdots\ \check{R}_q(N)] = \left(\frac{1}{N} \sum_{t=1}^N y_t \check{x}_t^H\right) \left(\frac{1}{N} \sum_{t=1}^N \check{x}_t \check{x}_t^H\right)^{-1},
    \]
    exists. Meanwhile, the asymptotic solution \( \check{R}_k^* \) exhibits the same relationship with the subblocks of \( \mathcal{\check{W}}\). To explicitly express this relationship, we define the matrix function \( \mathcal{A}(X) \) for a given matrix \( X\in\mathbb C^{(p+qm)\times (p+qm)} \) as follows:

    \begin{align}
        &\mathcal A(X)\triangleq\begin{bmatrix}
        I_p & 0
    \end{bmatrix}(\mathcal{\check W}-X)\begin{bmatrix}
        0 \\ I_{qm}
    \end{bmatrix}\nonumber \\
    &\qquad\qquad\qquad\left(\begin{bmatrix}
        0 & I_{qm}\end{bmatrix}
        (\mathcal{\check W}-X)\begin{bmatrix}
        0 \\ I_{qm}
        \end{bmatrix}\right)^{-1}.
    \end{align}
    By Assumption~\ref{assump:input}, $\lim_{t\to\infty}\mathbb{E}(\check x_t\check x_t^H)$ is invertible and for sufficiently large $N$, $\frac{1}{N}\sum_{t=1}^N \check x_t\check x_t^H$ is invertible. Therefore, the following equation is well-defined and holds:
    \begin{equation*}
        \begin{aligned}
            &\begin{bmatrix}\check R_1(N) & \cdots & \check R_q(N)\end{bmatrix}=\mathcal{A}\left(\mathcal{\check W}-\frac{1}{N}\sum_{t=1}^N\psi_t\psi_t^H\right), \\
            & \begin{bmatrix}\check R_1^* & \cdots & \check R_q^*\end{bmatrix}=\mathcal{A}(0).
        \end{aligned}
    \end{equation*}
    Taking notice of the fact that $\mathcal{A}(X)$ is differentiable at $0$, by Lemma~3~3) of~\cite{liu2020online}\res{, which ensures that the convergence of
    $\frac{1}{N}\sum_{t=1}^N \psi_t\psi_t^H$ (and its rate) is preserved under the differentiable mapping $\mathcal{A}(\cdot)$, we obtain that} for all $\epsilon>0$,
    \begin{equation}\label{eq:R_rate}
        \lim_{N\to\infty}\frac{\|\check R_k(N)-\check R_k^*\|}{N^{-0.5+\epsilon}}=0\ a.s,\quad k=1, 2, \cdots, q.
    \end{equation}
    Finally, by the definition of the $\mathcal H_2$ norm,
    \begin{align}
        &\|\check G_N(z)-\check G^*(z)\|_2\nonumber \\
        &=(\mathrm{tr}[(\check R(N)-\check R^*)\Xi_{\boldsymbol{\mu, \boldsymbol{\mu}}}(\check R(N)-\check R^*)^H])^{1/2} \nonumber \\
        &\leq\|\Xi_{\boldsymbol{\mu}, \boldsymbol{\mu}}\|^{1/2}\|\check R(N)-\check R^*\|,\label{eq:yopt_ycheck_bound}
    \end{align}
    where $\|\cdot\|$ denotes the Frobenius norm of a matrix, $\check R(N)=[\check R_1(N)\ \cdots\ \check R_q(N)], \check R^*=[\check R_1^*\ \cdots\ \check R_q^*]$ and 
    \begin{align}
        \Xi_{\boldsymbol{\mu}, \boldsymbol{\mu}}=\begin{bmatrix}
            \frac{1}{1-\mu_1\bar\mu_1} & \frac{1}{1-\mu_1\bar\mu_2} & \cdots & \frac{1}{1-\mu_1\bar\mu_q} \\
            \frac{1}{1-\mu_2\bar\mu_1} & \frac{1}{1-\mu_2\bar\mu_2} & \cdots & \frac{1}{1-\mu_2\bar\mu_q} \\
            \vdots & \vdots & \ddots & \vdots \\
            \frac{1}{1-\mu_q\bar\mu_1} & \frac{1}{1-\mu_q\bar\mu_2} & \cdots & \frac{1}{1-\mu_q\bar\mu_q}
        \end{bmatrix}\otimes I_m.\label{eq:xi_mu_mu}
    \end{align}
    Moreover, $\|\Xi_{\boldsymbol{\mu}, \boldsymbol{\mu}}\|^{1/2}\leq \frac{q\sqrt{m}}{\sqrt{1-\rho_\mu^2}}$, where $\rho_\mu=\max_{k=1, \cdots, q}|\mu_k|$.

    Hence, the theorem is proved by combining~\eqref{eq:R_rate} with~\eqref{eq:yopt_ycheck_bound}.
\end{proof}

\subsection{Asymptotic Bias Analysis}
This subsection quantifies the bias between $G(z)$ and the asymptotic result $\check G^*(z)$ of the least-squares algorithm~\eqref{eq:finite_least_squares_1}. For this purpose, we introduce the following additional assumptions:
\begin{assumption}\label{assump:bias}
    \begin{itemize}
        \item The system $G(z)$ has no repeated poles.
        \item The poles of the system $G(z)$ are within a closed continuum $\mathcal{D}\subset\mathbb{D}$, where $\mathcal{D}$ is known.
    \end{itemize}
\end{assumption}
\begin{remark}
    The assumption of no repeated poles is standard in the OBF literature~\cite{ninness_asymptotic_1996, van_den_hof_system_2005}. Furthermore, the assumption that the pole region $\mathcal D$ is known based on \emph{a priori} information is common in studies focusing on the pole selection algorithms for OBFs~\cite{van_den_hof_system_2005,toth_asymptotically_2009}.
\end{remark}
As a result, the transfer function $G(z)$ of the system can be written in the following form~\cite[Chapter 11]{van_den_hof_system_2005}:
    \begin{equation}\label{eq:tf_decomposition}
        G(z)=\sum_{j=1}^n\frac{R_j}{z-\lambda_j}, R_j\in\mathbb{C}^{p\times m},
    \end{equation}
where $\lambda_j$ are the \emph{distinct} poles of the system, and $n$ denotes the number of poles. 

Let $\Phi_u(\omega)$ denote the power spectral density (PSD) of the input signal $u_t$. Specifically,
\[\Phi_u(\omega)=\sum_{k=-\infty}^\infty R_u(k)e^{ik\omega},\]
where $R_u(k)=\mathbb{\bar E}(u_tu_{t-k}^H)$ denotes the autocorrelation function of $u_t$.

The following theorem provides an upper bound on the asymptotic bias of the least-squares algorithm~\eqref{eq:finite_least_squares_1} using the bases $V_k(z)$ as the number of samples tends to infinity. The proof of the theorem is provided in Appendix~\ref{append:system_approx_error}.
\begin{theorem}[Asymptotic bias of the least-squares algorithm]\label{cor:system_approx_error}
    For a fixed group of bases $\{1/(z-\mu_k)\}_{k=1}^q$, the system approximation bias of $G$ using the asymptotic solution of the least-squares problem $\check G^*(z)=\sum_{k=1}^q \frac{\check R_k^*}{z-\mu_k}$ satisfies the inequality
    \begin{align}
        &\|\check G^*(z)-G(z)\|_2\nonumber \\
        &\leq \left[1+\frac{\mathrm{ess}\sup_{\omega}\|\Phi_u(\omega)\|}{\mathrm{ess}\inf_{\omega}\|\Phi_u(\omega)\|}\right]\sum_{j=1}^n \frac{\|R_j\|}{\sqrt{1-|\lambda_j|^2}}\prod_{k=1}^q [\lambda_j, \mu_k]_{\mathrm h} \nonumber\\
        &\leq \left[1+\frac{\mathrm{ess}\sup_{\omega}\|\Phi_u(\omega)\|}{\mathrm{ess}\inf_{\omega}\|\Phi_u(\omega)\|}\right]\frac{\bar R}{\sqrt{1-\rho_\lambda^2}}\max_{j=1, \cdots, n}\prod_{k=1}^q[\lambda_j, \mu_k]_{\mathrm{h}}, \label{eq:obf_bias}
    \end{align}
    where $\bar R=\sum_{j=1}^n \|R_j\|$ and $\rho_\lambda=\max_{j=1, \cdots, n}|\lambda_j|$.
\end{theorem}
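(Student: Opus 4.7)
The plan is to bound $\|\check G^* - G\|_2$ by splitting it into the best unweighted $\mathcal H_2$ approximation error of $G$ in the OBF span plus a correction arising from the fact that the asymptotic least-squares fit is implicitly weighted by $\Phi_u$. \emph{Step 1 (identify the asymptotic cost):} By Parseval's theorem, the $\check G$-dependent part of $\mathbb{\bar E}\|y_t - \check G u_t\|^2$ coincides with the $\Phi_u$-weighted $\mathcal H_2$ norm $\|G-\check G\|_{\Phi_u}^2 \triangleq \frac{1}{2\pi}\int_{-\pi}^{\pi}\mathrm{tr}[(G-\check G)\Phi_u(G-\check G)^H]\,d\omega$, so (modulo a noise cross-term induced by the feedback) $\check G^*$ is characterized as the minimizer of $\|G-\check G\|_{\Phi_u}$ over the OBF span $\mathcal S_q=\mathrm{span}\{1/(z-\mu_k)\}_{k=1}^q$.

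\emph{Step 2 (pass from weighted to unweighted):} Let $\hat G$ denote the best unweighted $\mathcal H_2$ projection of $G$ onto $\mathcal S_q$. The pointwise sandwich $\mathrm{ess\,inf}_\omega\|\Phi_u(\omega)\|\,\|\cdot\|_2^2\le\|\cdot\|_{\Phi_u}^2\le\mathrm{ess\,sup}_\omega\|\Phi_u(\omega)\|\,\|\cdot\|_2^2$ together with the $\Phi_u$-optimality $\|G-\check G^*\|_{\Phi_u}\le\|G-\hat G\|_{\Phi_u}$ yields $\|\hat G-\check G^*\|_2\le\frac{\mathrm{ess\,sup}\|\Phi_u\|}{\mathrm{ess\,inf}\|\Phi_u\|}\|G-\hat G\|_2$; a triangle inequality then delivers the prefactor $1+\mathrm{ess\,sup}\|\Phi_u\|/\mathrm{ess\,inf}\|\Phi_u\|$ in front of $\|G-\hat G\|_2$.

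\emph{Step 3 (solve the unweighted problem pole-by-pole via Blaschke products):} Using the partial-fraction form $G(z)=\sum_{j=1}^n R_j/(z-\lambda_j)$, pick the separable candidate $\hat G=\sum_j R_j\check\phi_{\lambda_j}$, where $\check\phi_{\lambda_j}$ is the best scalar $\mathcal H_2$ approximation of $1/(z-\lambda_j)$ in $\mathcal S_q$. The triangle inequality gives $\|G-\hat G\|_2\le\sum_j\|R_j\|\,\|1/(z-\lambda_j)-\check\phi_{\lambda_j}\|_2$. The per-pole scalar error is solved exactly using the inner function $\tilde B(z)=\prod_k(1-\bar\mu_k z)/(z-\mu_k)$ associated with the Hardy space of $\{|z|>1\}$: the orthogonal complement of $\mathcal S_q$ in $\mathcal H_2$ is $\tilde B\,\mathcal H_2$, and the unique element of $\tilde B\,\mathcal H_2$ with a simple pole at $\lambda_j$ of residue $1$ is $\tilde B(z)/[\tilde B(\lambda_j)(z-\lambda_j)]$. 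Since $|\tilde B|\equiv 1$ on the unit circle and $|\tilde B(\lambda_j)|^{-1}=\prod_k[\lambda_j,\mu_k]_{\mathrm h}$, the $\mathcal H_2$ norm of this residual equals $\prod_k[\lambda_j,\mu_k]_{\mathrm h}/\sqrt{1-|\lambda_j|^2}$.

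Chaining Steps 2 and 3 produces the first inequality of the theorem; the second follows by pulling $\max_j\prod_k[\lambda_j,\mu_k]_{\mathrm h}$ out of the sum and bounding $1/\sqrt{1-|\lambda_j|^2}\le 1/\sqrt{1-\rho_\lambda^2}$, together with $\bar R=\sum_j\|R_j\|$. The \textbf{main obstacle} is the closed-loop handling in Step~1: because $u_t$ is correlated with the disturbance $v_t$, the normal equations $\mathbb{\bar E}[(y_t-\check G^* u_t)\check x_t^H]=0$ contain a noise cross-term $\mathbb{\bar E}[v_t\check x_t^H]$ that shifts $\check G^*$ away from the pure $\Phi_u$-weighted projection of $G$, and one must show this shift is still absorbed into the prefactor $1+\mathrm{ess\,sup}\|\Phi_u\|/\mathrm{ess\,inf}\|\Phi_u\|$. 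The Blaschke-product identity in Step~3 is classical, but it needs care to set up the inner-function framework for the exterior-of-the-unit-disk Hardy space rather than its standard interior version.
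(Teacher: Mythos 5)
Your proposal is correct in structure and reaches the stated bound, but it differs from the paper's proof in the key technical lemma. The paper proceeds in exactly your two stages: it first isolates the input-spectrum effect by comparing $\check G^*$ with the best \emph{unweighted} $\mathcal H_2$ approximant $\tilde G$ in the OBF span, obtaining the prefactor $1+\mathrm{ess}\sup_\omega\|\Phi_u(\omega)\|/\mathrm{ess}\inf_\omega\|\Phi_u(\omega)\|$ by invoking Proposition~6.3 of~\cite{van_den_hof_system_1995} rather than rederiving it (your Steps 1--2 are essentially a sketch of that proposition's proof; note that your claimed intermediate bound $\|\hat G-\check G^*\|_2\le(\mathrm{ess}\sup/\mathrm{ess}\inf)\|G-\hat G\|_2$ is most cleanly obtained via the $\Phi_u$-weighted orthogonality of $G-\check G^*$ to the span, which in fact gives the stronger factor $\sqrt{\mathrm{ess}\sup/\mathrm{ess}\inf}$). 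It then bounds $\|G_e\|_2$ with the same separable candidate $\check R_k=\sum_j\tilde r_{kj}R_j$ and triangle inequality you use. Where you genuinely diverge is the per-pole computation: you identify the orthogonal complement of the span with $\tilde B\,\mathcal H_2$ for the all-pass product $\tilde B(z)=\prod_k(1-\bar\mu_k z)/(z-\mu_k)$ and read off the residual norm $|\tilde B(\lambda_j)|^{-1}/\sqrt{1-|\lambda_j|^2}$, whereas the paper (Theorem~\ref{thm:vector_error}) writes the least-squares error as $\phi_{\lambda,\lambda}-p_{\lambda,\boldsymbol\mu}\Xi_{\boldsymbol\mu,\boldsymbol\mu}^{-1}p_{\lambda,\boldsymbol\mu}^H$ and evaluates it through the explicit inverse of a Cauchy matrix. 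The two computations give the identical closed form $\prod_k[\lambda_j,\mu_k]_{\mathrm h}/\sqrt{1-|\lambda_j|^2}$; your Blaschke/model-space route is the classical one (and closer to the frequency-domain bound in~\cite{van_den_hof_system_2005} that the paper explicitly positions itself against), while the paper's Cauchy-matrix route stays entirely in linear algebra and, as its Remark~\ref{remark:repeated_poles} notes, extends more mechanically to repeated poles.

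One caveat: the closed-loop cross-term you flag as the main obstacle --- that $u_t$ is correlated with $v_t$ so the normal equations are perturbed away from the pure $\Phi_u$-weighted projection --- is not resolved by the paper either; it is absorbed into the hypothesis of the cited Proposition~\ref{thm:bias_with_arbitrary_inputs} that $\{v_k\}$ is a martingale difference sequence. So your concern is legitimate, but declaring it unresolved does not put you behind the paper's own level of rigor on this point; to match the paper you would simply cite the same result.
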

\begin{remark}
    The $\mathcal H_2$ error bounds provided herein are consistent with the worst-case $\mathcal{H}_2$ identification and model reduction problems (see, e.g., \cite{Giarre97, Antoulas08}), and resonate with mean-square robustness analyzes in LTI systems (see, e.g., \cite{Qi2017a, Qi2017b, Bamieh2020, Jianqi2021}), and hence are compatible with system analysis and synthesis in mean-square designs. 
\end{remark}
The upper bound in Theorem~\ref{cor:system_approx_error} can be decomposed into two components. The first component is determined by the intrinsic properties of the system ($\bar{R}, \rho_{\lambda}$) and the inputs in the collected data ($\Phi_u(\omega)$), while the second component depends on $\tau(\boldsymbol{\lambda}, \boldsymbol{\mu}) = \max_{j=1, \dots, n} \prod_{k=1}^q [\lambda_j, \mu_k]_{\mathrm{h}}$, which is influenced by the choice of $\mu_k$. Motivated by this observation, in the following section, we propose a pole selection algorithm that minimizes $\tau(\boldsymbol{\lambda}, \boldsymbol{\mu})$, thereby reducing the asymptotic bias.

\section{Fundamental Lower Bound and Tsuji Pole Selection Algorithm}\label{sec:pole_allocation_algorithm}
This section focuses on \emph{robustly}\footnote{One possible approach is to \emph{adaptively} select $\mu_k$ to be as close as possible to the true system poles $\lambda_j$ based on the sampled data. However, in system identification, the values of $\lambda_j$ are generally unknown. In Section~\ref{sec:fundamental_limit}, we show that identifying the true system poles becomes exponentially difficult as the system dimension increases.} selecting the poles $\mu_k$ by minimizing the worst-case identification bias across a class of systems, specifically, those whose poles lie within the pole region\footnote{The pole region $\mathcal{D}$ is a complex subset of the open unit disk that contains all true poles~\cite[Chapter 11]{van_den_hof_system_2005}.} $\mathcal{D}$, determined by \emph{a priori} information. Additionally, we establish a fundamental lower bound on the worst-case identification bias and show that the Tsuji points asymptotically achieve this bound as the number of \res{OBF poles $q$} tends to infinity, demonstrating the optimality of the Tsuji pole selection algorithm.

The following minimax problem based on the pole region $\mathcal D$ can be used to robustly select the optimal poles $\mu_k$:
\begin{problem}[Minimax pole selection problem~\res{\cite[Chapter 11]{van_den_hof_system_2005}}]\label{prob:minimax_problem}
    \begin{equation}\label{eq:minimax_problem}
            \min_{\mu_1, \cdots, \mu_q\in\mathcal D}\max_{\lambda\in\partial\mathcal D}\prod_{k=1}^q [\lambda, \mu_k]_{\mathrm{h}},
    \end{equation}
    where $\partial \mathcal D$ denotes the boundary of $\mathcal D$.
\end{problem}
\res{Here the maximization is restricted to $\partial\mathcal D$ by the maximum-modulus principle,
applied to the function $\lambda \mapsto \prod_{k=1}^q[\lambda,\mu_k]_{\mathrm h}$.}

However, for a general pole region $\mathcal D$, this minimax problem is equivalent to a semi-infinite programming problem~\cite{semi_infinite_programming}, and is computationally challenging to solve. Instead, we propose to select the poles by solving the following maximization problem:
\begin{problem}[Tsuji pole selection problem]\label{prob:max_opt_problem}
\begin{equation}\label{eq:solve_tsuji_points_parametrize}
    \begin{aligned}
        [\eta_{q1}, \cdots, \eta_{qq}]=\arg\max_{\mu_1, \cdots, \mu_q\in\partial \mathcal{D}}\ \prod_{1\leq k<l\leq q}  [\mu_k, \mu_l]_{\mathrm{h}},
    \end{aligned}
\end{equation}
where $\partial\mathcal{D}$ denotes the boundary of $\mathcal{D}$.
\end{problem}
The solutions $\eta_{q1}, \cdots, \eta_{qq}$ are called the $q$-th Tsuji points of the region $\mathcal D$ introduced in Definition~\ref{def:transfinite_diameter_tsuji_point}. The following theorem demonstrates the performance of the Tsuji points:
\begin{theorem}\label{thm:tsuji_asymptotic_optimal}
    Let $\eta_{q1}, \cdots, \eta_{qq}$ denote the $q$-th Tsuji points of $\mathcal{D}$. Then, by choosing $\mu_k=\eta_{qk}$ for Problem~\ref{prob:minimax_problem}, the exponential decay rate of its objective function asymptotically approaches the hyperbolic Chebyshev constant $\tau(\mathcal{D})$ as $q\to\infty$, i.e.,
    \begin{equation}
        \lim_{q\to\infty}\max_{z\in\mathcal{D}} \left(\prod_{k=1}^q [z, \eta_{qk}]_{\mathrm{h}}\right)^{1/q}=\tau(\mathcal{D}).
    \end{equation}
\end{theorem}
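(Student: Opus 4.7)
The plan is to sandwich the quantity $T_q^* \triangleq \max_{z \in \mathcal{D}}\bigl(\prod_{k=1}^q [z,\eta_{qk}]_{\mathrm{h}}\bigr)^{1/q}$ between matching asymptotic bounds. By Proposition~\ref{prop:tsuji_chebyshev_relation}, $\tau(\mathcal{D})=d(\mathcal{D})$, so it suffices to show $T_q^* \to d(\mathcal{D})$.

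For the lower bound $\liminf_{q\to\infty} T_q^* \geq \tau(\mathcal{D})$, I would observe that $(\eta_{q1},\ldots,\eta_{qq})$ constitutes one admissible choice in the minimax defining $\tau_q(\mathcal{D})$ in Definition~\ref{def:chebyshev_constant}. Consequently $T_q^* \geq \tau_q(\mathcal{D})$ for every $q$, and the claim follows from $\tau_q(\mathcal{D}) \to \tau(\mathcal{D})$.

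For the upper bound $\limsup_{q\to\infty} T_q^* \leq d(\mathcal{D})$, I would invoke a Fekete-type inequality. Let $M_q \triangleq d_q(\mathcal{D})^{\binom{q}{2}}$ denote the maximum in Definition~\ref{def:transfinite_diameter_tsuji_point}. For any $z\in\mathcal{D}$, appending $z$ to the Tsuji configuration yields an admissible $(q+1)$-point configuration whose hyperbolic product equals $M_q \cdot \prod_{k=1}^q [z,\eta_{qk}]_{\mathrm{h}}$ by the optimality of the $q$-th Tsuji points; since this product is bounded above by $M_{q+1}$, taking the $q$-th root gives
\[
T_q^* \leq \left(\frac{M_{q+1}}{M_q}\right)^{1/q} = \frac{d_{q+1}^{(q+1)/2}}{d_q^{(q-1)/2}}.
\]

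The main obstacle is to clean up this last expression into something that manifestly tends to $d(\mathcal{D})$. My plan is to first establish the monotonicity $d_q \geq d_{q+1}$ via a symmetrization argument: for each $i$, deleting $\eta_{q+1,i}$ from the $(q+1)$-th Tsuji configuration yields an admissible $q$-point configuration, hence $\prod_{k \neq i}[\eta_{q+1,i},\eta_{q+1,k}]_{\mathrm{h}} \geq M_{q+1}/M_q$; taking the product over $i$ and noting that each hyperbolic distance $[\eta_{q+1,j},\eta_{q+1,k}]_{\mathrm{h}}$ is counted exactly twice yields $M_{q+1}^2 \geq (M_{q+1}/M_q)^{q+1}$, which after converting from $M_q$ to $d_q$ collapses to $d_q \geq d_{q+1}$. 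Applying this monotonicity to the bound above, the mismatched exponents cancel,
\[
T_q^* \leq \frac{d_{q+1}^{(q+1)/2}}{d_{q+1}^{(q-1)/2}} = d_{q+1},
\]
which tends to $d(\mathcal{D})$ by Definition~\ref{def:transfinite_diameter_tsuji_point}. Combining with the lower bound yields $\lim_{q\to\infty} T_q^* = d(\mathcal{D}) = \tau(\mathcal{D})$, completing the proof.
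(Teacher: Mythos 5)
Your proof is correct, and your lower bound coincides with the paper's: the Tsuji configuration is one admissible choice in the minimax defining $\tau_q(\mathcal D)$, so $T_q^*\ge\tau_q(\mathcal D)\to\tau(\mathcal D)$. Your upper bound, however, takes a genuinely different route. The paper stays within a single value of $q$ and exploits an exchange/equioscillation property: for each $l$, the leave-one-out product $\prod_{k\neq l}[z,\eta_{qk}]_{\mathrm h}$ is maximized over $z\in\mathcal D$ at $z=\eta_{ql}$ (otherwise swapping $\eta_{ql}$ for the maximizer would strictly increase the pairwise product, contradicting extremality); multiplying these $q$ maxima gives $\prod_{1\le k<l\le q}[\eta_{qk},\eta_{ql}]_{\mathrm h}^2$, and a pigeonhole step selects an index $\ell$ whose leave-one-out maximum is at most the geometric mean, yielding $T_q^*\le d_q(\mathcal D)^{(q-1)/q}$. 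You instead compare consecutive extremal configurations Fekete-style: appending an arbitrary $z$ gives $T_q^{*\,q}\le M_{q+1}/M_q$, and deleting points from the $(q+1)$-st configuration gives the monotonicity $d_{q+1}\le d_q$, which together collapse to the cleaner bound $T_q^*\le d_{q+1}(\mathcal D)$. Both arguments hinge on the extremality of the Tsuji points and both limits equal $d(\mathcal D)=\tau(\mathcal D)$ via Proposition~\ref{prop:tsuji_chebyshev_relation}; your route has the side benefit of re-deriving the monotonicity of $d_q$ that underlies the existence of the limit in Definition~\ref{def:transfinite_diameter_tsuji_point}, while the paper's avoids any comparison across different $q$. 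The only caveat in your version is the implicit division by $M_q$ and by $d_q^{(q-1)/2}$, which requires $d_q(\mathcal D)>0$; this is automatic when $\mathcal D$ is a continuum as assumed in the paper, but you should note it (the degenerate case where $\mathcal D$ is a single point is handled trivially since both sides vanish).
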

\begin{proof}
    Suppose $\eta_{q1}, \cdots, \eta_{qq}$ are the $q$-th Tsuji points of the set $\mathcal{D}$, i.e.,
    \begin{equation*}
        \max_{\mu_1, \cdots, \mu_q\in\mathcal{D}}\prod_{1\leq k<l\leq q}[\mu_k, \mu_l]_{\mathrm{h}}=\prod_{1\leq k<l\leq q}[\eta_{qk}, \eta_{ql}]_{\mathrm{h}}.
    \end{equation*}
    Then, for each $l=1, \cdots, q$, one can verify that
    \[\phi_{-l}\triangleq\max_{z\in\mathcal{D}}\prod_{k=1, \cdots, q, k\neq l}[z, \eta_{qk}]_{\mathrm{h}}=\prod_{k=1, \cdots, q, k\neq l}[\eta_{ql}, \eta_{qk}]_{\mathrm{h}}.\]
    Multiplying all $\phi_{-l}$ together, we can get that
    \begin{equation*}
        \prod_{l=1}^q \phi_{-l}=\prod_{1\leq k<l\leq q}\left([\eta_{qk}, \eta_{ql}]_{\mathrm{h}}\right)^2.
    \end{equation*}
    Therefore, there exists an $\ell$ with $1\leq \ell\leq q$, such that
    \begin{equation*}
        \phi_{-\ell}\leq \left(\prod_{l=1}^q \phi_{-l}\right)^{1/q}=\prod_{1\leq k<l\leq q}([\eta_{qk}, \eta_{ql}]_{\mathrm{h}})^{2/q}.
    \end{equation*}
    Consequently,
    \begin{align}
        \max_{z\in\mathcal{D}}\left(\prod_{k=1}^q [z, \eta_{qk}]_{\mathrm{h}}\right)^{1/q}&< \left(\phi_{-\ell}\right)^{1/q}\nonumber \\
        &\leq \prod_{1\leq k<l\leq q}([\eta_{qk}, \eta_{ql}]_{\mathrm{h}})^{2/q^2},\nonumber
    \end{align}
    where the first inequality is because $[z, \eta_{q\ell}]_{\mathrm{h}}<1$ for all $z, \eta_{q\ell}\in\mathbb{D}$. Let $q\to\infty$, we have that
    \begin{equation}
        \begin{aligned}
            &\limsup_{q\to\infty}\max_{z\in\mathcal{D}}\left(\prod_{k=1}^q [z, \eta_{qk}]_{\mathrm{h}}\right)^{1/q} \\
            &\leq \limsup_{q\to\infty}\max_{\mu_1, \cdots, \mu_q\in\mathcal{D}}\left[\prod_{1\leq k<l\leq q}[\mu_k, \mu_l]_{\mathrm{h}}^{2/(q(q-1))}\right]^{1-1/q} \\
            &=\tau(\mathcal{D})=\lim_{q\to\infty}\min_{\mu_1, \cdots, \mu_q}\max_{z\in\mathcal{D}}\left(\prod_{k=1}^q [z, \mu_k]_{\mathrm{h}}\right)^{1/q} \\
            &\leq \liminf_{q\to\infty}\max_{z\in\mathcal{D}}\left(\prod_{k=1}^q [z, \eta_{qk}]_{\mathrm{h}}\right)^{1/q}.\nonumber
        \end{aligned}
    \end{equation}
    Hence, we conclude that
    \begin{equation}
        \lim_{q\to\infty}\max_{z\in\mathcal{D}}\left(\prod_{k=1}^q [z, \eta_{qk}]_{\mathrm{h}}\right)^{1/q}=\tau(\mathcal{D}).\nonumber
    \end{equation}
\end{proof}

Finally, we leverage the results above to evaluate the worst-case identification bias for a group of systems consistent with \emph{a priori} information, using the same set of bases determined by the proposed pole selection algorithm. Let $\mathscr G$ denote this group of target systems whose poles lie within the region $\mathcal D$ and whose \res{coefficient magnitude} satisfies $\sum_{j=1}^n\|R_j\|\leq \bar R$. Since we expect an exponential decay as $q$ increases, we focus on the following term:

\begin{equation}\label{eq:fundamental_limit_pole}
	\liminf_{q\rightarrow \infty}\max_{G\in\mathscr{G}}\min_{\check R_1, \cdots, \check R_q}\left\|G-\sum_{k=1}^q \frac{\check R_{k}}{z-\mu_{k}}\right\|_2^{1/q}.
\end{equation}
\begin{theorem}[Fundamental limit on the worst-case approximation bias]\label{thm:system_approximation_error}
	For any sequence of the selected poles $\{\mu_{k}\}$,
    \begin{equation}\label{eq:system_approximation_error_lim}
			\liminf_{q\to\infty}\max_{G\in\mathscr{G}}\min_{\check R_1, \cdots, \check R_q}\left\|G-\sum_{k=1}^q \frac{\check R_{k}}{z-\mu_{k}}\right\|_2^{1/q}\geq \tau(\mathcal D),
   \end{equation}
    where $\tau(\cdot)$ is the hyperbolic Chebyshev constant introduced in Definition~\ref{def:chebyshev_constant}. 
    
    On the other hand, when $\{\mu_k\}$ are chosen as the $q$-th Tsuji points $\eta_{q1}, \cdots, \eta_{qq}$ of $\mathcal D$, the following equality holds:
    \begin{align}
        \lim_{q\to\infty}\max_{G\in\mathscr G}\min_{\check R_1, \cdots, \check R_q}\left\|G-\sum_{k=1}^q \frac{\check R_{k}}{z-\eta_{qk}}\right\|_2^{1/q}=\tau(\mathcal D).
    \end{align}
\end{theorem}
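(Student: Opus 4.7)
I would handle the lower and upper bounds separately; both rest on the same classical $\mathcal H_2$-projection identity, which I would prove once. The identity states that for any $\lambda^\star\in\mathbb D$ and any matrix $R\in\mathbb C^{p\times m}$,
\[
    \min_{\check R_1,\ldots,\check R_q}\Bigl\|\frac{R}{z-\lambda^\star}-\sum_{k=1}^q\frac{\check R_k}{z-\mu_k}\Bigr\|_2 = \frac{\|R\|}{\sqrt{1-|\lambda^\star|^2}}\prod_{k=1}^q[\lambda^\star,\mu_k]_{\mathrm h}.
\]
The proof of this identity proceeds via the finite Blaschke product $B_q(z)=\prod_k(1-\bar\mu_k z)/(z-\mu_k)$: the orthogonal complement of $\mathrm{span}\{1/(z-\mu_k)\}$ in $\mathcal H_2$ is $B_q\cdot\mathcal H_2$, and evaluating the residual of $R/(z-\lambda^\star)$ against the reproducing kernel at $\lambda^\star$ yields $|B_q(\lambda^\star)|\cdot\|R\|/\sqrt{1-|\lambda^\star|^2}$, where $|B_q(\lambda^\star)|=\prod_k[\lambda^\star,\mu_k]_{\mathrm h}$. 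Matrix-valued residues are handled column-wise.

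\textbf{Lower bound.} For every $\lambda^\star\in\mathcal D$ and every $R\in\mathbb C^{p\times m}$ with $\|R\|=\bar R$, the single-pole system $R/(z-\lambda^\star)$ lies in $\mathscr G$. Fixing $\rho<1$ with $\mathcal D\subset\bar{\mathbb D}_\rho$ (guaranteed by compactness of $\mathcal D$), applying the identity and maximizing over $\lambda^\star$ gives
\[
    \max_{G\in\mathscr G}\min_{\check R}\Bigl\|G-\sum_k\frac{\check R_k}{z-\mu_k}\Bigr\|_2 \geq \frac{\bar R}{\sqrt{1-\rho^2}}\max_{z\in\mathcal D}\prod_{k=1}^q[z,\mu_k]_{\mathrm h}.
\]
Raising to the $1/q$-th power, the constant prefactor tends to $1$; by Definition~\ref{def:chebyshev_constant}, $\bigl(\max_z\prod_k[z,\mu_k]_{\mathrm h}\bigr)^{1/q}\geq\tau_q(\mathcal D)\to\tau(\mathcal D)$, which establishes the lower bound.

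\textbf{Upper bound at Tsuji points, and main obstacle.} Decomposing $G(z)=\sum_j R_j/(z-\lambda_j)$ and applying the triangle inequality pole-by-pole together with the identity on each summand, combined with $\sum_j\|R_j\|\leq\bar R$ and $\mathcal D\subset\bar{\mathbb D}_\rho$, gives
\[
    \min_{\check R}\Bigl\|G-\sum_k\frac{\check R_k}{z-\eta_{qk}}\Bigr\|_2 \leq \frac{\bar R}{\sqrt{1-\rho^2}}\max_{z\in\mathcal D}\prod_{k=1}^q[z,\eta_{qk}]_{\mathrm h}.
\]
Raising to $1/q$ and applying Theorem~\ref{thm:tsuji_asymptotic_optimal} yields $\limsup\leq\tau(\mathcal D)$; combined with the lower bound, the limit equals $\tau(\mathcal D)$. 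The only non-routine step is the projection identity, which I expect to be the main obstacle. It should follow from the Blaschke model-space decomposition, but care is needed in the system-theoretic form of $\mathcal H_2$ (causal discrete-time transfer functions analytic at infinity), where $B_q$ takes the form $\prod_k(1-\bar\mu_k z)/(z-\mu_k)$ rather than the standard Hardy-space expression, and in the extension from scalar to matrix-valued residues.
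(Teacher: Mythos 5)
Your proposal is correct and follows the same overall architecture as the paper's proof: single-pole extremal systems $R/(z-\lambda^\star)$ with $\|R\|=\bar R$ realize the worst case and reduce the lower bound to the definition of $\tau_q(\mathcal D)$, while the upper bound at the Tsuji points comes from a pole-by-pole triangle inequality (the paper's Lemma~\ref{thm:system_approx_error}) followed by Theorem~\ref{thm:tsuji_asymptotic_optimal}, with all $q$-independent prefactors washed out by the $1/q$-th root. The one genuine divergence is how you establish the projection identity, which you correctly flag as the crux. The paper proves it (Theorem~\ref{thm:vector_error}) by writing the least-squares residual as a Schur complement of the Gram matrix $\check\Xi$, factoring $\check\Xi$ as a diagonal matrix times a Cauchy matrix, and invoking the explicit Cauchy inverse; matrix residues are then handled by the restriction $\check R_k=\sum_j\tilde r_{kj}R_j$. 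Your Blaschke-product/model-space route ($\mathrm{span}\{1/(z-\mu_k)\}^\perp=B_q\mathcal H_2$, then evaluate against the reproducing kernel at $\lambda^\star$) is the classical argument behind equation~(4.48) of van den Hof et al., which the paper itself cites as the known antecedent; it is shorter and more conceptual, whereas the Cauchy-matrix computation is self-contained and, as the paper notes in Remark~\ref{remark:repeated_poles}, extends mechanically to repeated poles by differentiating the Gram matrix. One harmless slip: your displayed lower bound with prefactor $\bar R/\sqrt{1-\rho^2}$ does not follow from maximizing over $\lambda^\star$ (the maximizer of $\prod_k[z,\mu_k]_{\mathrm h}$ need not lie on $|z|=\rho$, so $1/\sqrt{1-|\lambda^\star|^2}$ can only be bounded below by $1$, not by $1/\sqrt{1-\rho^2}$); replacing that prefactor by $\bar R$ repairs the step and changes nothing in the limit.
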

The proof of the theorem is reported in Appendix~\ref{append:system_approx_error}.

Theorem~\ref{thm:system_approximation_error} establishes a fundamental limit, proving that \emph{no} pole selection algorithm can achieve a worst-case approximation bias with a decay rate faster than $O((\tau(\mathcal D)-\epsilon)^q)$ for any $\epsilon>0$. 

On the other hand, the theorem also proves that the Tsuji points, which are the solution to the proposed maximization problem~\eqref{eq:solve_tsuji_points_parametrize}, asymptotically achieve this limit. Consequently, our Tsuji pole selection approach is asymptotically optimal while avoiding the computationally expensive minimax formulation.

\subsection{Initialization Strategy}
In the preceding discussion, we have simplified a minimax problem to a maximization problem, whose solution is asymptotically optimal. However, Problem~\ref{prob:max_opt_problem} remains non-convex in general, and thus the initial choice of $\mu_k$ may affect the optimization result. To effectively solve Problem~\ref{prob:max_opt_problem}, we propose an \emph{initialization strategy} for the optimization problem. 

Let $f$ denote a conformal mapping that transforms the annulus $\mathbb{D}\backslash\mathbb{\bar D}_{\tau(\mathcal{D})}=\{z\mid \tau(\mathcal{D})<|z|<1\}$ to $\mathbb{D}\backslash\mathcal{D}$, such that $|z|=\tau(\mathcal{D})$ corresponds to $\partial\mathcal{D}$. The existence and computation of this mapping can be found in~\cite{trefethen_numerical_2020}. When $\mathcal{D}$ is a real interval $[-\rho, \rho], \rho<1$, the analytical solution of its conformal mapping is provided in Appendix~\ref{append:tsuji_points}. 

The proposed \emph{initialization strategy} to solve Problem~\ref{prob:max_opt_problem} is summarized as follows:
\begin{enumerate}
    \item \textbf{Find the conformal mapping} on the complex plane $f:\mathbb{D}\to\mathbb{D}$ that maps the annulus $\mathbb{D}\backslash\mathbb{\bar D}_{\tau(\mathcal{D})}$ to the open set $\mathbb{D}\backslash\mathcal{D}$.
    \item \textbf{Uniformly sample $q$ points} $\tau(\mathcal{D})e^{2k\pi i/q}, k=1, \cdots, q$ on the inner circle $\mathbb{D}_{\tau(\mathcal{D})}=\{z\in\mathbb{C}\mid |z|=\tau(\mathcal{D})\}$.
    \item \textbf{Map the sampled points} to the pole region $\mathcal{D}$ via the conformal mapping $f(\cdot)$, i.e., $\tilde\eta_{qk}=f(\tau(\mathcal{D})e^{2k\pi i/q}), k=1, \cdots, q$.
\end{enumerate}
The resulting $\tilde \eta_{qk}$ are the initializations to the Tsuji points. 

The initialization procedure when $\mathcal D$ is an ellipse inside the unit circle is visualized in Fig.~\ref{fig:tsuji_complex}. The rationale behind this strategy is that $\tilde\eta_{q1}, \cdots, \tilde\eta_{qq}$ asymptotically converge to the Tsuji points when $\partial\mathcal{D}$ is an analytic Jordan curve, as shown by the following result:

\begin{figure}
    \centering
    \includegraphics[width=0.45\textwidth]{./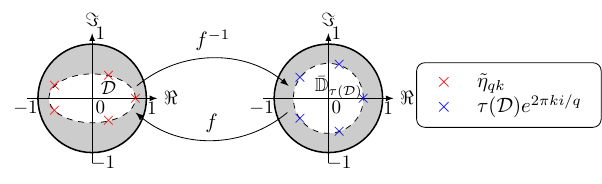}
    \caption{Visualization of the introduced initialization strategy of the Tsuji points given the pole region $\mathcal D$. $f$ in the figure denotes the conformal mapping from the annulus $\mathbb D\backslash \mathbb{\bar D}_{\tau(\mathcal D)}$ to $\mathbb D\backslash\mathcal D$ (the gray regions).}
    \label{fig:tsuji_complex}
\end{figure}

\begin{proposition}[Distribution of Tsuji points; \res{see Theorem~2 in~\cite{menke_distribution_1985} and equation~(3) in}~\cite{stiemer_approximation_2005}]\label{thm:distribution_of_tsuji_points}
    Denote $\eta_{q1}, \cdots, \eta_{qq}$ as the $q$-th Tsuji points of $\mathcal{D}$, and let $\nu_k$ be points such that $f(\tau(\mathcal{D})e^{i\nu_{k}})=\eta_{qk}, k=1, \cdots, q$, then
    \begin{equation}
        \left|\nu_{k}-\frac{2\pi k}{q}\right|\leq L\frac{(\log q)^{3/2}}{q}, k=1, \cdots, q,
    \end{equation}
    where $L>0$ is independent of $q$ and $k$.
\end{proposition}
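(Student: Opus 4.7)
The plan is to derive the near-equidistribution of the Tsuji points on the level set $\{|z|=\tau(\mathcal{D})\}$ via logarithmic potential theory in the pseudohyperbolic metric, and then transport this equidistribution to $\partial\mathcal{D}$ through the conformal map $f$. First, I would reformulate the $q$-th Tsuji points as the maximizers of the discrete hyperbolic energy $E_q(\mu_1,\ldots,\mu_q)=\sum_{1\le j<k\le q}\log[\mu_j,\mu_k]_{\mathrm h}^{-1}$. By Definition~\ref{def:transfinite_diameter_tsuji_point} and Proposition~\ref{prop:tsuji_chebyshev_relation}, the normalized maximum $\binom{q}{2}^{-1}E_q(\eta_{q1},\ldots,\eta_{qq})$ tends to $-\log\tau(\mathcal{D})=-\log d(\mathcal{D})$, the Robin constant of the hyperbolic equilibrium problem on $\mathcal{D}$. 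A standard lower semicontinuity argument in potential theory then shows that any weak-$\ast$ accumulation point of $q^{-1}\sum_{k}\delta_{\eta_{qk}}$ is the unique hyperbolic equilibrium measure $\omega_{\mathcal{D}}$ supported on $\partial\mathcal{D}$, yielding a qualitative equidistribution.

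Second, I would identify $\omega_{\mathcal{D}}$ explicitly through the conformal mapping $f$ introduced before the proposition. Because $f$ sends the circle $\{|z|=\tau(\mathcal{D})\}$ onto $\partial\mathcal{D}$ and realizes a holomorphic bijection of the annulus $\mathbb{D}\setminus\bar{\mathbb{D}}_{\tau(\mathcal{D})}$ onto $\mathbb{D}\setminus\mathcal{D}$ fixing the outer boundary, its inverse pulls $\omega_{\mathcal{D}}$ back to the rotation-invariant, hence uniform, probability measure $d\nu/(2\pi)$ on the inner circle. The proposition is therefore a quantitative version of the statement that the angles $\nu_k=\arg f^{-1}(\eta_{qk})$ are almost equispaced around $[0,2\pi)$.

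Third, to obtain the explicit rate $L(\log q)^{3/2}/q$, I would invoke a quantitative discrepancy bound of Erd\H{o}s--Tur\'an or Ganelius type. After pulling the Tsuji points back to the annular boundary, the key inequality in the cited works of Menke~\cite{menke_distribution_1985} and Stiemer~\cite{stiemer_approximation_2005} bounds the $L^\infty$ discrepancy of $\{\nu_1,\ldots,\nu_q\}$ from uniform measure in terms of the energy defect $-\log\tau(\mathcal{D})-\binom{q}{2}^{-1}E_q(\eta_{q1},\ldots,\eta_{qq})$. This defect is in turn controlled by $O(\log q/q)$ using sharp asymptotics of the hyperbolic transfinite diameter under the assumption that $\partial\mathcal{D}$ is an analytic Jordan curve, so that $f$ extends analytically across the inner boundary. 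Combining the two estimates produces the stated $(\log q)^{3/2}/q$ rate for each individual angle $\nu_k$.

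The hard part is the last step, namely converting the near-optimality of the discrete energy, which is a scalar statement, into a \emph{pointwise} spacing bound on every $\nu_k$. The Erd\H{o}s--Tur\'an type inequality naturally delivers only the cumulative discrepancy $\sup_\theta\bigl|q^{-1}\#\{k:\nu_k\le\theta\}-\theta/(2\pi)\bigr|$; promoting this to a uniform bound on the gaps $|\nu_k-2\pi k/q|$ requires the analyticity of $\partial\mathcal{D}$ and fine regularity of $f$ to exclude local clustering of Tsuji points, together with an enumeration argument that matches the sorted $\nu_k$ with the arithmetic progression $2\pi k/q$. This reordering and exclusion step is precisely what the construction of Menke and the refinement of Stiemer handle, and I would adopt their argument to close the proof.
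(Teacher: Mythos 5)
The paper does not prove this proposition at all: it is imported directly from Theorem~2 of \cite{menke_distribution_1985} and equation~(3) of \cite{stiemer_approximation_2005}, so there is no in-paper argument to compare yours against. Your sketch correctly identifies the classical framework --- Tsuji points as maximizers of a discrete hyperbolic energy, weak-$\ast$ convergence of the counting measures to the hyperbolic equilibrium measure on $\partial\mathcal{D}$, conformal transplantation of that measure to the uniform measure on the circle $\{|z|=\tau(\mathcal{D})\}$, and the essential role of $\partial\mathcal{D}$ being an analytic Jordan curve so that $f$ continues analytically across the inner boundary of the annulus. These are indeed the ingredients of the cited proofs.

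However, the concrete quantitative route in your third step cannot deliver the stated rate. Discrepancy inequalities of Erd\H{o}s--Tur\'an or Ganelius type convert an energy defect $\varepsilon_q$ into a discrepancy bound of order $\sqrt{\varepsilon_q}$; with the energy defect of the extremal configuration being $O(\log q/q)$, this yields only an $O(\sqrt{\log q/q})$ discrepancy, far weaker than the claimed pointwise bound $L(\log q)^{3/2}/q$. Menke's actual argument is not an energy-defect-plus-discrepancy argument: it is a local variational one, perturbing individual extremal points, exploiting the analytic extension of the conformal map, and comparing neighboring gaps directly, which is what produces the $q^{-1}$ (up to logarithmic factors) spacing. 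You acknowledge the difficulty of promoting a cumulative discrepancy to a pointwise gap bound and resolve it by ``adopting their argument,'' but that deferral carries the entire quantitative content of the proposition, so the proposal is in effect a restatement of the citation rather than an independent derivation. Since the paper likewise only cites the result, this is acceptable for its purposes, but your steps 1--3 should not be presented as though they establish the $(\log q)^{3/2}/q$ bound.
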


The proposed pole selection algorithm with the initialization strategy is summarized as follows. Suppose we have found the explicit form of the conformal mapping $f$.
\begin{enumerate}
    \item \textbf{Initialize} $\nu_k=2k\pi /q$.
    \item \textbf{Solve the following maximization problem}:
    \begin{equation}
        \max_{\nu_1, \cdots, \nu_q\in[0, 2\pi]}\prod_{1\leq k<l\leq q}[f(\tau(\mathcal D)e^{i\nu_k}), f(\tau(\mathcal D)e^{i\nu_l})]_{\mathrm h},
    \end{equation}
    where the optimal solutions are denoted as $\nu_1^*, \cdots, \nu_q^*$.
    \item The \textbf{Tsuji points} are $\eta_{qk}=f(\tau(\mathcal D)e^{i\nu_k^*})$.
\end{enumerate}

\section{Hardness of Precise Pole Identification}\label{sec:fundamental_limit}
The previous section proposes a \emph{robust} pole selection algorithm by minimizing the worst-case identification bias across a group of systems. However, for identifying a single system, a more direct approach is to \emph{adaptively} select the poles $\mu_k$ to approximate the true system poles $\lambda_j$ as closely as possible based on sampled data. 

In this section, we show that for an LTI system whose transfer function has a denominator of degree $n$, the number of samples required to distinguish the true system from the \emph{robust solution} in the previous section grows exponentially with $n$. Furthermore, we extend this result to a state-space formulation, proving that the sample complexity for distinguishing the true state-space model of order $n$ from the \emph{robust solution} of order $n$ grows exponentially with $\underline{n} \triangleq \left\lfloor \frac{n}{\min(p, m)} \right\rfloor$. Consequently, identifying the true system poles $\lambda_j$ becomes exponentially challenging as the system's dimension increases.

Specifically, suppose the true system has the dynamics~\eqref{eq:true_system}. Besides Assumption~\ref{assump:system_stability},~\ref{assump:input} and~\ref{assump:bias}, for technical simplicity, we additionally impose 
\begin{assumption}\label{assump:fundamental_limit}
    \begin{itemize}
        \item The noise $\{v_t\}$ is i.i.d. and Gaussian distributed, i.e., $v_t\sim\mathcal{N}(0, \mathcal{R})$ with $\mathcal{R}\succ 0$.
        \item The samples are collected starting from time $0$, and the system inputs from time $-\infty$ to $-1$ are kept at $0$.
        \item The system input is a stationary process and its power spectral density is bounded, i.e., $\mathrm{ess}\sup_{\omega}\|\Phi_u(\omega)\|<\infty$.
    \end{itemize}
\end{assumption}

Then, the transfer function $G(z)$ can be similarly decomposed as in~\eqref{eq:tf_decomposition}. We next use the robust solution in the previous section to construct a surrogate system $\check G$ as follows:
\begin{itemize}
    \item The poles $\mu_1, \cdots, \mu_{n}$ are fixed as the minimizer of Problem~\ref{prob:minimax_problem} in Section~\ref{sec:pole_allocation_algorithm};
    \item The parameters $\check R_1, \cdots, \check R_{n}$ are chosen as the optimal solution of Problem~\ref{prob:system_identification}.
\end{itemize}
Here, we keep the dimension of the surrogate system the same as the true system for the fairness of comparison. The constructed surrogate system $\check G$ can be written as
\begin{equation}\label{eq:fundamental_limit_surrogate_system}
    \check G(z)=\sum_{k=1}^{n} \frac{\check R_k}{z-\mu_k}.
\end{equation}
In what follows, we establish the aforementioned sample complexity bound via distinguishing the distributions of $y_1, \cdots, y_N, u_0, \cdots, u_{N-1}$ under the following two hypotheses:
\begin{equation}\label{eq:hypothesis}
    \begin{aligned}
        &\mathcal{H}_0: \{y_t, u_{t-1}\}_{t=1}^N\text{ are from the true system }G, \\
        &\mathcal{H}_1:\{y_t, u_{t-1}\}_{t=1}^N\text{ are from the surrogate system }\check G.
    \end{aligned}
\end{equation}
Let $\mathbb{P}_0$ and $\mathbb{P}_1$ denote the conditional distribution of $\{y_t\}_{t=1}^N$ given $\{u_t\}_{t=0}^{N-1}$ in the two hypotheses $\mathcal{H}_0$ and $\mathcal{H}_1$ respectively. Here, we use the KL divergence between the distributions of the system outputs $y_t$ given the same inputs $u_t$ as a metric on the difficulty of distinguish the true system $G$ from the surrogate system $\check G$. The following theorem shows that the sample complexity for distinguishing the two systems, i.e., reach a constant KL divergence requirement, grows exponentially with $n$. The proof of the theorem is reported in Appendix~\ref{append:fundamental_limit}.
\begin{theorem}\label{thm:fundamental_limit}
    The expected\footnote{The expectation is taken with respect to the persistent exciting input $u_t$.} KL divergence between the two distributions in~\eqref{eq:hypothesis} satisfies
    \begin{equation}
       \mathbb E D_{\mathrm{KL}}(\mathbb{P}_0\|\mathbb{P}_1)\leq \frac{\bar R^2\|\mathcal{R}^{-1}\|\mathrm{ess}\sup_{\omega}\|\Phi_u(\omega)\|}{2(1-\rho_{\lambda}^2)}\tau_n(\mathcal{D})^{2n}N,
    \end{equation}
    where $\rho_\lambda$ is the spectral radius of the true system $G$, $\mathcal{R}$ is the covariance matrix of the noise in Assumption~\ref{assump:fundamental_limit}, $\bar R=\sum_{j=1}^n \|R_j\|$ is the modified system energy with slight abuse of notations and $\tau_n(\mathcal{D})$ is the finite hyperbolic Chebyshev constant of the region $\mathcal{D}$ defined in Definition~\ref{def:chebyshev_constant}.

    Moreover, given a constant $\delta>0$, in order to distinguish the two hypotheses in~\eqref{eq:hypothesis} with the KL divergence no smaller than $\delta$, $N$ satisfies
    \[N\geq \frac{2\delta(1-\rho_\lambda^2)}{\bar R^2\|\mathcal{R}^{-1}\|\mathrm{ess}\sup_{\omega}\|\Phi_u(\omega)\|}\tau_n(\mathcal{D})^{-2n}.\]
    When the system dimension $n$ tends to infinity,
    \[\liminf_{n\to\infty}N^{1/(2n)}\geq \tau(\mathcal D)^{-1}.\]
\end{theorem}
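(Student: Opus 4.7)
The plan is to reduce the KL divergence to a deterministic quadratic form, relate it to the $\mathcal{H}_2$ distance $\|G-\check G\|_2$, and finally apply the Tsuji/Chebyshev machinery of Section~\ref{sec:pole_allocation_algorithm}. Since the noise $v_t$ is i.i.d.\ $\mathcal{N}(0,\mathcal{R})$ and the inputs are identical under both hypotheses, conditioned on $u_{0:N-1}$ the distributions $\mathbb{P}_0$ and $\mathbb{P}_1$ are jointly Gaussian with common covariance $I_N\otimes\mathcal{R}$ and means that differ by $\Delta(z)u_{t-1}$, where $\Delta(z)\triangleq G(z)-\check G(z)$. A direct Gaussian KL calculation therefore gives
\[
D_{\mathrm{KL}}(\mathbb{P}_0\|\mathbb{P}_1) \;=\; \tfrac{1}{2}\sum_{t=1}^N (\Delta(z)u_{t-1})^H\mathcal{R}^{-1}(\Delta(z)u_{t-1}).
\]

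Next, I would take expectation over the stationary input. Using the operator-norm bound on $\mathcal{R}^{-1}$ together with the spectral representation of the autocorrelation,
\[
\mathbb{E}\|\Delta(z)u_{t-1}\|^2 \;=\; \tfrac{1}{2\pi}\int_{-\pi}^{\pi}\operatorname{tr}\bigl(\Delta(e^{i\omega})\Phi_u(\omega)\Delta(e^{i\omega})^H\bigr)\,d\omega \;\leq\; \mathrm{ess}\sup_{\omega}\|\Phi_u(\omega)\|\cdot\|\Delta\|_2^2 .
\]
Summing over $t=1,\ldots,N$ and combining with the previous display reduces the first assertion of the theorem to bounding $\|G-\check G\|_2^2$ by $\bar R^2(1-\rho_\lambda^2)^{-1}\tau_n(\mathcal{D})^{2n}$.

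The core geometric step is exactly this bound. Because $\check G$ is by construction the $\mathcal{H}_2$-optimal approximation in $\mathrm{span}\{1/(z-\mu_k)\}_{k=1}^n$ (Problem~\ref{prob:system_identification}), any feasible candidate expansion upper-bounds $\|G-\check G\|_2$. I would partial-fraction $G$ as in~\eqref{eq:tf_decomposition} and approximate each simple pole $R_j/(z-\lambda_j)$ by a suitable element of the span; the classical Blaschke-residual identity gives a residual of norm $\|R_j\|/\sqrt{1-|\lambda_j|^2}\cdot |B_n(\lambda_j)|$, where $B_n(\lambda)=\prod_{k=1}^n (\lambda-\mu_k)/(1-\bar\mu_k\lambda)$. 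The triangle inequality over $j$ then yields
\[
\|G-\check G\|_2 \;\leq\; \frac{\bar R}{\sqrt{1-\rho_\lambda^2}}\max_{j=1,\ldots,n}|B_n(\lambda_j)|.
\]
Since $\lambda\mapsto B_n(\lambda)$ is analytic on $\mathbb{D}$, the maximum-modulus principle pushes the maximum from $\lambda_j\in\mathcal{D}$ onto $\partial\mathcal{D}$, and choosing $\{\mu_k\}$ as the minimizers of Problem~\ref{prob:minimax_problem} (with $q=n$) forces this boundary maximum to equal $\tau_n(\mathcal{D})^n$.

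Chaining the three bounds yields the stated expected-KL inequality. The sample complexity lower bound follows by imposing $\mathbb{E}D_{\mathrm{KL}}\geq\delta$ and solving for $N$; the $n\to\infty$ statement then uses Definition~\ref{def:chebyshev_constant}, namely $\tau_n(\mathcal{D})\to\tau(\mathcal{D})$, together with the fact that the $n$-independent prefactor vanishes after the $1/(2n)$ power is taken. The main obstacle is the third paragraph: obtaining the clean $\tau_n(\mathcal{D})^n$ decay \emph{without} the input-spectrum ratio factor that appeared in Theorem~\ref{cor:system_approx_error}. This sharpening is permissible because $\check G$ is the global $\mathcal{H}_2$ minimizer rather than the LS asymptote, and it hinges on the exact single-pole Blaschke-residual identity and a careful application of the maximum-modulus principle through the analyticity of $B_n$ (modulus itself being non-analytic).
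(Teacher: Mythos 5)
Your proposal is correct and follows essentially the same route as the paper: conditional Gaussian KL as a quadratic form in the Markov-parameter mismatch, bounding the input covariance by $\mathrm{ess}\sup_\omega\|\Phi_u(\omega)\|$, and then controlling $\|G-\check G\|_2$ via the per-pole Blaschke-product identity (the paper's Theorem~\ref{thm:vector_error}/Lemma~\ref{thm:system_approx_error}) together with the minimax choice of $\{\mu_k\}$ and the finite hyperbolic Chebyshev constant. Your observation that the input-spectrum ratio factor of Theorem~\ref{cor:system_approx_error} is absent here because $\check G$ is the exact $\mathcal H_2$ minimizer of Problem~\ref{prob:system_identification} (not the least-squares asymptote) is precisely how the paper's proof proceeds as well.
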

Theorem~\ref{thm:fundamental_limit} shows that the number of samples required to distinguish the true system from the surrogate system grows exponentially with the system dimension $n$, even though the two systems have entirely distinct poles. This result implies that identifying the true system poles becomes exponentially more challenging as the system dimension increases.

\begin{remark}
    Note that $\tau_n(\mathcal{D})$ converges to the hyperbolic Chebyshev constant of $\mathcal{D}$, which is less than $1$ for a strictly stable system. Consequently, the sample complexity grows exponentially with $n$. Moreover, as is illustrated by Lemma~\ref{lemma:hyperbolic_chebyshev_constant}, if the region of poles $\mathcal{D}$ is a real interval, $\tau(\mathcal{D})$ is usually small, and the sample complexity grows exponentially with $n$ at a fast speed. For example, even when $\mathcal{D}=[-0.999, 0.999]$, ${N\sim \mathcal{O}(1.81^{n})}$.
\end{remark}
\begin{remark}
    The result can be generalized to unstable systems by establishing the hardness on the identification of the stable subsystem. To be specific, by partial fraction decomposition, the transfer function matrix $G(z)$ can be rewritten into
    \[G(z)=G_s(z)+G_n(z),\]
    where $G_s(z)$ contains all the stable poles and $G_n(z)$ contains all the unstable or marginally stable poles. Even if $G_n(z)$ is perfectly known from an oracle, which simplifies the identification problem, we can still consider the modified system output
    \[\tilde y_t=y_t-G_n(z)u_t = G_s(z)u_t+v_t,\]
    and the sample complexity of the identification  has a lower bound defined by the sample complexity of the stable subsystem.
\end{remark}

Additionally, motivated by recent studies highlighting the ill-conditioned nature of \emph{state-space model identification}~\cite{tsiamis_linear_2021,li2022fundamental}, we extend our results to state-space models. The proof of this corollary appears in Appendix~\ref{append:fundamental_limit}.
\begin{corollary}\label{cor:fundamental_limit_ss}
    Recall that $\underline n=\lfloor\frac{n}{\min(p, m)}\rfloor$. For any $n$-dimensional state-space model with $m$ inputs, $p$ outputs and parameters $A, B, C$, whose corresponding transfer function $G$ and the observation noises satisfy Assumption~\ref{assump:system_stability},~\ref{assump:bias} and Assumption~\ref{assump:fundamental_limit}, there exists a corresponding $n$-dimensional state-space realization of the constructed $\check G$, such that the sample complexity for differentiating the two hypotheses in~\eqref{eq:hypothesis} with the KL divergence no smaller than $\delta$ satisfies
    \begin{equation}
        \liminf_{n\to\infty} N^{1/(2\underline n)}\geq \tau(\mathcal D)^{-1}.
    \end{equation}
\end{corollary}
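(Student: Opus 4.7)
The plan is to reduce Corollary~\ref{cor:fundamental_limit_ss} to Theorem~\ref{thm:fundamental_limit} by exploiting the slackness between the number of distinct poles of a MIMO transfer function and its McMillan (state-space) degree when $\min(p,m)>1$. Specifically, any transfer function of the form $\check G(z)=\sum_{k=1}^{q}\check R_k/(z-\mu_k)$ with distinct $\mu_k\in\mathbb D$ and $\check R_k\in\mathbb C^{p\times m}$ admits a minimal state-space realization of dimension $\sum_{k=1}^{q}\mathrm{rank}(\check R_k)\leq q\cdot\min(p,m)$. Hence, using only $q=\underline n=\lfloor n/\min(p,m)\rfloor$ Chebyshev poles with full-rank residues still produces a $\check G$ that fits inside an $n$-dimensional state-space realization; if the minimal dimension falls short of $n$, one appends a stable uncontrollable (or unobservable) block with eigenvalues in $\mathcal D$ so as to reach state dimension exactly $n$ without altering the transfer function.

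Building on this, I would first mimic the construction in the proof of Theorem~\ref{thm:fundamental_limit} with $q=\underline n$ instead of $q=n$: choose $\mu_1,\ldots,\mu_{\underline n}$ as the $\underline n$-th Chebyshev points of $\mathcal D$ (the minimizers of Problem~\ref{prob:minimax_problem} for this value of $q$) and $\check R_1,\ldots,\check R_{\underline n}$ as the $\mathcal H_2$-optimal residues given by Problem~\ref{prob:system_identification}. The KL divergence computation of Theorem~\ref{thm:fundamental_limit} then transfers almost verbatim, since that argument only uses (i) the i.i.d.\ Gaussian innovations granted by Assumption~\ref{assump:fundamental_limit}, (ii) the asymptotic bias bound of Theorem~\ref{cor:system_approx_error} applied to $\check G$, and (iii) the Chebyshev property of the selected poles. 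With these $\underline n$ Chebyshev points, Definition~\ref{def:chebyshev_constant} delivers $\max_j\prod_{k=1}^{\underline n}[\lambda_j,\mu_k]_{\mathrm h}\leq\tau_{\underline n}(\mathcal D)^{\underline n}$, so the state-space KL bound reads
\begin{equation*}
\mathbb E\,D_{\mathrm{KL}}(\mathbb P_0\|\mathbb P_1)\leq\frac{\bar R^{2}\|\mathcal R^{-1}\|\,\mathrm{ess}\sup_\omega\|\Phi_u(\omega)\|}{2(1-\rho_\lambda^2)}\,\tau_{\underline n}(\mathcal D)^{2\underline n}\,N.
\end{equation*}
Imposing $\mathbb E\,D_{\mathrm{KL}}\geq\delta$ yields $N\geq C\,\tau_{\underline n}(\mathcal D)^{-2\underline n}$ for an $n$-independent constant $C$; taking $(2\underline n)$-th roots and using $\tau_{\underline n}(\mathcal D)\to\tau(\mathcal D)$ as $\underline n\to\infty$ gives $\liminf_{n\to\infty}N^{1/(2\underline n)}\geq\tau(\mathcal D)^{-1}$.

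The main obstacle is the state-dimension bookkeeping: one must justify that a MIMO partial-fraction form with $\underline n$ distinct poles and rank-$\min(p,m)$ residue matrices can indeed be realized in at most $n$ dimensions, and then padded to exactly $n$ without changing the input-output behavior. This reduces to a standard Kalman-type realization argument (diagonal $A$ block together with a suitable block-decoupled $B$ or $C$) plus a direct-sum extension by a stable diagonal dummy block supported in $\mathcal D$. Once the construction is in place, the remainder is a mechanical replacement of $n$ by $\underline n$ in the derivation of Theorem~\ref{thm:fundamental_limit}, since neither the likelihood-ratio calculation nor the bound in Theorem~\ref{cor:system_approx_error} requires the number of surrogate poles to equal the state dimension of the true system $G$.
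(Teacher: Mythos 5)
Your proposal is correct and follows essentially the same route as the paper: build the surrogate from $\underline n$ Chebyshev poles so that Theorem~\ref{thm:fundamental_limit}'s KL bound applies with exponent $2\underline n$, realize $\check G$ in at most $\min(p,m)\,\underline n\le n$ states via a rank decomposition of the residues (the paper's Theorem~\ref{thm:system_realization}), and pad the realization to dimension exactly $n$ (the paper appends zero blocks rather than a stable dummy block in $\mathcal D$, an immaterial difference). The only cosmetic discrepancy is your use of $\tau_{\underline n}(\mathcal D)$ where the paper writes $\tau_n(\mathcal D)$ in the finite-$n$ bound; both yield the same limit.
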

\begin{rese}
\begin{remark}
    Theorem~\ref{thm:fundamental_limit} and Corollary~\ref{cor:fundamental_limit_ss} indicate that any algorithm attempting to estimate the true pole locations from finite noisy data, including both classical state-space identification methods such as Ho-Kalman and MOESP and more recent approaches that aim to recover state-space parameters from input-output trajectories~\cite{oymak_revisiting_2022,Tsiamis2019}, may be intrinsically ill-conditioned, particularly as the system dimension increases.
\end{remark}
\end{rese}

\section{Simulations}\label{sec:simulation}
This section provides numerical examples to verify the derived bounds and illustrate the effectiveness of the proposed pole selection algorithm.

We begin by testing the performance of the pole selection algorithm in identifying the system
\begin{equation}\label{eq:small_system_equation}
    G(z)=\frac{0.0247z+0.0355}{\prod_{j=1}^4(z-\lambda_j)},
\end{equation}
using $q$ OBFs, where
\[\lambda_1\sim\mathcal N(0, 0.02^2), \lambda_2\sim\mathcal N(0, 0.02^2),\]
\[\lambda_3\sim\mathcal N(0.9048, 0.02^2), \lambda_4\sim\mathcal N(0.3679, 0.02^2).\]
Moreover, all poles $\lambda_j$ are projected back to the interval $[-0.95, 0.95]$, which is used as \emph{a priori} information. The expected location of the true poles and the numerator of the transfer function in~\eqref{eq:small_system_equation} are chosen according to the system used in~\cite{mi_frequency-domain_2012, ninness_unifying_1997_cdc}. We further add noise to the true poles to test the robustness of the pole selection algorithms. A total of $100$ independent experiments are conducted.

We apply the following pole selection algorithms to the perturbed systems:
\begin{itemize}
    \item \emph{Minimax method~\cite{van_den_hof_system_2005}}: The minimax optimization problem (Problem~\ref{prob:minimax_problem}) is solved using MATLAB's minimax solver \res{\texttt{fminimax}}, assuming the pole region is $\mathcal D=[-0.95, 0.95]$.
    \res{\item \emph{SQP method~\cite{bachnas2023advancing}}:
    The min-max pole selection problem (Problem~\ref{prob:minimax_problem}) is solved using a multi-start Sequential Quadratic Programming (SQP) approach implemented via MATLAB's \texttt{fmincon}, following~\cite[Chapter 3.4.1, Algorithm 1]{bachnas2023advancing}, with pole region $\mathcal D=[-0.95, 0.95]$.}
    \res{\item \emph{RA method~\cite{bachnas2023advancing}}:
    The min-max pole selection problem (Problem~\ref{prob:minimax_problem}) is solved using the Randomized Algorithm (RA) introduced in~\cite[Chapter 3.4.2, Algorithm 3]{bachnas2023advancing}, based on probabilistic performance verification and bisection search, with pole region $\mathcal D=[-0.95, 0.95]$.}
    \item \emph{Greedy method~\cite{mi_frequency-domain_2012}}: \res{This method selects poles sequentially in an adaptive manner. At each 
    iteration, it computes the \emph{residual function}, defined as the part 
    of the true system's transfer function that is not yet captured by the 
    previously selected OBF bases. The algorithm then chooses the pole whose 
    associated basis function is most aligned with this residual in the 
    $\mathcal{H}_2$ sense, updates the residual accordingly, and repeats. 
    In our implementation, the true system transfer function is used directly 
    as the nominal system~\eqref{eq:small_system_equation}.}
    \item \emph{Initial guess of Tsuji points}: The initial Tsuji points are calculated using the method in Section~\ref{sec:pole_allocation_algorithm} and Appendix~\ref{append:tsuji_points}, with pole region $\mathcal D=[-0.95, 0.95]$. 
    \item \emph{Tsuji points}: The Tsuji points are solved by the maximization Problem~\ref{prob:max_opt_problem} using the previously calculated initial guess.
\end{itemize}
Performance of identification is measured using the relative asymptotic $\mathcal H_2$ bias, defined as the ratio between the optimal value of Problem~\ref{prob:system_identification} and the $\mathcal H_2$ norm of the true system $\|G(z)\|_2$. 

Fig.~\ref{fig:small_system_bias} shows the relative $\mathcal H_2$ approximation bias versus the number of poles $q$. The figure demonstrates that the asymptotic approximation bias of our Tsuji pole selection algorithm decreases exponentially with the number of basis functions, confirming the theoretical results from Section~\ref{sec:main_results}.

\res{To further interpret the observed trends in Fig.~\ref{fig:small_system_bias},
we compare the behavior of different pole selection methods.
Several numerical solvers, including minimax optimization, SQP, and RA,
are designed to approximate the global solution of the minimax pole selection problem
characterized by Problem~\ref{prob:minimax_problem}. It can be observed that for a moderate number of bases, these optimization methods are more likely to attain
the global optimum of the minimax problem, and when this occurs, the resulting
approximation errors are very small.
This behavior is evident for $q=10$, where the minimax-based solution achieves the smallest approximation bias among all methods.} \res{As the number of bases increases, however, the optimization problem becomes more nonconvex and sensitive to initialization, leading to degraded performance and increased variability across Monte Carlo trials. For $q=15$, the minimax-based solvers exhibit noticeably larger worst-case errors. In contrast, the Tsuji points remain stable and consistently achieve small \emph{worst-case} asymptotic biases.}

\res{From a robustness perspective, the worst-case approximation bias is a more meaningful metric than best-case performance, since all methods may occasionally achieve small errors when the selected basis poles happen to be well aligned with the dominant poles of the true system, especially for low-dimensional systems. In this sense, the Tsuji-based methods consistently exhibit small and stable
\emph{worst-case} asymptotic biases, which aligns well with
their theoretical design objective of minimizing the worst-case approximation error.}

\begin{figure}[!htbp]
    \centering
    \begin{subfigure}{0.45\textwidth}
        \centering
        \includegraphics[width=\textwidth]{./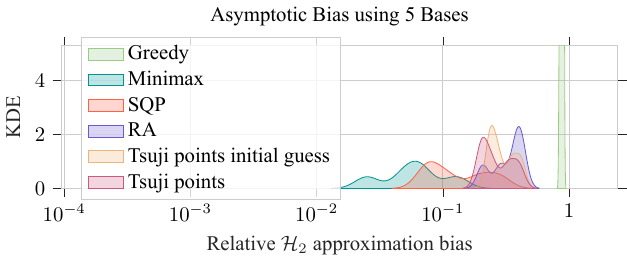}
    \end{subfigure}
    \begin{subfigure}{0.45\textwidth}
        \centering
        \includegraphics[width=\textwidth]{./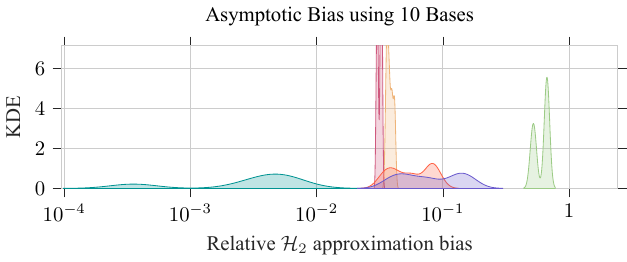}
    \end{subfigure}
    \begin{subfigure}{0.45\textwidth}
        \centering
        \includegraphics[width=\textwidth]{./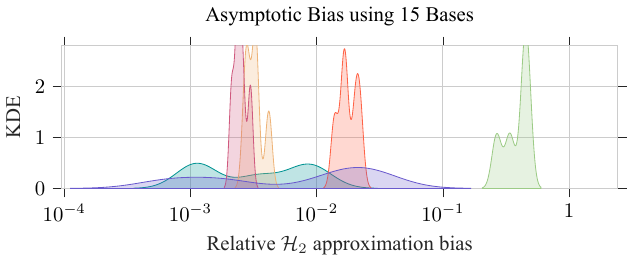}
    \end{subfigure}
    \caption{\res{Asymptotic relative $\mathcal H_2$ approximation bias of the nominal system
    \eqref{eq:small_system_equation} under Gaussian perturbations of the true system
    poles, visualized using a multi-kernel density estimate (KDE).}
    }
    \label{fig:small_system_bias}
\end{figure}

Next, to further demonstrate the effectiveness of the proposed method, we examine the identification of a higher-order system. Consider a heat diffusion process~\cite{mo_network_2009} in a $(3\times 3)\mathrm{m}$ square region with internal obstacles, as shown in Fig.~\ref{fig:region_shape}. The two small circles are centered at $(0.75, 2.25)$ and $(2.25, 0.75)$ respectively, each with a radius of $0.1\mathrm{m}$. Additionally, a half-circle obstacle is centered at $(1.5, 0.95)$ with a radius of $0.55\mathrm{m}$.

\begin{figure}[!htbp]
    \vspace{-0.3cm}
    \centering
    \includegraphics[width=0.4\linewidth]{./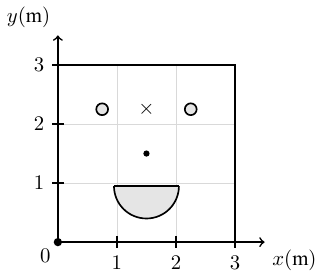}
    \caption{The shape of the region considered in the diffusion process. The obstacles are shown in gray, the heat sources are denoted as $\times$, and the sensor is denoted as the black dot.}
    \label{fig:region_shape}
    \vspace{-0.5cm}
\end{figure}

We denote the temperature at $(x, y)$ and time $t$ by $s(x, y, t)$, and the dynamics of the diffusion process in the square region are characterized by the following Partial Differential Equation (PDE):
\begin{equation}\label{eq:diffusion_pde}
    \frac{\partial s}{\partial t} = \alpha(x, y) \left( \frac{\partial^2 s}{\partial x^2} + \frac{\partial^2 s}{\partial y^2} \right),
\end{equation}
with the boundary condition
\[
    s(x, y, t) = 0, \quad \forall (x, y) \in \mathcal{B},
\]
where $\alpha(x, y)$ denotes the diffusion constant at $(x, y)$, and $\mathcal{B}$ represents the boundary of the region. A heat source is located at $(1.5, 2.25)$ and is set to a temperature $u_k \sim \mathcal{N}(0, 1)$ at each time step. A sensor is positioned at $(1.5, 1.5)$. The system is further discretized on a $10 \times 10$ grid following the method in~\cite{mo_network_2009}. The experiment is repeated $100$ times, with each instance introducing element-wise Gaussian perturbations to the system matrix $A$ (standard deviation of $10^{-3}$) to account for factors like medium heterogeneity. 

We apply \res{all six} pole selection methods used in the previous experiment for simulation, keeping the same configurations, except that the pole region is now set to $\mathcal{D} = [-0.99, 0.99]$. Although $\mathcal D$ is specified as a real interval, the true poles of the system can be complex and lie outside $\mathcal D$ due to the perturbations in $A$, making the \emph{a priori} information inaccurate. 

\res{The simulation results are shown in Fig.~\ref{fig:large_system_bias}. 
For this high-dimensional system, OBFs with poles selected by the minimax, SQP, RA,
and greedy methods exhibit relatively large asymptotic biases, together with
increased variability across trials. In contrast, the Tsuji points achieve the smallest asymptotic bias
using only $15$ bases for the $100$-dimensional system and exhibit consistently robust performance, further
highlighting the effectiveness of the proposed approach in high-dimensional
settings.}

\res{Compared with the lower-dimensional case, the advantage of the Tsuji-based
pole selection becomes more pronounced in the high-dimensional setting.
In low-dimensional systems, small approximation errors may occasionally be observed
due to incidental alignment between the selected basis poles and the dominant poles
of the true system.
As the system dimension increases, such favorable alignment becomes increasingly
unlikely, and the consistently small average and worst-case asymptotic biases
achieved by the Tsuji points highlight their robustness, especially in high-dimensional systems.}

\res{Additionally, we compare the computation time of all pole selection methods as a function of the number of bases~$q$, with the results reported in Fig.~\ref{fig:time_with_q}. The initial guess of the Tsuji points exhibits the shortest computation time among all approaches, while the RA method also demonstrates competitive efficiency. The Tsuji points method is slightly slower but remains in the same order of magnitude. Notably, the runtimes of these three methods are all below $10^{-2}$ seconds and do not increase noticeably as~$q$ grows. In contrast, the remaining minimax, SQP and greedy methods incur higher computational costs and exhibit an increase in runtime for larger~$q$. Overall, these results indicate that the proposed Tsuji pole selection achieves a favorable trade-off between approximation accuracy and computational efficiency, particularly in regimes with larger numbers of bases.}

To further validate the convergence rate derived in Theorem~\ref{thm:convergence}, we solve the least-squares problem in~\eqref{eq:finite_least_squares_1} to identify both previously considered systems. We use $10$ OBFs, each configured with Tsuji points computed for the respective \res{pole region}. A total of $100$ experiments are conducted with $500$ time steps each. The relative $\mathcal{H}_2$ identification bias as a function of time step is plotted in Fig.~\ref{fig:online_error}, confirming the convergence rates in Theorem~\ref{thm:convergence}. \res{For comparison, Fig.~\ref{fig:online_error} also reports the identification error of the Ho-Kalman algorithm, implemented as in~\cite{oymak_revisiting_2022} with system dimension $n=10$ and $T=30$. The results show that the Ho-Kalman algorithm exhibits considerably larger finite-sample identification error for both systems, consistent with results reported in the literature~\cite{chiuso_ill-conditioning_2004, hachicha2014n4sid}. This further illustrates that the OBF-based method achieves more reliable identification performance in finite-sample regimes.}

\begin{figure}[!htbp]
    \centering
    \begin{subfigure}{0.45\textwidth}
        \centering
        \includegraphics[width=\textwidth]{./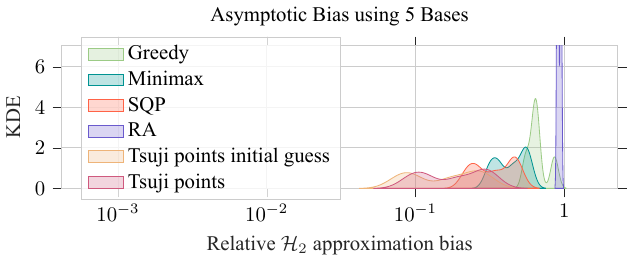}
    \end{subfigure}
    \begin{subfigure}{0.45\textwidth}
        \centering
        \includegraphics[width=\textwidth]{./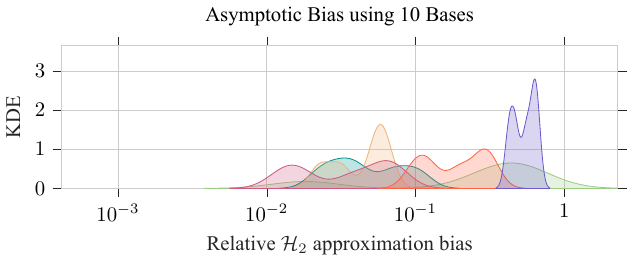}
    \end{subfigure}
    \begin{subfigure}{0.45\textwidth}
        \centering
        \includegraphics[width=\textwidth]{./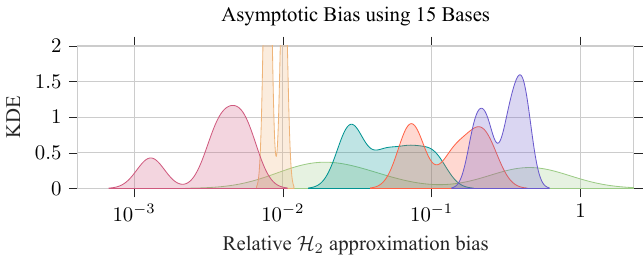}
    \end{subfigure}
    \caption{\res{Asymptotic relative $\mathcal H_2$ approximation bias of the diffusion process
    under Gaussian perturbations of the true system poles, visualized using a
    multi-kernel density estimate (KDE).}}

    \label{fig:large_system_bias}
\end{figure}
\begin{rese}
\begin{figure}[!htbp]
    \centering
    \begin{subfigure}{0.45\textwidth}
        \centering
        \includegraphics[width=\textwidth]{./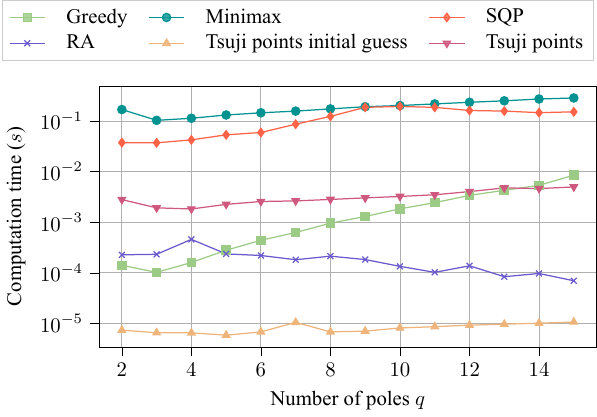}
    \end{subfigure}
    \begin{subfigure}{0.45\textwidth}
        \centering
        \includegraphics[width=\textwidth]{./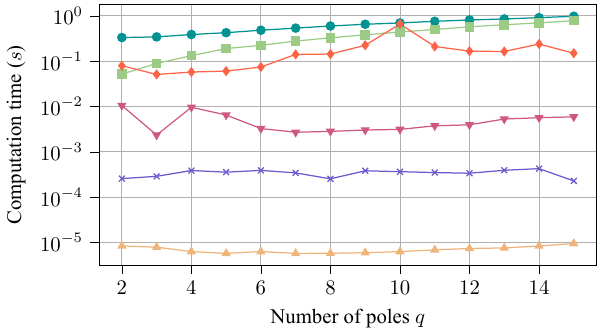}
    \end{subfigure}
    \caption{\res{Computation time versus the number of basis poles $q$ for different pole
    selection algorithms, evaluated on pole regions $\mathcal D=[-0.95, 0.95]$ (top) and $\mathcal D=[-0.99, 0.99]$ (bottom).
    }}
    \label{fig:time_with_q}
    \vspace{-0.5cm}
\end{figure}
\end{rese}
\begin{figure*}[!htbp]
    \centering
    \begin{subfigure}{0.41\textwidth}
        \centering
        \includegraphics[width=\textwidth]{./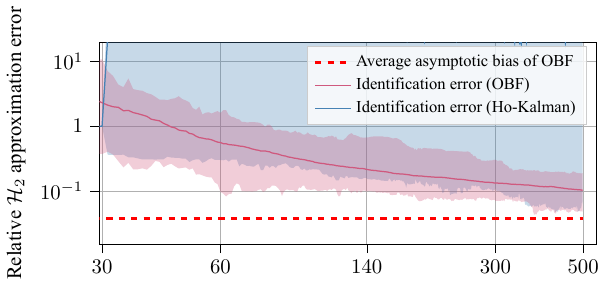}
    \end{subfigure}
    \begin{subfigure}{0.43\textwidth}
        \centering
        \includegraphics[width=\textwidth]{./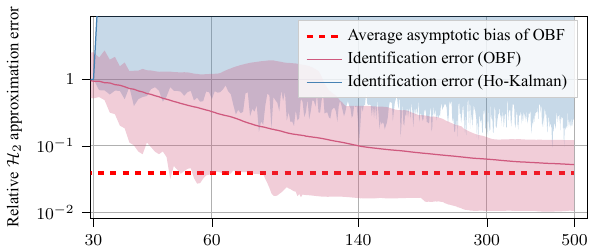}
    \end{subfigure}
    \caption{\res{Online relative $\mathcal H_2$ identification error of the
    system~\eqref{eq:small_system_equation} (left) and the diffusion process (right).
    Results are shown for OBF-based identification using $10$ basis functions with
    Tsuji points, as well as for the Ho-Kalman algorithm.}
    The plot reports $100$ Monte Carlo experiments, where the solid line denotes the
    mean identification error at each time step and the shaded area indicates the
    range of errors across experiments.}
    \label{fig:online_error}
    \vspace{-0.5cm}
\end{figure*}

\section{Conclusion}\label{sec:conclusion}
This paper analyzes the performance of the OBF method, showing that the identification error under $\mathcal H_2$ norm using the OBF method with $N$ samples converges to an asymptotic bias almost surely at the rate of $O(N^{-0.5})$. Additionally, we establish an upper bound on this bias, given by $\bar\alpha \tau(\boldsymbol{\lambda}, \boldsymbol{\mu})$, where $\tau(\boldsymbol{\lambda}, \boldsymbol{\mu})$ denotes the distance between the true system poles $\boldsymbol{\lambda}$ and the OBF poles $\boldsymbol{\mu}$. While the bound suggests that the ideal choice of OBF poles would match the true system poles, we prove that identifying the true poles becomes exponentially challenging as system dimension increases. To address this, we propose the Tsuji pole selection algorithm, which minimizes the worst-case identification bias across a specified class of systems. We further demonstrate that these selected poles achieve a fundamental bound on worst-case identification bias and provide an algorithm to compute near-optimal initial points for the maximization problem, mitigating issues of local minima. Numerical results validate the derived bounds and demonstrate the effectiveness of the proposed Tsuji pole selection algorithm.

\appendices

\section{Proof of Lemma~\ref{lemma:hyperbolic_chebyshev_constant}}\label{append:hyperbolic_chebyshev_constant}
We first introduce the following two lemmas:
\begin{lemma}[Hyperbolic Chebyshev Constant of real intervals; \res{see page~278 in}~\cite{KIRSCH2005243}]\label{lemma:real_chebyshev_constant}
    The hyperbolic Chebyshev constant of the real interval $[0, \rho], \rho<1$ is
    \[\tau([0, \rho])=\exp \left\{-\frac{\pi}{2} \frac{K\left(\sqrt{1-\rho^2}\right)}{K(\rho)}\right\},\]
    where
    \[K(\rho)=\int_0^1 \frac{d x}{\sqrt{\left(1-x^2\right)\left(1-\rho^2 x^2\right)}}.\]
\end{lemma}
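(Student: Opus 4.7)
The plan is to first identify $\tau([0,\rho])$ with a conformal modulus of the slit disk, and then evaluate that modulus via elliptic integrals.

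First, by Proposition~\ref{prop:tsuji_chebyshev_relation}, $\tau([0,\rho])=d([0,\rho])$, so it suffices to compute the hyperbolic transfinite diameter. By a standard result in hyperbolic potential theory (the pseudohyperbolic analogue of Fekete's theorem, which uses Möbius invariance of $[z,\mu]_{\mathrm h}$ in place of translation invariance of $|z-\mu|$), the hyperbolic transfinite diameter of a continuum $\mathcal{D}\subset\mathbb{D}$ equals the hyperbolic logarithmic capacity, which in turn equals the unique radius $r\in(0,1)$ for which $\mathbb{D}\setminus\mathcal{D}$ is conformally equivalent to the annulus $\{z:r<|z|<1\}$, with $\partial\mathbb{D}$ corresponding to the outer circle. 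Hence $\tau([0,\rho])=r_0$, where $r_0$ is the inner radius of the annulus conformally equivalent to the slit disk $\mathbb{D}\setminus[0,\rho]$.

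Second, to compute $r_0$, I use the identity $r_0=e^{-2\pi M}$, where $M$ is the conformal modulus of $\mathbb{D}\setminus[0,\rho]$. Exploiting the reflection symmetry $z\mapsto\bar z$, I cut along $[-1,0]\cup[\rho,1]$ to obtain the upper half $\Omega^+$, a simply connected quadrilateral whose four boundary arcs are the upper semicircle of $\partial\mathbb{D}$, the two edges of the slit traversed from $1\to\rho\to 0$, and the real segment $[-1,0]$. Schwarz reflection across the real diameter then recovers $\mathbb{D}\setminus[0,\rho]$ from two copies of $\Omega^+$ glued along their real edges, which identifies the modulus $M$ of the doubly connected domain with (a constant multiple of) the modulus of $\Omega^+$ as a quadrilateral, with the semicircle paired against the slit edges.

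Third, I compute the modulus of $\Omega^+$ by a Schwarz--Christoffel mapping. Composing a suitable Möbius map from $\mathbb{D}$ onto the upper half-plane with the elliptic integral $\int dt/\sqrt{(1-t^2)(1-k^2 t^2)}$ uniformizes $\Omega^+$ onto a Euclidean rectangle with side lengths $K(k)$ and $K(\sqrt{1-k^2})$. A direct cross-ratio computation on the four distinguished boundary vertices $\{-1,0,\rho,1\}$ forces the elliptic modulus to be $k=\rho$. Tracking the factors from Schwarz reflection and from the annulus convention $r=e^{-2\pi M}$ yields $M=\tfrac{1}{4}\,K(\sqrt{1-\rho^2})/K(\rho)$, and hence $r_0=\exp\!\bigl(-\tfrac{\pi}{2}\,K(\sqrt{1-\rho^2})/K(\rho)\bigr)$, as claimed. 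The main obstacle will be the bookkeeping in this step: choosing the Möbius map so that $\{-1,0,\rho,1\}$ land in the standard Legendre configuration, verifying by cross ratios that the elliptic modulus is exactly $\rho$ (rather than a related quantity such as $\rho^2$ or $2\rho/(1+\rho^2)$), and correctly keeping track of the factors of $2$ that arise from Schwarz reflection and from the two standard conventions for the modulus of a doubly connected domain.
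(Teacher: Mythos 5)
Your proposal is correct, but it is not comparable to anything in the paper: the paper offers no proof of this lemma at all, citing it directly from page~278 of the Kirsch reference, so what you have written is a genuine self-contained argument where the paper outsources to the literature. Your route is sound and your constants check out: the quadrilateral $\Omega^+$ (the upper half-disk with distinguished vertices $-1,0,\rho,1$) maps under $w=-\tfrac12(z+1/z)$ to the upper half-plane with vertices $-1,1,\infty,-(1+\rho^2)/(2\rho)$, and the cross-ratio $\frac{1+\tilde\rho}{1-\tilde\rho}=\bigl(\frac{1+\rho}{1-\rho}\bigr)^2$ with $\tilde\rho=2\rho/(1+\rho^2)$ does force the Legendre modulus to be exactly $k=\rho$ rather than $\tilde\rho$; the doubling then gives $M=\tfrac14 K(\sqrt{1-\rho^2})/K(\rho)$ and $r_0=e^{-2\pi M}=\exp\bigl(-\tfrac{\pi}{2}K(\sqrt{1-\rho^2})/K(\rho)\bigr)$, which is the classical Gr\"otzsch ring modulus. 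Two remarks. First, your step identifying $\tau([0,\rho])$ with the inner radius of the conformally equivalent annulus is invoked via an external "hyperbolic Fekete / capacity" theorem; you could instead ground it entirely in the paper's own toolkit, since Lemma~\ref{lemma:conformal_invariant} (conformal invariance of $\tau$ for maps of $\mathbb{D}\setminus\mathcal D$ fixing the unit circle) combined with statement~\ref{item:disk} of Lemma~\ref{lemma:hyperbolic_chebyshev_constant} ($\tau(\mathbb{\bar D}_r)=r$) yields $\tau(\mathcal D)=r_0$ immediately, at the cost of no new citations. Second, your Schwarz--Christoffel computation is essentially the same uniformization the paper carries out in Theorem~\ref{thm:conformal_mapping} for the interval $[-\rho,\rho]$, so your argument fits naturally alongside Appendix~\ref{append:tsuji_points}; the only loose wording is your description of the boundary of $\Omega^+$, whose four sides should be the semicircle, $[-1,0]$, the top edge of the slit $[0,\rho]$, and $[\rho,1]$, with the semicircle paired against the slit edge --- which is the pairing you ultimately use.
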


\begin{lemma}[Invariance under conformal mapping; \res{see page 278 in}~\cite{KIRSCH2005243}]\label{lemma:conformal_invariant}
    Let $\mathcal{D}$ and $\mathcal{\tilde D}$ be two distinct closed subsets of the open unit disk $\mathbb{D}$. Let $\mathcal{F}$ be the family of conformal mapping $f$ of $\mathbb{D}\backslash\mathcal{D}$ onto $\mathbb{D}\backslash\mathcal{\tilde D}$ bordering on the unit circle, such that $f(\partial \mathbb{D})=\partial \mathbb{D}$. Then
    \[\tau(\mathcal{\tilde D})=\tau(\mathcal{D})\]
    for all $f\in\mathcal{F}$.
\end{lemma}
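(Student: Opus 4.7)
The plan is to recognize $\tau(\mathcal{D})$ as a potential-theoretic conformal invariant of the doubly connected region $\mathbb{D}\setminus\mathcal{D}$ with distinguished outer boundary $\partial\mathbb{D}$, and then transport that invariant through $f$.

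First I would express the pseudohyperbolic product as a Blaschke modulus: $\prod_{k=1}^{q}[z,\mu_k]_{\mathrm h}=|B(z)|$ for the finite Blaschke product $B(z)=\prod_{k}\frac{z-\mu_k}{1-\bar\mu_k z}$. Since $|B|\equiv 1$ on $\partial\mathbb{D}$, the function $-\log|B(z)|$ is harmonic in $\mathbb{D}\setminus\{\mu_k\}$ with unit logarithmic poles at the $\mu_k$ and Dirichlet data $0$ on $\partial\mathbb{D}$; equivalently it is a sum of hyperbolic Green functions of $\mathbb{D}$. Consequently, the minimax problem in~\eqref{eq:finite_chebyshev_constant} that defines $\tau_q(\mathcal{D})$ is a discrete Fekete-type extremal problem for the hyperbolic kernel $\kappa(z,w):=-\log[z,w]_{\mathrm h}$.

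Next, via the Fekete--Leja passage to the continuous limit (identical in structure to the classical Euclidean argument, but with the kernel $\kappa$ in place of $-\log|z-w|$), I would establish
\[
-\log\tau(\mathcal{D})\;=\;\min_{\sigma\in\mathcal{P}(\mathcal{D})}\iint\kappa(z,w)\,d\sigma(z)\,d\sigma(w),
\]
realized by a unique equilibrium measure $\sigma^{*}_{\mathcal{D}}$ supported on $\partial\mathcal{D}$. The right-hand side is exactly the hyperbolic Robin constant of $\mathcal{D}$, i.e., the value at $\partial\mathcal{D}$ of the Green function of $\mathbb{D}\setminus\mathcal{D}$ normalized to vanish on $\partial\mathbb{D}$. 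This is the characterization that will make conformal invariance manifest.

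Then, for an arbitrary $f\in\mathcal{F}$, conformal invariance of the Green function yields $G_{\mathbb{D}\setminus\mathcal{\tilde D}}(f(z),f(w))=G_{\mathbb{D}\setminus\mathcal{D}}(z,w)$, and because $f$ sends $\partial\mathbb{D}$ to $\partial\mathbb{D}$, the normalization that vanishes on the outer boundary is preserved. Pushing $\sigma^{*}_{\mathcal{D}}$ forward by $f$ produces a probability measure on $\partial\mathcal{\tilde D}$ whose hyperbolic energy equals that of $\sigma^{*}_{\mathcal{D}}$; the symmetric argument using $f^{-1}$ gives the reverse inequality. By the extremal characterization above, the two Robin constants coincide, so $\tau(\mathcal{D})=\tau(\mathcal{\tilde D})$.

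The main obstacle will be the Fekete--Leja identification of the discrete hyperbolic Chebyshev/Tsuji quantities with the continuous equilibrium energy for the kernel $\kappa$; once that is in hand, the conformal-invariance step reduces to the standard transformation rule of the Green function together with the boundary hypothesis $f(\partial\mathbb{D})=\partial\mathbb{D}$. A secondary technical point is verifying that $\sigma^{*}_{\mathcal{D}}$ is indeed supported on $\partial\mathcal{D}$ (a maximum-principle argument for the Green potential), which ensures the push-forward by $f$ is well defined even though $f$ is a priori only a conformal map on $\mathbb{D}\setminus\mathcal{D}$.
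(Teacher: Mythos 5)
The paper does not actually prove this lemma: it is quoted verbatim from page~278 of the cited survey, so there is no in-paper argument to compare against. Your overall strategy --- writing $\prod_k[z,\mu_k]_{\mathrm h}$ as the modulus of a Blaschke product, hence $-\log[z,w]_{\mathrm h}$ as the Green function $g_{\mathbb D}(z,w)$ of the unit disk, and identifying $-\log\tau(\mathcal D)$ with the Green--Robin constant (equivalently the reciprocal condenser capacity, i.e.\ $2\pi$ times the modulus of the ring domain $\mathbb D\setminus\mathcal D$) --- is the standard potential-theoretic route, and the Fekete--Leja identification you flag as the main obstacle is indeed the substantive input.

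However, the conformal-invariance step as written does not go through. The kernel in your energy integral is $g_{\mathbb D}(z,w)$, the Green function of the \emph{whole disk}, evaluated at points of $\mathcal D$ (or $\partial\mathcal D$). That kernel is invariant under M\"obius automorphisms of $\mathbb D$ but \emph{not} under a conformal map $f$ defined only on the ring domain $\mathbb D\setminus\mathcal D$: in general $g_{\mathbb D}(f(z),f(w))\neq g_{\mathbb D}(z,w)$, so the pushforward $f_*\sigma^*_{\mathcal D}$ does not have the same hyperbolic energy computed termwise. The identity you invoke, $G_{\mathbb D\setminus\tilde{\mathcal D}}(f(z),f(w))=G_{\mathbb D\setminus\mathcal D}(z,w)$, is true but concerns the Green function of the ring domain, whose poles lie in $\mathbb D\setminus\mathcal D$; it is not the kernel in your extremal problem and does not transfer the energy. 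The invariance you actually need is that of the condenser capacity $\operatorname{cap}(\mathcal D,\partial\mathbb D)$: the equilibrium potential $U^{\sigma^*}$ equals the Robin constant $V=-\log\tau(\mathcal D)$ on $\mathcal D$, vanishes on $\partial\mathbb D$, and is harmonic in between, so $u=U^{\sigma^*}/V$ is the capacity potential of the condenser; since $f$ preserves the labeling of the two boundary components, $u\circ f^{-1}$ is the capacity potential for $\tilde{\mathcal D}$, and conformal invariance of the Dirichlet integral (equivalently of the boundary flux of $u$) gives equal capacities, hence equal Robin constants and $\tau(\mathcal D)=\tau(\tilde{\mathcal D})$. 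Substituting this Dirichlet-integral argument for the pushforward-of-measure step closes the gap; the remainder of your outline is sound, modulo the boundary-extension caveat you already noted.
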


Then, we are ready to prove the results in Lemma~\ref{lemma:hyperbolic_chebyshev_constant}.
\begin{proof}
    Statement~\ref{item:disk} is proved by~\cite{KIRSCH2005243} and is omitted here for simplicity.

    For statement~\ref{item:real_interval}, according to Lemma~\ref{lemma:conformal_invariant}, we aim to seek the corresponding conformal mapping that maps the interval $[\rho_1, \rho_2]$ to $[0, \tilde \rho]$. Then, Lemma~\ref{lemma:real_chebyshev_constant} can be applied to obtain the hyperbolic Chebyshev constant of the interval $[\rho_1, \rho_2]$. One can verify that the following conformal mapping satisfies our requirements:
    \[f(z)=\frac{z-\rho_1}{1-\bar\rho_1z},\]
    and $f(\rho_1)=0$. Therefore,
    \[\tilde \rho=f(\rho_2)=\frac{\rho_2-\rho_1}{1-\rho_1\rho_2}.\]
    Finally, using Lemma~\ref{lemma:real_chebyshev_constant}, statement~\ref{item:real_interval} is proved.
\end{proof}

\section{Proof of Theorem~\ref{cor:system_approx_error} and Theorem~\ref{thm:system_approximation_error}}\label{append:system_approx_error}
We briefly outline our main idea for quantifying the asymptotic bias in Theorem~\ref{cor:system_approx_error}. The bias between \(G(z)\) and \(\check G^*(z)\) is influenced by two factors: the statistical properties of the input signal \(u_t\) and the approximation bias of the system \(G(z)\) using the bases \(V_k(z)\). To analyze the asymptotic bias, we first isolate the effect of \(u_t\) by introducing a medium system $\tilde G(z)$ where \(u_t\) is i.i.d. white noise with zero mean and unit covariance:
\begin{align}
    &\tilde R_1^*, \dots, \tilde R_q^* = \arg\min_{\tilde{R}_1, \dots, \tilde{R}_q} \left\| G(z) - \sum_{k=1}^q \tilde{R}_k V_k(z) \right\|_2,
    \label{eq:R_star_0_definition}
\end{align}
\begin{align}
    \tilde G(z) &= \sum_{k=1}^q \tilde{R}_k^* V_k(z), \quad G_e(z) = G(z) - \tilde G(z). \label{eq:G_e_def}
\end{align}
The system \(\tilde G(z)\) also serves as the optimal solution to Problem~\ref{prob:system_identification}. Using $\tilde G(z)$, we can now quantify the bias between $\check G^*(z)$ and $G(z)$ by first establishing the relationship between \(\|\check G^*(z) - G(z)\|_2\) and \(\|G_e(z)\|_2\), which captures the influence of the input signal \(u_t\). We then proceed to quantify the asymptotic bias \(\|G_e(z)\|_2\).

The following proposition provides an upper bound on the asymptotic bias between $\check G^*(z)$ and $G(z)$ in terms of $G_e(z)$:
\begin{proposition}[Asymptotic bias with arbitrary inputs\res{; following Proposition~6.3 in}~\cite{van_den_hof_system_1995}]\label{thm:bias_with_arbitrary_inputs}
    Suppose the noise sequence $\{v_k\}$ is a martingale difference sequence. Then, the asymptotic approximation bias between the solution of the least-squares problem~\eqref{eq:finite_least_squares_1}, $\check G^*(z)$, and the true system $G(z)$ satisfies
    \begin{align}
        \left\|\check G^*(z)-G(z)\right\|_2\leq \left[1+\frac{\mathrm{ess}\sup_{\omega}\|\Phi_u(\omega)\|}{\mathrm{ess}\inf_{\omega}\|\Phi_u(\omega)\|}\right]\|G_e(z)\|_2,\label{eq:approximation_bias_arbitrary_inputs}
    \end{align}
    where $G_e(z)$ is defined in~\eqref{eq:G_e_def} and $\check G^*(z)$ is introduced in~\eqref{eq:asymptotic_result}.
\end{proposition}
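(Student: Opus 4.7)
My plan is to reduce everything to a frequency--domain projection argument. The first step is to express the objective in~\eqref{eq:expected_least_squares_problem} in the frequency domain: since $u_t$ is independent of the noise $v_t$ and both are stationary, Parseval's identity gives
\begin{equation*}
    \bar{\mathbb E}\left\|y_t-\sum_{k=1}^q \check R_k V_k(z)u_t\right\|^2
    =\frac{1}{2\pi}\!\int_{-\pi}^{\pi}\!\mathrm{tr}\!\left[(G-\check G)\Phi_u(G-\check G)^H\right]\!d\omega+C_v,
\end{equation*}
where $C_v$ is a term depending only on the noise. Hence $\check G^*$ is the minimizer of the $\Phi_u$-weighted $\mathcal H_2$ norm $\|\cdot\|_{\Phi_u}^2 \triangleq \frac{1}{2\pi}\int \mathrm{tr}[(\cdot)\Phi_u(\cdot)^H]d\omega$ over the span $\mathcal S=\mathrm{span}\{V_k\}$, while by definition $\tilde G$ is the minimizer of the standard (unweighted, $\Phi_u=I$) $\mathcal H_2$ norm over the same span.

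Next, I would exploit the projection structure in both inner products. Because $\tilde G$ is the orthogonal projection of $G$ onto $\mathcal S$ in the standard $\mathcal H_2$ inner product, and $\tilde G-\check G^*\in\mathcal S$, the Pythagorean identity yields
\begin{equation*}
    \|G-\check G^*\|_2^2=\|G-\tilde G\|_2^2+\|\tilde G-\check G^*\|_2^2=\|G_e\|_2^2+\|\tilde G-\check G^*\|_2^2.
\end{equation*}
Similarly, $\check G^*$ is the $\Phi_u$-orthogonal projection of $G$ onto $\mathcal S$, so
\begin{equation*}
    \|G-\tilde G\|_{\Phi_u}^2=\|G-\check G^*\|_{\Phi_u}^2+\|\check G^*-\tilde G\|_{\Phi_u}^2,
\end{equation*}
which immediately gives $\|\check G^*-\tilde G\|_{\Phi_u}^2\leq \|G_e\|_{\Phi_u}^2$.

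The final step is to translate between the two norms using the spectral bounds $\mathrm{ess}\inf_\omega\|\Phi_u(\omega)\|\cdot\|H\|_2^2\leq \|H\|_{\Phi_u}^2\leq \mathrm{ess}\sup_\omega\|\Phi_u(\omega)\|\cdot\|H\|_2^2$, which follow from the elementary matrix inequality $\mathrm{tr}[H\Phi_u H^H]\leq \|\Phi_u\|\mathrm{tr}[HH^H]$ applied pointwise in $\omega$. Combining these with the previous inequality bounds $\|\tilde G-\check G^*\|_2$ by a multiple of $\|G_e\|_2$, where the multiplicative factor is controlled by the ratio $\mathrm{ess}\sup\|\Phi_u\|/\mathrm{ess}\inf\|\Phi_u\|$. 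Plugging back into the Pythagorean identity and applying the triangle inequality $\|G-\check G^*\|_2\leq \|G_e\|_2+\|\tilde G-\check G^*\|_2$ produces the claimed bound~\eqref{eq:approximation_bias_arbitrary_inputs}.

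The main obstacle, in my view, is bookkeeping at the step where one converts $\|\cdot\|_{\Phi_u}$ to $\|\cdot\|_2$: the matrix--weighted spectrum requires one to work with the operator norm of $\Phi_u(\omega)$ rather than scalar weights, and to verify that the essential supremum and infimum are indeed attained so that the chain of inequalities collapses to the desired prefactor $1+\mathrm{ess}\sup\|\Phi_u\|/\mathrm{ess}\inf\|\Phi_u\|$ without extra square--root or constant factors. Assumption~\ref{assump:input} is what guarantees $\mathrm{ess}\inf_\omega\|\Phi_u(\omega)\|>0$ so that all these steps are well defined; the martingale--difference hypothesis on $\{v_t\}$ is needed only to justify ignoring the cross term between signal and noise in the expansion of the expected prediction error above.
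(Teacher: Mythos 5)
Your argument is essentially correct, and it is worth noting that the paper does not actually prove this proposition: it is imported by citation from Proposition~6.3 of the Van den Hof et al.\ reference, so your two-projection argument supplies a self-contained derivation in the same spirit as the cited source (expected prediction error as a $\Phi_u$-weighted $\mathcal H_2$ norm, comparison of the weighted and unweighted projections onto $\mathrm{span}\{V_k\}$, then norm equivalence and the triangle inequality). Three remarks. First, your opening claim that $u_t$ is independent of $v_t$ is false in the closed-loop setting of Assumption~\ref{assump:system_stability}: $u_t$ is fed back from $y_t$ and hence correlated with past noise. What actually kills the cross term in the Parseval expansion is the combination of the martingale-difference hypothesis on $\{v_t\}$ with the strict causality of $G-\check G$ (relative degree at least $1$), so that $(G-\check G)(z)u_t$ is measurable with respect to the past and $\bar{\mathbb E}\bigl[v_t^{H}(G-\check G)(z)u_t\bigr]=0$; you say this correctly at the end, but the earlier independence claim should be deleted. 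Second, in the MIMO case the lower half of your norm equivalence requires $\mathrm{ess}\inf_\omega\lambda_{\min}(\Phi_u(\omega))$ rather than $\mathrm{ess}\inf_\omega\|\Phi_u(\omega)\|$, since $\mathrm{tr}[H\Phi_uH^H]\geq\lambda_{\min}(\Phi_u)\,\mathrm{tr}[HH^H]$ while the operator norm only gives the upper bound; the paper's statement inherits this notational looseness from the scalar original, so you are consistent with it, but a careful write-up should flag it (and Assumption~\ref{assump:input} is what ensures the relevant infimum is positive). Third, your chain actually yields the sharper prefactor $1+\bigl(\mathrm{ess}\sup_\omega\|\Phi_u(\omega)\|/\mathrm{ess}\inf_\omega\|\Phi_u(\omega)\|\bigr)^{1/2}$; since the ratio is at least $1$, the square root is dominated by the ratio itself and the stated bound follows a fortiori, so the ``bookkeeping obstacle'' you worry about is not an obstacle at all.
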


Proposition~\ref{thm:bias_with_arbitrary_inputs} isolates the effect of the input signal \(u_t\) on the approximation bias, enabling us to focus on \(G_e(z)\). The following lemma provides an upper bound on \(\|G_e(z)\|_2\).

\begin{lemma}[Optimal $\mathcal H_2$ approximation bias with fixed $\mu_k$]\label{thm:system_approx_error}
    For a fixed group of bases $\{1/(z-\mu_k)\}_{k=1}^q$, the optimal $\mathcal H_2$ approximation bias $G_e(z)$ of $G$ using any linear combination of these bases is bounded by
    \begin{align}
        \|G_e(z)\|_2\leq \sum_{j=1}^n \frac{\|R_j\|}{\sqrt{1-|\lambda_j|^2}}\prod_{k=1}^q [\lambda_j, \mu_k]_{\mathrm h},
    \end{align}
    where $R_j$ and $\lambda_j$ are defined in~\eqref{eq:tf_decomposition}.
\end{lemma}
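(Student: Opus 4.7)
The plan is to bound $\|G_e\|_2$ by the sum of optimal \emph{single-pole} approximation errors via a triangle-inequality argument, thereby reducing the lemma to a classical single-pole identity expressible through the pseudohyperbolic metric.

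Starting from $G(z)=\sum_{j=1}^n R_j/(z-\lambda_j)$, for any choice of scalar coefficients $\{\tilde r_k^{(j)}\}$ the function $\hat G(z)=\sum_{j,k} R_j \tilde r_k^{(j)}/(z-\mu_k)$ lies in the span of the preselected bases, so the optimality of $G_e$ and the triangle inequality yield
\[
\|G_e\|_2 \;\le\; \sum_{j=1}^n \|R_j\|\left\|\frac{1}{z-\lambda_j} - \sum_{k=1}^q \frac{\tilde r_k^{(j)}}{z-\mu_k}\right\|_2,
\]
using the factorisation $\|R\alpha\|_2=\|R\|\,\|\alpha\|_2$ for a constant matrix $R$ and scalar transfer function $\alpha$. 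Minimising each term over the $\tilde r_k^{(j)}$ reduces the proof to the scalar single-pole identity
\[
\min_{\tilde r_1,\dots,\tilde r_q}\left\|\frac{1}{z-\lambda}-\sum_{k=1}^q \frac{\tilde r_k}{z-\mu_k}\right\|_2 \;=\; \frac{\prod_{k=1}^q [\lambda,\mu_k]_{\mathrm h}}{\sqrt{1-|\lambda|^2}}
\]
for any $\lambda\in\mathbb D$.

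To prove this single-pole identity, I would adjoin $\lambda$ as a $(q+1)$-th pole and apply the unified Malmquist construction~\eqref{eq:poles_induced_obfs} to $\mu_1,\ldots,\mu_q,\lambda$, producing an orthonormal set $\mathcal V_1,\ldots,\mathcal V_{q+1}$ that spans the $(q+1)$-dimensional enlargement $\mathrm{span}\{1/(z-\mu_k)\}_{k=1}^q \cup \{1/(z-\lambda)\}$. Since $\mathcal V_1,\ldots,\mathcal V_q$ involve only the first $q$ poles, they orthonormally span the preselected subspace, and so the orthogonal residual of $1/(z-\lambda)$ is a single scalar multiple $c_{q+1}\mathcal V_{q+1}$ with $\mathcal V_{q+1}(z) = \sqrt{1-|\lambda|^2}/(z-\lambda) \cdot B_q(z)$, where $B_q(z)\triangleq\prod_{k=1}^q (1-\bar\mu_k z)/(z-\mu_k)$ is the all-pass factor satisfying $|B_q(e^{i\omega})|=1$. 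Expanding $1/(z-\lambda)=\sum_{k=1}^{q+1} c_k \mathcal V_k$ and matching residues at $z=\lambda$, where only $\mathcal V_{q+1}$ contributes, yields $c_{q+1}=1/[\sqrt{1-|\lambda|^2}\,B_q(\lambda)]$, and the identity then follows from $|B_q(\lambda)|=\prod_k [\lambda,\mu_k]_{\mathrm h}^{-1}$.

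The main technical hurdle is verifying the Malmquist orthonormality and span properties in the exterior-of-disk $\mathcal H_2$ setting used here, together with the degenerate case $\lambda=\mu_{k_0}$, in which the residue-matching step fails but the bound holds trivially because $[\lambda,\mu_{k_0}]_{\mathrm h}=0$ makes the corresponding term on the right-hand side vanish. An alternative route avoiding the full Malmquist machinery is to identify the one-dimensional orthogonal complement of $\mathrm{span}\{1/(z-\mu_k)\}_{k=1}^q$ inside the enlargement directly as the span of $B_q(z)/(z-\lambda)$, compute its $\mathcal H_2$ norm as $1/\sqrt{1-|\lambda|^2}$ via the all-pass property, and recover the identity by the same residue argument at $z=\lambda$.
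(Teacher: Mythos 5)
Your proposal is correct, and its outer skeleton---restricting $\check R_k=\sum_j \tilde r_{kj}R_j$, applying the triangle inequality with $\|R\alpha\|_2=\|R\|\,\|\alpha\|_2$, and reducing to the scalar single-pole identity $\min_{r}\|1/(z-\lambda)-\sum_k r_k/(z-\mu_k)\|_2=(1-|\lambda|^2)^{-1/2}\prod_k[\lambda,\mu_k]_{\mathrm h}$---is exactly the paper's. Where you genuinely diverge is in proving that identity (the paper's Theorem~\ref{thm:vector_error}): the paper writes the least-squares error as the Schur complement $\phi_{\lambda,\lambda}-p_{\lambda,\boldsymbol\mu}\Xi_{\boldsymbol\mu,\boldsymbol\mu}^{-1}p_{\lambda,\boldsymbol\mu}^H$ of a bordered Gram matrix and evaluates it using the explicit inverse of a Cauchy matrix, whereas you adjoin $\lambda$ as a $(q+1)$-th pole in the Takenaka--Malmquist construction~\eqref{eq:poles_induced_obfs}, identify the one-dimensional residual as a multiple of $\sqrt{1-|\lambda|^2}\,B_q(z)/(z-\lambda)$, and extract the coefficient by matching residues at $z=\lambda$ and invoking the all-pass property $|B_q(e^{i\omega})|=1$. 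Your route is more conceptual: the Blaschke product, and hence the pseudohyperbolic metric, appears for structural reasons rather than emerging from a determinant computation, and it sidesteps the paper's separate continuity argument for the degenerate cases $\lambda=0$ or $\mu_k=0$ (which break the paper's diagonal factorization $\check\Xi=D_{\lambda,\boldsymbol\mu}\Xi_{\lambda,\boldsymbol\mu}$ but cause no difficulty for the Blaschke product); the price is that you must supply the orthonormality and nested-span properties of the Malmquist basis in the exterior-of-disk $\mathcal H_2$ inner product, which the paper states but never proves, and you must handle $\lambda=\mu_{k_0}$ separately (which you do, correctly, by noting both sides vanish). The paper's Gram-matrix route buys one thing yours does not immediately give: as its Remark~\ref{remark:repeated_poles} points out, the Schur-complement formulation extends to repeated poles $1/(z-\lambda)^l$ by differentiating the Cauchy kernel, whereas the residue-matching step would need to be reworked for higher-order poles.
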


To prove Lemma~\ref{thm:system_approx_error}, we first prove the following theorem:
\begin{theorem}\label{thm:vector_error}
    Let scalars $r_1, \cdots, r_q\in\mathbb{C}$. For an arbitrary $\lambda\in\mathbb{D}, \forall \mu_1, \cdots, \mu_q\in\mathbb{D}$ and $\mu_j\neq \mu_k, \forall j\neq k$, then the relative approximation error of $\frac{1}{z-\lambda}$ using the linear combination of $\frac{1}{z-\mu_1}, \cdots, \frac{1}{z-\mu_q}$ satisfies
    \begin{equation}\label{eq:vector_approx_general}
        \min_{r_1, \cdots, r_q}\frac{\left\|\frac{1}{z-\lambda}-\sum_{k=1}^{q}\frac{r_k}{z-\mu_k}\right\|_2}{\left\|\frac{1}{z-\lambda}\right\|_2}=\prod_{k=1}^q [\lambda, \mu_k]_{\mathrm{h}},
    \end{equation}
    where $[\cdot, \cdot]_{\mathrm{h}}$ denotes the pseudohyperbolic metric defined in~\eqref{eq:pseudohyperbolic_metric}.
\end{theorem}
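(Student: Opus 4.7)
The plan is to compute the minimum in~\eqref{eq:vector_approx_general} by expressing both the target $1/(z-\lambda)$ and the approximating subspace in a common $\mathcal{H}_2$-orthonormal basis, so that the residual collapses onto a single basis direction. Concretely, using the unified Takenaka--Malmquist construction~\eqref{eq:poles_induced_obfs}, I would take $\mathcal{V}_1,\ldots,\mathcal{V}_q$ corresponding to the nodes $\mu_1,\ldots,\mu_q$ and adjoin one additional basis function
\[
\mathcal{V}_{q+1}(z)=\frac{\sqrt{1-|\lambda|^2}}{z-\lambda}\prod_{\ell=1}^{q}\frac{1-\bar{\mu}_\ell z}{z-\mu_\ell}
\]
associated with the extra node $\lambda$. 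The enlarged family $\mathcal{V}_1,\ldots,\mathcal{V}_{q+1}$ remains $\mathcal{H}_2$-orthonormal, and because $\lambda\neq\mu_k$ for every $k$, its span has dimension $q+1$ and therefore coincides with $\mathrm{span}\{1/(z-\mu_1),\ldots,1/(z-\mu_q),1/(z-\lambda)\}$; in particular, $1/(z-\lambda)$ admits an expansion $1/(z-\lambda)=\sum_{k=1}^{q+1}\alpha_k\mathcal{V}_k(z)$.

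Since $\mathrm{span}\{\mathcal{V}_k\}_{k=1}^q=\mathrm{span}\{1/(z-\mu_k)\}_{k=1}^q$, the orthonormality of the extended basis immediately implies that the best approximation sought in~\eqref{eq:vector_approx_general} is $\sum_{k=1}^q\alpha_k\mathcal{V}_k$ and that the minimal residual has $\mathcal{H}_2$-norm exactly $|\alpha_{q+1}|$. To evaluate $\alpha_{q+1}$, I would exploit the fact that among all the basis functions only $\mathcal{V}_{q+1}$ carries a pole at $\lambda$, and compare residues at $z=\lambda$ on both sides of the expansion; the residue of $\mathcal{V}_{q+1}$ at $\lambda$ is $\sqrt{1-|\lambda|^2}\prod_{\ell=1}^{q}\frac{1-\bar{\mu}_\ell\lambda}{\lambda-\mu_\ell}$, while the left-hand side has residue $1$, yielding
\[
\alpha_{q+1}=\frac{1}{\sqrt{1-|\lambda|^2}}\prod_{\ell=1}^{q}\frac{\lambda-\mu_\ell}{1-\bar{\mu}_\ell\lambda}.
\]
Taking absolute values and dividing by $\|1/(z-\lambda)\|_2=1/\sqrt{1-|\lambda|^2}$ then reproduces the claimed identity $\prod_{k=1}^{q}[\lambda,\mu_k]_{\mathrm h}$.

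The only non-routine step is verifying the $\mathcal{H}_2$-orthonormality of the extended family $\{\mathcal{V}_k\}_{k=1}^{q+1}$, i.e., that adjoining the node $\lambda$ preserves orthogonality against every earlier $\mathcal{V}_j$. I would pull each cross inner product back to a contour integral on $|z|=1$ and use that each factor $(1-\bar{\mu}_\ell z)/(z-\mu_\ell)$ is unimodular on the unit circle; the Blaschke factors with indices common to $\mathcal{V}_j$ and $\mathcal{V}_{q+1}$ then cancel, and the remaining rational integrand has poles only at the reciprocals $1/\bar{\mu}_\ell$ and $1/\bar{\lambda}$, all of which lie strictly outside the closed unit disk, so Cauchy's theorem forces the integral to vanish. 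Once this orthonormality is secured, the residue matching above closes the proof.
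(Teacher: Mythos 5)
Your argument is correct, but it takes a genuinely different route from the paper. The paper works with the Gram matrix of the kernels $1/(z-\mu_k)$: it writes the squared least-squares error as the Schur complement $\phi_{\lambda,\lambda}-p_{\lambda,\boldsymbol{\mu}}\Xi_{\boldsymbol{\mu},\boldsymbol{\mu}}^{-1}p_{\lambda,\boldsymbol{\mu}}^{H}$, recognizes the bordered Gram matrix as a diagonally scaled Cauchy matrix, and reads off the answer from the explicit Cauchy-matrix inverse; it must then patch the degenerate case $\lambda=0$ or $\mu_k=0$ (where the diagonal scaling $\mathrm{diag}(1/\lambda,1/\mu_1,\ldots)$ is undefined) by a separate continuity argument. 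Your Takenaka--Malmquist route sidesteps both the determinant computation and that degeneracy: once the extended family $\{\mathcal{V}_k\}_{k=1}^{q+1}$ is orthonormal, the minimal residual is $|\alpha_{q+1}|$ by Parseval, and your residue matching at $z=\lambda$ correctly yields $\alpha_{q+1}=(1-|\lambda|^2)^{-1/2}\prod_{\ell}(\lambda-\mu_\ell)/(1-\bar\mu_\ell\lambda)$ (equivalently, $\alpha_{q+1}=\langle 1/(z-\lambda),\mathcal{V}_{q+1}\rangle$). Two small points to tighten: (i) in the orthogonality check, conjugate the \emph{higher-indexed} function so that after Blaschke cancellation the integrand's poles all sit at the reciprocals $1/\bar\mu_\ell$, $1/\bar\lambda$ outside the closed disk --- conjugating the other one leaves poles inside; (ii) the statement allows $\lambda=\mu_k$, where your dimension-$(q+1)$ and residue arguments break down, so dispatch that case separately (both sides are trivially zero), as the paper does. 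One thing the paper's Gram-matrix formulation buys, per its Remark on repeated poles, is a direct extension to approximating $1/(z-\lambda)^l$ by differentiating the kernel; your approach would need confluent Blaschke products there, though it is otherwise the cleaner and more conceptual argument.
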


\begin{proof}
    The case where $\exists k, \lambda=\mu_k$ is trivially true since both sides of~\eqref{eq:vector_approx_general} are $0$. Thus, we only consider the situation when $\lambda\neq \mu_k, \forall k=1, \cdots, q$.
    We first consider the case where $\lambda\neq 0$, and $\mu_k\neq 0, \forall k=1, \cdots, q$. The least-squares error can be written as:
    \begin{equation}\label{eq:vector_approx_error}
        \min_{r_1, \cdots, r_q}\left\|\frac{1}{z-\lambda}-\sum_{k=1}^q \frac{r_k}{z-\mu_k}\right\|_2^2=\phi_{\lambda, \lambda}-p_{\lambda, \boldsymbol{\mu}}\Xi_{\boldsymbol{\mu}, \boldsymbol{\mu}}^{-1}p_{\lambda, \boldsymbol{\mu}}^H,
    \end{equation}
    where
    \[\phi_{\lambda, \lambda}=\langle\frac{1}{z-\lambda}, \frac{1}{z-\lambda}\rangle=\frac{1}{1-|\lambda|^2}, \]
    and similarly,
    \[p_{\lambda, \boldsymbol{\mu}}=\begin{bmatrix}
        \frac{1}{1-\lambda\bar\mu_1} & \frac{1}{1-\lambda\bar\mu_2 } & \cdots & \frac{1}{1-\lambda\bar\mu_q}
    \end{bmatrix},\]
    and $\Xi_{\boldsymbol{\mu}, \boldsymbol{\mu}}$ is defined in~\eqref{eq:xi_mu_mu}. Denote
    \[ \check \Xi\triangleq \begin{bmatrix}
        \phi_{\lambda, \lambda} & p_{\lambda, \boldsymbol{\mu}} \\ p_{\lambda, \boldsymbol{\mu}}^H & \Xi_{\boldsymbol{\mu}, \boldsymbol{\mu}}
    \end{bmatrix}.\]
    Then one can verify that
    \begin{equation}\label{eq:check_Xi}
        \check\Xi=D_{\lambda, \boldsymbol{\mu}}\Xi_{\lambda, \boldsymbol{\mu}},
    \end{equation}
    where
    \[D_{\lambda, \boldsymbol{\mu}}=\text{diag}\left(\frac{1}{\lambda}, \frac{1}{\mu_1}, \cdots, \frac{1}{\mu_q}\right),\]
    \[\Xi_{\lambda, \boldsymbol{\mu}}=\begin{bmatrix} \frac{1}{1/\lambda-\bar\lambda} & \frac{1}{1/\lambda-\bar\mu_1} & \cdots & \frac{1}{1/\lambda-\bar\mu_q} \\
    \frac{1}{1/\mu_1-\bar\lambda} & \frac{1}{1/\mu_1-\bar\mu_1} & \cdots & \frac{1}{1/\mu_1-\bar\mu_q} \\
    \vdots & \vdots & \ddots & \vdots \\
    \frac{1}{1/\mu_q-\bar\lambda} & \frac{1}{1/\mu_q-\bar\mu_1} & \cdots & \frac{1}{1/\mu_q-\bar\mu_q}\end{bmatrix}.\]
    Notice that $\Xi_{\lambda, \boldsymbol{\mu}}$ is a Cauchy matrix, and is invertible since $\mu_k$ are distinct from each other and $\lambda\neq \mu_k$. Moreover, by block matrix inversion lemma, the top left entry of $\check\Xi^{-1}$
    is $[\check\Xi^{-1}]_{1, 1}=(\phi_{\lambda, \lambda}-p_{\lambda, \boldsymbol{\mu}}\Xi_{\boldsymbol{\mu}, \boldsymbol{\mu}}^{-1} p_{\lambda, \boldsymbol{\mu}}^H)^{-1}$. Thus, we can obtain the LHS in the theorem by considering the inversion of $\check \Xi$:
    \begin{equation}\label{eq:epsilon_lambda}
        \begin{aligned}
            &\phi_{\lambda, \lambda}-p_{\lambda, \boldsymbol{\mu}}\Xi_{\boldsymbol{\mu}, \boldsymbol{\mu}}^{-1} p_{\lambda, \boldsymbol{\mu}}^H \\
            &=\frac{\prod_{k=1}^q\left(1/\lambda-1/\mu_k\right)\left(\bar\mu_k-\bar\lambda\right)}{(1-|\lambda|^2)\prod_{k=1}^q (1/\lambda-\bar\mu_k)(1/\mu_k-\bar\lambda)} \\
            &=\frac{1}{1-|\lambda|^2}\prod_{k=1}^q\left|\frac{\mu_k-\lambda}{1-\bar\mu_k\lambda}\right|^2.
        \end{aligned}
    \end{equation}
    Equation~\eqref{eq:epsilon_lambda} leverages the explicit form of the Cauchy matrix inversion~\cite{knuth1997art}.

Next, we consider the case where $\lambda=0$ or $\mu_k=0$, but $\lambda$ and $\mu_k$ are still distinct from each other. In this case, the decomposition~\eqref{eq:check_Xi} no longer exists. However, one can prove that the matrix $\Xi_{\boldsymbol{\mu}, \boldsymbol{\mu}}$ and $\check\Xi$ are still invertible. Thus, the error equation~\eqref{eq:vector_approx_error} still holds. Moreover, since matrix inverse $\check\Xi^{-1}$ is differentiable w.r.t. matrix elements, we can see that $[\check\Xi^{-1}]_{1, 1}$ is \emph{continuous} w.r.t. $\lambda$ and $\mu_k$. Thus, the result in~\eqref{eq:epsilon_lambda} still holds when $\lambda=0$ or $\mu_k=0$.
\end{proof}
\vspace{-\baselineskip}%
\begin{remark}
    \res{Equation~(4.48) in~\cite{van_den_hof_system_2005} provides a pointwise bound on the frequency-response error
    $|G_e(e^{i\omega})|$ on the unit
    circle. Integrating this pointwise bound over the unit
    circle yields an $\mathcal{H}_2$-type bound that is equivalent to
    Lemma~\ref{thm:system_approx_error}. Related 
    continuous-time variants are also discussed in~\cite{Brinsmead2001}.} Here, we introduce an alternative proof that utilizes the inner product and the inversion of the Cauchy matrix to derive the discrete-time result.
\end{remark}

\begin{remark}\label{remark:repeated_poles}
    The approximation bias of $1/(z-\lambda)^l, l>1$ using $1/(z-\mu_k), k=1, \cdots, q$ can be similarly derived using this method. Essentially, the difference lies in the form of the covariance matrix $\check \Xi$, which also contains the partial derivative of $\lambda$ to the current $\check \Xi$. Thus, the explicit form of the matrix inverse $\check \Xi^{-1}$ and the resulting approximation error may be further derived for a fixed $l$.
\end{remark}

Next, we are ready to prove Lemma~\ref{thm:system_approx_error}.
\begin{proof}
    The objective function of Problem~\ref{prob:system_identification} has the following upper bound:
    \begin{equation}\label{eq:optimize_H2_detail}
        \begin{aligned}
            &\left\|G_e(z)\right\|_2 \\
            &\leq \min_{\tilde r_{11}, \cdots, \tilde r_{1n}, \cdots, \tilde r_{qn}}\sum_{j=1}^n \left(\|R_j\|\left\|\frac{1}{z-\lambda_j}-\sum_{k=1}^q \frac{\tilde r_{kj}}{z-\mu_k}\right\|_2\right),
        \end{aligned}
    \end{equation}
    where $\tilde r_{kj}$ are scalar optimization variables. The inequality holds by restricting $\check R_k=\sum_{j=1}^n \tilde r_{kj}R_j$.

    Note that the optimization variables $\tilde r_{kj}$ are independent of each other. Thus, we can minimize each term for a given $j$ separately. By Theorem~\ref{thm:vector_error}, for a given $j=1, \cdots, n$,
    \begin{equation}
        \begin{aligned}
            \min_{\tilde r_{1j}, \cdots, \tilde r_{qj}}&\left\|\frac{1}{z-\lambda_j}-\sum_{k=1}^q\frac{\tilde r_{kj}}{z-\mu_k}\right\|_2 \\
            &\qquad\qquad\qquad=\frac{1}{\sqrt{1-|\lambda_j|^2}}\left(\prod_{k=1}^q [\lambda_j, \mu_k]_{\mathrm{h}}\right).
        \end{aligned}
    \end{equation}

    Thus, the approximation error in Lemma~\ref{thm:system_approx_error} satisfies
    \begin{equation*}
        \begin{aligned}
            \|G_e(z)\|_2\leq \sum_{j=1}^n \frac{\|R_j\|}{\sqrt{1-|\lambda_j|^2}}\left(\prod_{k=1}^q [\lambda_j, \mu_k]_{\mathrm{h}}\right).
        \end{aligned}
    \end{equation*}
\end{proof}
Finally, we prove the results in Theorem~\ref{thm:system_approximation_error}. 
\begin{proof}
By the result in Lemma~\ref{thm:system_approx_error}, we have that
\begin{align}
    &\max_{G\in\mathscr{G}}\min_{\check R_1, \cdots, \check R_q}\left\|G-\sum_{k=1}^q\frac{\check R_k}{z-\mu_k}\right\|_2\nonumber \\
    &\leq \left[\max_{\lambda\in\mathcal D}\frac{1}{\sqrt{1-|\lambda|^2}}\left(\prod_{k=1}^q [\lambda, \mu_k]_{\mathrm{h}}\right)\right]\left(\sum_{j=1}^n \|R_j\|\right)\nonumber \\
    &\leq\max_{\lambda\in\mathcal D}\left[\frac{\bar R}{\sqrt{1-|\lambda|^2}}\left(\prod_{k=1}^q [\lambda, \mu_k]_{\mathrm{h}}\right)\right],\label{eq:max_min_error}
\end{align}
where $\bar R\geq 0$ denotes the upper bound on the modified system energy of systems in $\mathscr G$ defined before Theorem~\ref{thm:system_approximation_error}.
Moreover, by choosing
\[G^*(z)=\frac{R^*}{z-\lambda^*}\in\mathscr G,\]
where $R^*=\begin{bmatrix} \bar R & 0 \\ 0 & 0\end{bmatrix}\in\mathbb{R}^{p\times m}$ and $\lambda^*\in\arg\max_{\lambda\in\mathcal D}\frac{1}{\sqrt{1-|\lambda|^2}}\left(\prod_{k=1}^q [\lambda, \mu_k]_{\mathrm{h}}\right)$,
the approximation error in~\eqref{eq:max_min_error} achieves the upper bound:
\[\min_{\check R_1, \cdots, \check R_q}\|G^*-\check G\|_2=\max_{\lambda\in\mathcal D}\frac{\bar R}{\sqrt{1-|\lambda|^2}}\left(\prod_{k=1}^q [\lambda, \mu_k]_{\mathrm{h}}\right),\]
where $\check G(z)=\sum_{k=1}^q \frac{\check R_k}{z-\mu_k}$. Therefore,
\begin{align}
    &\min_{\mu_1, \cdots, \mu_q\in\mathcal D}\max_{G\in\mathscr{G}}\min_{\check R_1, \cdots, \check R_q}\|G-\check G\|_2^{1/q}\nonumber \\
    &=\min_{\mu_1, \cdots, \mu_q\in\mathcal D}\max_{\lambda\in\mathcal D}\frac{\bar R^{1/q}}{(1-|\lambda|^2)^{1/2q}}\left(\prod_{k=1}^q [\lambda, \mu_k]_{\mathrm{h}}\right)^{1/q}.\label{eq:min_max_min_equality}
\end{align}
Moreover, one can verify that
\begin{align}
    &\bar R^{1/q}\min_{\mu_1, \cdots, \mu_q\in\mathcal D}\max_{\lambda\in\mathcal D}\left(\prod_{k=1}^q [\lambda, \mu_k]_{\mathrm{h}}\right)^{1/q}\nonumber \\
    &\leq \min_{\mu_1, \cdots, \mu_q\in\mathcal D}\max_{\lambda\in\mathcal D}\frac{\bar R^{1/q}}{(1-|\lambda|^2)^{1/2q}}\left(\prod_{k=1}^q [\lambda, \mu_k]_{\mathrm{h}}\right)^{1/q}\nonumber \\
    &\leq \frac{\bar R^{1/q}}{(1-\rho_\lambda^2)^{1/2q}}\min_{\mu_1, \cdots, \mu_q\in\mathcal D}\max_{\lambda\in\mathcal D}\left(\prod_{k=1}^q [\lambda, \mu_k]_{\mathrm{h}}\right)^{1/q},\nonumber
\end{align}
where $\rho_\lambda=\max_{\lambda\in\mathcal D}|\lambda|$. By taking limits on all sides of the inequality as $q\to\infty$, and applying the definition of the hyperbolic Chebyshev constant in Definition~\ref{def:chebyshev_constant}, we can prove that
\[\lim_{q\to\infty}\min_{\mu_1, \cdots, \mu_q\in\mathcal D}\max_{G\in\mathscr G}\min_{\check R_1, \cdots, \check R_q}\left\|G-\sum_{k=1}^q\frac{\check R_k}{z-\mu_k}\right\|_2^{1/q}=\tau(\mathcal D).\]
As a result, for any $\nu_1, \cdots, \nu_q\in\mathcal D$,
\begin{align}
    &\liminf_{q\to\infty}\max_{G\in\mathscr G}\min_{\check R_1, \cdots, \check R_q}\left\|G-\sum_{k=1}^q \frac{\check R_k}{z-\nu_k}\right\|_2^{1/q}\nonumber \\
    &\geq \lim_{q\to\infty}\min_{\mu_1, \cdots, \mu_q\in\mathcal D}\max_{G\in\mathscr G}\min_{\check R_1, \cdots, \check R_q}\left\|G-\sum_{k=1}^q\frac{\check R_k}{z-\mu_k}\right\|_2^{1/q}\nonumber \\
    &=\tau(\mathcal D).\nonumber
\end{align}
The second statement in Theorem~\ref{thm:system_approximation_error} can be proved by combining Theorem~\ref{thm:tsuji_asymptotic_optimal} with~\eqref{eq:min_max_min_equality}.
\end{proof}

\section{Conformal Mapping of Real Intervals}\label{append:tsuji_points}
When the region of poles $\mathcal{D}$ is a closed real interval $\mathcal{D}=[-\rho, \rho], \rho<1$, the conformal mapping $g(z)$ that maps the region $\mathbb{D}\backslash \mathcal{D}$ to the annulus $\{w:\tau(\mathcal{D})<|w|<1\}$ can be computed explicitly, where $\tau(\mathcal{D})$ denotes the hyperbolic Chebyshev constant of the closed inverval $\mathcal{D}$.
\begin{theorem}[Conformal mapping from a real interval to an annulus]\label{thm:conformal_mapping}
    The real interval $[-\rho, \rho]$ can be conformally mapped to the annulus $\{w:\tau(\mathcal{D})<|w|<1\}$ by the composition of the following conformal mappings:
    \begin{itemize}
        \item \textbf{M\"obius transform}:
        \begin{equation}\label{eq:mobius_transform}
            m(z)=\frac{z+\rho}{1+\rho z}.
        \end{equation}
        \item \textbf{Schwarz-Christoffel mapping}:
        \begin{equation}\label{eq:schwarz_christoffle_mapping}
            s(z)=\int_0^{\sqrt{\frac{z}{\tilde \rho}}} \frac{1}{\sqrt{(1-w^2)(1-\tilde \rho^2 w^2)}}dw,
        \end{equation}
        where $\tilde \rho=\frac{2\rho}{1+\rho^2}$.
        \item \textbf{Translation and exponential transformations}:
        \[t(z)=-iz+0.5iK(\sqrt{1-\tilde \rho^2})+K(\tilde \rho),\]
        \begin{equation}\label{eq:exp_transform}
            \epsilon(z)=\exp\left(\frac{\pi z}{K(\tilde \rho)}\right).
        \end{equation}
    \end{itemize}
\end{theorem}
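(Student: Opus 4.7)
The plan is to verify that the composition $\Phi = \epsilon \circ t \circ s \circ m$ is a well-defined conformal equivalence between the doubly connected region $\mathbb{D} \setminus [-\rho, \rho]$ and the annulus $\{\tau(\mathcal{D}) < |w| < 1\}$. Since the slit disk is doubly connected, the conformal modulus is unique; thus, once I show $\Phi$ is a conformal bijection onto \emph{some} annulus with inner radius $r$, the value of $r$ is forced and I only need to verify it equals $\tau(\mathcal{D})$ via the explicit formula in Lemma~\ref{lemma:hyperbolic_chebyshev_constant}. I would treat each factor in turn and track the image of the two boundary components (the unit circle $\partial\mathbb{D}$ and the slit $[-\rho,\rho]$) through each map.

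First, I would verify by direct computation that $m$ is a Mobius automorphism of $\mathbb{D}$, maps the real axis to itself, sends $-\rho \mapsto 0$ and $\rho \mapsto \tilde\rho$, and therefore maps $[-\rho,\rho]$ onto $[0,\tilde\rho]$. This reduces the problem to showing that $\epsilon \circ t \circ s$ maps $\mathbb{D} \setminus [0,\tilde\rho]$ conformally onto the target annulus. Second, for the Schwarz--Christoffel step I would introduce the auxiliary variable $u = \sqrt{z/\tilde\rho}$ with a branch cut placed along $[0,\tilde\rho]$. This substitution unfolds the slit disk to one half of a disk; composing with the incomplete elliptic integral of the first kind $F(\cdot,\tilde\rho)$, which is the classical Schwarz--Christoffel map onto a rectangle, I would show that $s$ sends $\mathbb{D}\setminus[0,\tilde\rho]$ conformally onto a rectangle whose side lengths are controlled by $K(\tilde\rho)$ and $K(\sqrt{1-\tilde\rho^2})$, with the image of $\partial\mathbb{D}$ occupying one pair of parallel sides and the image of the slit $[0,\tilde\rho]$ occupying the other pair.

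Third, I would verify that $t(z)=-iz + 0.5\,iK(\sqrt{1-\tilde\rho^2}) + K(\tilde\rho)$ correctly rigidly repositions this rectangle: the factor $-i$ rotates by $-\pi/2$, turning horizontal extents into vertical ones, and the additive constants shift the image so that $\partial\mathbb{D}$ is sent to the segment $\mathrm{Re}(w)=0$ (of height $2K(\tilde\rho)$) and the slit is sent to the parallel segment $\mathrm{Re}(w)=-\tfrac{1}{2}K(\sqrt{1-\tilde\rho^2})$. Finally, $\epsilon(z)=\exp(\pi z/K(\tilde\rho))$ has period $2iK(\tilde\rho)$ in the imaginary direction, matching exactly the vertical extent of the translated rectangle; hence $\epsilon$ wraps the rectangle into an annulus whose outer boundary $|w|=1$ is the image of $\mathrm{Re}(z)=0$ and whose inner boundary has radius $\exp\!\bigl(-\tfrac{\pi}{2}K(\sqrt{1-\tilde\rho^2})/K(\tilde\rho)\bigr)$, which is precisely $\tau(\mathcal{D})$ by Lemma~\ref{lemma:hyperbolic_chebyshev_constant}.

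The main obstacle is the Schwarz--Christoffel step. The slit disk is doubly connected, so there is no univalent map to a simply connected region; the substitution $u=\sqrt{z/\tilde\rho}$ exploits the symmetry of the slit to unfold it, but one has to argue carefully that the branch is globally well defined on $\mathbb{D}\setminus[0,\tilde\rho]$ and that the composition $F(\sqrt{z/\tilde\rho},\tilde\rho)$ \emph{is} bijective onto the claimed rectangle, with the correct correspondence between the two distinct boundary components and the two pairs of rectangle sides. Once this correspondence is established, the remaining steps are routine verifications using standard properties of Mobius, rigid, and exponential maps, together with the closed-form expression for $\tau([-\rho,\rho])$ proved in Appendix~\ref{append:hyperbolic_chebyshev_constant}.
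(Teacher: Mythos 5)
Your proposal follows essentially the same route as the paper's proof: the Mobius map sending $[-\rho,\rho]$ to $[0,\tilde\rho]$, the elliptic-integral (Schwarz--Christoffel) map onto a rectangle with side lengths $K(\tilde\rho)$ and $K(\sqrt{1-\tilde\rho^2})$, a rigid repositioning, and exponentiation, with the inner radius identified as $\exp\bigl(-\tfrac{\pi}{2}K(\sqrt{1-\tilde\rho^2})/K(\tilde\rho)\bigr)=\tau([-\rho,\rho])$ via Lemma~\ref{lemma:hyperbolic_chebyshev_constant}. The only difference is presentational: the paper justifies the rectangle step by reflecting $\mathbb{D}\setminus[0,\tilde\rho]$ across the unit circle into the quadrilateral $Q(\infty,0,\tilde\rho,1/\tilde\rho)$ and citing a known result, whereas you verify the substitution $u=\sqrt{z/\tilde\rho}$ directly; both treatments leave the same boundary-identification detail (the doubly connected domain maps to the rectangle only after a cut, which the exponential then reglues) implicit, so your plan is at the same level of rigor as the paper's.
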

\begin{proof}
    First, the M\"obius transform $m(z)$ defined in~\eqref{eq:mobius_transform} maps the region $\mathbb{D}\backslash\mathcal{D}$ to the region $\mathbb{D}\backslash[0, \frac{2\rho}{1+\rho^2}]=\mathbb{D}\backslash[0, \tilde \rho]$.

    Then, by reflecting the region $\mathbb{D}\backslash [0, \tilde \rho]$ at $|z|=1$, we obtain the quadrilateral $Q(\infty, 0, \tilde \rho, 1/\tilde \rho)$ formed from the upper half-plane. According to~\cite{lehto_verzerrungssatze_1965}, the following Schwarz-Christoffel mapping
    \begin{equation}\label{eq:schwarz_christoffle_mapping_1}
        \tilde s(z)=\frac{1}{2i\sqrt{\tilde \rho}}\int_0^z \frac{dw}{\sqrt{w(w-\tilde \rho)(w-1/\tilde \rho)}}
    \end{equation}
    maps the region $\mathbb{D}\backslash [0, \tilde\rho]$ to the rectangle with the vertices $(0, K(\sqrt{1-\tilde \rho^2}), K(\sqrt{1-\tilde \rho^2})-K(\tilde\rho)i, -K(\tilde\rho)i)$. Note that by substituting $w=\tilde \rho \tilde w^2$ in the integral,~\eqref{eq:schwarz_christoffle_mapping_1} becomes
    \[\tilde s(z)=-i\int_0^{\sqrt{\frac{z}{\tilde \rho}}}\frac{d\tilde w}{\sqrt{(1-\tilde w^2)(1-\tilde \rho^2\tilde w^2)}}.\]
    For the simplicity of the inverse mapping, we move the coefficient $-i$ into the translation step. Thus, the transformation in~\eqref{eq:schwarz_christoffle_mapping} maps the region $\mathbb{D}\backslash [0, \tilde \rho]$ to the rectangle with the vertices $(0, K(\tilde\rho), iK(\sqrt{1-\tilde \rho^2})+K(\tilde\rho), iK(\sqrt{1-\tilde \rho^2}))$.

    To obtain the unique conformal mapping, we ensure $g(1)=1$ by translating the rectangle by $t(z)$ and then applying the exponential transformation $\epsilon(z)$ defined in~\eqref{eq:exp_transform} to obtain the annulus $\{w:\tau(\mathcal{D})<|w|<1\}$.

    The conformal mapping from the region $\mathbb{D}\backslash [-\rho, \rho]$ to the annulus $\{w:\tau([-\rho, \rho])<w<1\}$ is visualized in Fig.~\ref{fig:conformal_mapping}.
\end{proof}
\begin{figure*}[!htbp]
    \centering
    \includegraphics[width=\textwidth]{./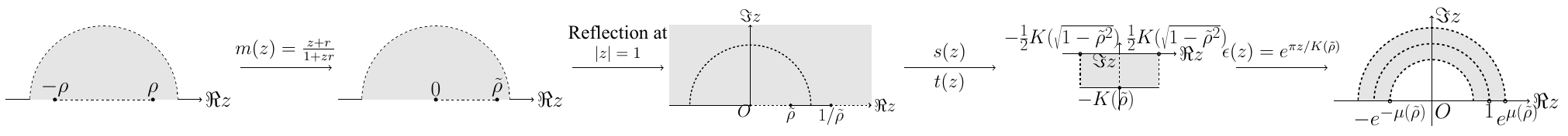}
    \caption{Conformal mapping from the real interval $[-\rho, \rho]$ to the annulus $\{w:\tau([-\rho, \rho])<w<1\}$.}
    \label{fig:conformal_mapping}
\end{figure*}

Therefore, we can find the points $\{\eta_{qk}\}_{k=1}^q$ of the real interval $[-\rho, \rho]$ by the following steps:
\begin{itemize}
    \item Uniformly sample $q$ points from the circle $\{z\in\mathbb{D}\mid |z|=\tau([-\rho, \rho])\}$ on the complex plane.
    \item Transform each point using the following mappings:
    \begin{itemize}
        \item Inverse of \textit{exponential transformation and translation}:
        \[\epsilon^{-1}(z)=\frac{K(\tilde \rho)}{\pi}\log(z),\]
        \[t^{-1}(z)=iz+0.5K(\sqrt{1-\tilde \rho^2})-iK(\tilde \rho).\]
        \item Inverse of the Schwarz-Christoffel mapping:
        \[s^{-1}(z)=\tilde \rho\ \mathrm{sn}(z, \tilde \rho^2)^2, \]
        where $\mathrm{sn}(u, m)$ denotes the \textit{Jacobi elliptic sine function}, which is the inverse function of the incomplete elliptic integral of the first kind.
        \item Inverse of the Mobius transform:
        \[m^{-1}(z)=\frac{z-\rho}{1-z\rho}.\]
    \end{itemize}
\end{itemize}

On the other hand, when the region of poles $\mathcal{D}$ is a continuum on the complex plane, the conformal mapping from the annulus $\{z:\tau(\mathcal{D})<|z|<1\}$ to the region $\mathbb{D}\backslash \mathcal{D}$ can also be numerically computed~\cite{trefethen_numerical_2020}.

\section{Relationship between the state-space model and the transfer function}\label{append:ss_tf}
This section discusses the relationship between the state-space model and the transfer function. First, consider an $n$-dimensional state-space model with $m$ inputs and $p$ outputs satisfying Assumption~\ref{assump:system_stability} and~\ref{assump:bias}:
\begin{equation}\label{eq:true_system_noisy}
    S:\left\{\begin{aligned}
        x_{t+1}&=Ax_t+Bu_t, \\
        y_t &= Cx_t.
    \end{aligned}\right.
\end{equation}
Denote the eigenvalues of $A$ as $\lambda_1, \cdots, \lambda_n$. Since similarity transformation preserves the input-output relationship of the system, we directly assume that the system takes the diagonal canonical form and denote the state-space parameters as:

\begin{proposition}[Markov parameter description and transfer function of a state-space model]\label{prop:markov_parameter_description}
   For a state-space representation along with parameters $A, B, C$, the corresponding transfer function is
   \begin{equation}\label{eq:markov_parameter_ss}
       G(z)=\sum_{j=1}^n \frac{R_j}{z-\lambda_j}.
   \end{equation}
   with $R_j=c_jb_j^\top$,
\end{proposition}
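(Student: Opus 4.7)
The plan is to derive the transfer function of the state-space model~\eqref{eq:true_system_noisy} directly from the defining equations and then use the diagonal structure of $A$ to produce the partial-fraction form asserted in the proposition.

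First I would take the $\mathcal{Z}$-transform of the state equation $x_{t+1}=Ax_t+Bu_t$ and the output equation $y_t=Cx_t$ under zero initial conditions, which yields $zX(z)=AX(z)+BU(z)$ and $Y(z)=CX(z)$. Solving for $X(z)$ and substituting gives the standard input-output relation
\begin{equation}
G(z)=C(zI_n-A)^{-1}B,
\end{equation}
which reduces the proof to computing $(zI_n-A)^{-1}$ explicitly.

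Next, I would exploit the assumed diagonal canonical form $A=\mathrm{diag}(\lambda_1,\ldots,\lambda_n)$. For such an $A$, the resolvent is also diagonal, $(zI_n-A)^{-1}=\mathrm{diag}\bigl(1/(z-\lambda_1),\ldots,1/(z-\lambda_n)\bigr)$, which is well defined at every $z$ outside $\{\lambda_1,\ldots,\lambda_n\}$; by Assumption~\ref{assump:bias}, the poles $\lambda_j$ are distinct, so this representation is unambiguous. Writing $C=[c_1\ c_2\ \cdots\ c_n]$ by columns and $B=[b_1\ \cdots\ b_n]^\top$ by rows, the matrix product becomes a sum of rank-one terms
\begin{equation}
C(zI_n-A)^{-1}B=\sum_{j=1}^n\frac{c_j b_j^\top}{z-\lambda_j},
\end{equation}
and identifying $R_j=c_j b_j^\top$ yields exactly~\eqref{eq:markov_parameter_ss}.

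There is no serious obstacle: the argument is essentially a one-line linear-algebra computation once the state equations are transformed to the frequency domain. The only point deserving a brief remark is the justification that we may restrict to the diagonal canonical form without loss of generality. This follows because any similarity transformation $x_t\mapsto Tx_t$ replaces $(A,B,C)$ by $(TAT^{-1},TB,CT^{-1})$ while leaving $C(zI-A)^{-1}B$ invariant, so starting from a diagonalizable $A$ (guaranteed by Assumption~\ref{assump:bias} on distinct poles) produces the same transfer function as the original realization. I would mention this briefly before carrying out the calculation above.
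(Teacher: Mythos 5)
Your proposal is correct and follows essentially the same route as the paper, which simply notes that the identity follows from $G(z)=C(zI-A)^{-1}B$ together with the diagonal canonical form assumed in the surrounding text; you have merely written out the routine resolvent computation and the similarity-invariance remark explicitly. No gaps.
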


Moreover, given a transfer function with the decomposition $G(z)=\sum_{j=1}^\kappa \frac{R_j}{z-\lambda_j}$, one can find a corresponding state-space realization. Let
\[\varrho\triangleq \sum_{j=1}^\kappa \mathrm{rank}(R_j).\]
\begin{theorem}[State-space realization of Markov parameter description]\label{thm:system_realization}
   The transfer function
   \begin{equation}\label{eq:tf_decomposition_2}
       G(z)=\sum_{j=1}^\kappa \frac{R_j}{z-\lambda_j}
   \end{equation}
   can be realized by a $\varrho$-dimensional state-space model, where $\varrho$ is at most $\min(p, m)\kappa$.
\end{theorem}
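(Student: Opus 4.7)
The plan is to realize $G(z)$ by concatenating a minimal realization for each summand $R_j/(z-\lambda_j)$ and then verifying that the block-diagonal aggregate yields exactly $G(z)$. First I would exploit the fact that $R_j \in \mathbb{C}^{p \times m}$ has rank $r_j \triangleq \mathrm{rank}(R_j)$, and perform a full-rank factorization $R_j = \mathcal{C}_j \mathcal{B}_j$ where $\mathcal{C}_j \in \mathbb{C}^{p \times r_j}$ and $\mathcal{B}_j \in \mathbb{C}^{r_j \times m}$. Such a factorization exists by elementary linear algebra (e.g., taking $\mathcal{C}_j$ to be a basis of the column space of $R_j$ and reading off the coordinates for $\mathcal{B}_j$).

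Next I would build, for each $j$, the local state-space quadruple $(A_j, B_j, C_j) = (\lambda_j I_{r_j},\,\mathcal{B}_j,\,\mathcal{C}_j)$. A direct computation gives the local transfer function
\[
C_j(zI_{r_j} - A_j)^{-1} B_j \;=\; \frac{\mathcal{C}_j \mathcal{B}_j}{z-\lambda_j} \;=\; \frac{R_j}{z-\lambda_j}.
\]
I would then aggregate these blocks as
\[
A = \mathrm{diag}(\lambda_1 I_{r_1}, \ldots, \lambda_\kappa I_{r_\kappa}), \quad
B = \begin{bmatrix} \mathcal{B}_1 \\ \vdots \\ \mathcal{B}_\kappa \end{bmatrix}, \quad
C = \begin{bmatrix} \mathcal{C}_1 & \cdots & \mathcal{C}_\kappa \end{bmatrix},
\]
whose dimension is $\sum_{j=1}^\kappa r_j = \varrho$. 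Because $A$ is block diagonal, $(zI - A)^{-1}$ is also block diagonal, and summing the block contributions recovers $C(zI-A)^{-1}B = \sum_{j=1}^\kappa R_j/(z-\lambda_j) = G(z)$, as required.

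Finally, the dimension bound $\varrho \leq \min(p,m)\kappa$ follows immediately from the elementary inequality $\mathrm{rank}(R_j) \leq \min(p,m)$ applied to each of the $\kappa$ summands. There is no real obstacle in this argument; the only point that requires a bit of care is to ensure that the full-rank factorization $R_j = \mathcal{C}_j \mathcal{B}_j$ is performed with exactly $r_j$ inner columns so that the resulting realization attains dimension $\varrho$ and not a larger one.
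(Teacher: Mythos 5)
Your proposal is correct and follows essentially the same route as the paper: the paper's rank-one decomposition $R_j=\sum_{k=1}^{\ell_j} c_k^{(j)}(b_k^{(j)})^\top$ obtained via SVD is precisely your full-rank factorization $R_j=\mathcal{C}_j\mathcal{B}_j$ written column-by-column, and the block-diagonal realization with $\lambda_j$ repeated $\mathrm{rank}(R_j)$ times, together with the bound $\mathrm{rank}(R_j)\leq\min(p,m)$, is identical in both arguments.
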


\begin{proof}
    Equation~\eqref{eq:markov_parameter_ss} in Proposition~\ref{prop:markov_parameter_description} follows directly from the definition of transfer functions $G(z)=C(zI-A)^{-1}B$.

    The rest of the proof is devoted to the state-space realization in Theorem~\ref{thm:system_realization}. Let
    \[\ell_j\triangleq \mathrm{rank}(R_j), j=1, \cdots, n.\]
    Then, for each $j=1, \cdots, n$, $R_j$ can be decomposed as:
    \[R_j=c_1^{(j)}(b_1^{(j)})^\top+\cdots+c_{\ell_j}^{(j)}(b_{\ell_j}^{(j)})^\top, c_k^{(j)}\in\mathbb{C}^{p}, b_k^{(j)}\in\mathbb{C}^{m},\]
    where each product $c_k^{(j)}(b_k^{(j)})^\top$ and the corresponding vector $c_k^{(j)}, b_k^{(j)}$ can be computed via the Singular Value Decomposition (SVD) of $R_j$. Then, one can verify that the transfer function~\eqref{eq:tf_decomposition_2} can be realized by the following $\varrho$-dimensional state-space model:
    \[A=\mathrm{diag}(\underbrace{\lambda_1, \cdots, \lambda_1}_{\ell_1\ \text{times}}, \cdots, \underbrace{\lambda_n, \cdots, \lambda_n}_{\ell_n\ \text{times}}), \]
    \[B=\begin{bmatrix}
        b_1^{(1)} & \cdots & b_{\ell_1}^{(1)} & \cdots & b_1^{(n)} & \cdots & b_{\ell_n}^{(n)}
    \end{bmatrix}^\top,\]
    \[ C=\begin{bmatrix}
        c_1^{(1)} & \cdots & c_{\ell_1}^{(1)} & \cdots & c_1^{(n)} & \cdots & c_{\ell_n}^{(n)}
    \end{bmatrix}.\]
\end{proof}
\section{Proof of Theorem~\ref{thm:fundamental_limit} and Corollary~\ref{cor:fundamental_limit_ss}}\label{append:fundamental_limit}
We first prove Theorem~\ref{thm:fundamental_limit}.
\begin{proof}
    First, we derive the KL divergence between the two hypotheses. If the samples $\{u_{t-1}, y_t\}_{t=1}^N$ are from the true system $G$, since we assume that the past inputs $u_{-1}, u_{-2}, \cdots$ are $\boldsymbol{0}$, using the expansion $G(z)=H_1z^{-1}+H_2z^{-2}+\cdots$ where $H_k$ are the Markov parameters of the system, we have that
    \[y_t=\sum_{l=1}^{t}H_l u_{t-l}+v_t.\]
    Let $\mathcal{F}_N$ denote the $\sigma$-algebra generated by the inputs $u_0, \cdots, u_{N-1}$.
    If the sample trajectories are from the true system, then
    \begin{equation}\label{eq:true_system_distribution}
        \begin{bmatrix}
            y_1^\top & y_2^\top & \cdots & y_N^\top
        \end{bmatrix}^\top\left|{\mathcal{F}_N}\right.\sim\mathcal{N}\left(\mathcal{G}_N\mathcal{U}_N, I_N\otimes\mathcal{R}\right),
    \end{equation}
    where
    \[\mathcal{G}_N=\begin{bmatrix}
        H_1 & 0 & \cdots & 0 \\
        H_2 & H_1 & \cdots & 0 \\
        \vdots & \vdots & \ddots & \vdots \\
        H_N & H_{N-1} & \cdots & H_1
    \end{bmatrix}, \mathcal{U}_N=\begin{bmatrix}
        u_0 \\ u_1 \\ \vdots \\ u_{N-1}
    \end{bmatrix}.\]
    On the other hand, if the samples are from the surrogate system $\check G$, then
    \begin{equation}\label{eq:surrogate_system_distribution}
        \begin{bmatrix}
            y_1^\top & y_2^\top & \cdots & y_N^\top
        \end{bmatrix}^\top \left|\mathcal{F}_N\right.\sim\mathcal{N}\left(\mathcal{\check G}_N\mathcal{U}_N, I_N\otimes \mathcal{R}\right),
    \end{equation}
    where $\mathcal{\check G}_N$ is similarly defined as $\mathcal{G}_N$, but is composed of $\check H_1, \cdots, \check H_N$ instead.

    Let
    \[L_N=\log\frac{f_{G}(y_1, \cdots, y_N, u_0, \cdots, u_{N-1})}{f_{\check G}(y_1, \cdots, y_N, u_0, \cdots, u_{N-1})},\]
    where $f_{G}$ denotes the probability density function of $\{y_t, u_{t-1}\}_{t=1}^N$ if the samples are from the true system $G$, and the notation $f_{\check G}$ is similarly defined. Then, the KL divergence between the two hypotheses in~\eqref{eq:hypothesis} can be computed as:
    \begin{equation}
        \begin{aligned}
            D_{\mathrm{KL}}(\mathbb{P}_0\|\mathbb{P}_1)&=D_{\mathrm{KL}}(\mathbb{P}_1\|\mathbb{P}_0)=\mathbb{E}(L_N)=\mathbb{E}[\mathbb{E}(L_N|\mathcal{F}_N)] \\
            &=\frac{1}{2}\mathbb{E}[(\Delta\mathcal{G}_N\mathcal{U}_N)^\top (I_N\otimes\mathcal{R}^{-1})\Delta\mathcal{G}_N\mathcal{U}_N],
        \end{aligned}
    \end{equation}
    where $\Delta\mathcal{G}_N=\mathcal{G}_N-\mathcal{\check G}_N$. Since $I_N\otimes \mathcal{R}^{-1}\preceq I_{Np}\otimes \|\mathcal{R}^{-1}\|$, we can further bound the KL divergence as:
    \begin{equation}
        \begin{aligned}
            &D_{\mathrm{KL}}(\mathbb{P}_0\|\mathbb{P}_1)\leq \frac{\|\mathcal{R}^{-1}\|}{2}\mathrm{tr}(\Delta\mathcal{G}_N\mathbb{E}(\mathcal{U}_N\mathcal{U}_N^\top) \Delta\mathcal{G}_N^\top) \\
            &=\frac{\|\mathcal{R}^{-1}\|\mathrm{ess}\sup_{\omega}\|\Phi_u(\omega)\|}{2}\mathrm{tr}(\Delta\mathcal{G}_N\Delta\mathcal{G}_N^\top) \\
            &\leq \frac{\|\mathcal{R}^{-1}\|\mathrm{ess}\sup_{\omega}\|\Phi_u(\omega)\|}{2}\left(\sum_{t=1}^N\|H_t-\check H_t\|^2\right)N \\
            &\leq \frac{\|\mathcal{R}^{-1}\|\mathrm{ess}\sup_{\omega}\|\Phi_u(\omega)\|}{2}\|G-\check G\|^2N \\
            &\leq \frac{\bar R^2\|\mathcal{R}^{-1}\|\mathrm{ess}\sup_{\omega}\|\Phi_u(\omega)\|}{2(1-\rho_\lambda^2)}\tau_{n}(\mathcal{D})^{2 n}N,
        \end{aligned}
    \end{equation}
    where the last step leverages the result in Lemma~\ref{thm:system_approx_error} and the definition of the finite hyperbolic Chebyshev constant. Therefore, to ensure that the KL divergence is no smaller than $\delta$, the sample complexity $N$ should satisfy
    \[N\geq \frac{2\delta(1-\rho_\lambda^2)}{\bar R^2\|\mathcal{R}^{-1}\|\mathrm{ess}\sup_{\omega}\|\Phi_u(\omega)\|}\tau_{n}(\mathcal{D})^{-2 n}.\]

\end{proof}
Next, we focus on Corollary~\ref{cor:fundamental_limit_ss}. By Theorem~\ref{thm:fundamental_limit}, a system $\check G(z)$ with $\underline{n}$ distinct poles can be constructed using the method in Section~\ref{sec:fundamental_limit}, such that the number of samples required to distinguish the true system $G(z)$ from the surrogate system $\check G(z)$ satisfies
\[
N \geq \frac{2\delta(1-\rho_\lambda^2)}{\bar{R}^2 \|\mathcal{R}^{-1}\| \operatorname{ess}\sup_{\omega} \|\Phi_u(\omega)\|} \tau_{\underline n}(\mathcal{D})^{-2 \underline{n}}.
\]
Using Theorem~\ref{thm:system_realization}, the system $\check G(z)$ has a state-space realization $(A, B, C)$ of dimension at most 
\[
\check{\underline{n}} \triangleq \min(p, m) \underline{n} \leq n.
\]
We can extend this realization to dimension $n$ by appending zeros:
\[
\check{A} = 
\begin{bmatrix} 
A & 0 \\ 
0 & 0 
\end{bmatrix} 
\in \mathbb{C}^{n \times n}, \quad 
\check{B} = 
\begin{bmatrix} 
B \\ 
0 
\end{bmatrix} 
\in \mathbb{C}^{n \times m}, \quad 
\]
\[\check{C} = 
\begin{bmatrix} 
C & 0 
\end{bmatrix} 
\in \mathbb{C}^{p \times n}.\]
Thus, the system $(\check{A}, \check{B}, \check{C})$ is an $n$-dimensional system that satisfies Corollary~\ref{cor:fundamental_limit_ss}.

\bibliography{ref}

@article{stiemer_approximation_2005,
	title = {On the approximation order of extremal point methods for hyperbolic minimal energy problems},
	volume = {99},
	issn = {0945-3245},
	url = {https://doi.org/10.1007/s00211-004-0565-2},
	doi = {10.1007/s00211-004-0565-2},
	abstract = {Let Γ be an analytic Jordan curve in the unit disk We regard the hyperbolic minimal energy problem where (Γ) denotes the set of all probability measures on Γ. There exist several extremal point discretizations of μ*, among others introduced by M. Tsuji (Tsuji points) or by K. Menke (hyperbolic Menke points). In the present article, it is proven that hyperbolic Menke points approach the images of roots of unity under a conformal map from onto Ω geometrically fast if the number of points tends to infinity. This establishes a conjecture of K. Menke. In particular, explicit bounds for the approximation error are given. Finally, an effective method for the numerical determination of μ* providing a geometrically shrinking error bound is presented.},
	language = {en},
	number = {3},
	urldate = {2024-01-19},
	journal = {Numerische Mathematik},
	author = {Stiemer, Marcus},
	month = jan,
	year = {2005},
	keywords = {Approximation Error, Mathematical Method, Minimal Energy, Present Article, Probability Measure},
	pages = {533--555},
	file = {s00211-004-0565-2.pdf:/home/jiayun/Downloads/s00211-004-0565-2.pdf:application/pdf},
}

@INPROCEEDINGS{ninness_unifying_1997_cdc,
  author={Ninness, B.M. and Gustafsson, F.},
  booktitle={Proceedings of 1994 33rd IEEE Conference on Decision and Control}, 
  title={A unifying construction of orthonormal bases for system identification}, 
  year={1994},
  volume={4},
  number={},
  pages={3388-3393 vol.4},
  keywords={System identification;Finite impulse response filter;Frequency response;Parameter estimation;Least squares approximation;Linear regression;Transfer functions;Electric variables measurement;Noise measurement;Resonance},
  doi={10.1109/CDC.1994.411668}}

@article{ninness_unifying_1997,
	title = {A unifying construction of orthonormal bases for system identification},
	volume = {42},
	issn = {1558-2523},
	url = {https://ieeexplore.ieee.org/abstract/document/566661},
	doi = {10.1109/9.566661},
	abstract = {This paper develops a general and very simple construction for complete orthonormal bases for system identification. This construction provides a unifying formulation of many previously studied orthonormal bases since the common FIR and recently popular Laguerre and two-parameter Kautz model structures are restrictive special cases of the construction presented here. However, in contrast to these special cases, the basis vectors in the unifying construction of this paper can have arbitrary placement of pole position according to the prior information the user wishes to inject. Results characterizing the completeness of the bases and the accuracy properties of models estimated using the bases are provided.},
	number = {4},
	urldate = {2024-01-09},
	journal = {IEEE Transactions on Automatic Control},
	author = {Ninness, B. and Gustafsson, F.},
	month = apr,
	year = {1997},
	pages = {515--521},
	file = {IEEE Xplore Full Text PDF:files/1964/Ninness 和 Gustafsson - 1997 - A unifying construction of orthonormal bases for s.pdf:application/pdf},
}

@book{knuth1997art,
author = {Knuth, Donald E.},
title = {The Art of Computer Programming, Volume 1 (3rd Ed.): Fundamental Algorithms},
year = {1997},
isbn = {0201896834},
publisher = {Addison Wesley Longman Publishing Co., Inc.},
address = {USA}
}

@article{fu1993optimum,
  title={An optimum time scale for discrete Laguerre network},
  author={Fu, Ye and Dumont, Guy Albert},
  journal={IEEE Transactions on Automatic control},
  volume={38},
  number={6},
  pages={934--938},
  year={1993},
  publisher={IEEE}
}

@article{Brinsmead2001,
   abstract = {This note provides a bridge between the H∞ literature and that of fundamental limits. By applying operator theoretic tools developed for the standard frequency domain, model-matching approach to sensitivity minimization, explicit closed form expressions are given for tight bounds on the H∞ norm of weighted sensitivity functions for output feedback control of linear multivariable systems.},
   author = {T. S. Brinsmead and G. C. Goodwin},
   doi = {10.1109/9.948483},
   issn = {00189286},
   issue = {9},
   journal = {IEEE Transactions on Automatic Control},
   keywords = {H-infinity optimization,Linear multivariable analysis,Performance limits},
   month = {9},
   pages = {1486-1489},
   title = {Fundamental limits in sensitivity minimization: Multiple-input-multiple-output (MIMO) plants},
   volume = {46},
   year = {2001},
}

@article{tsuji1950metrical,
	title        = {{Some metrical theorems on Fuchsian groups}},
	author       = {Masatsugu Tsuji},
	year         = 1950,
	journal      = {Kodai Mathematical Seminar Reports},
	publisher    = {Tokyo Institute of Technology, Department of Mathematics},
	volume       = 2,
	number       = {4-5},
	pages        = {89 -- 93},
	doi          = {10.2996/kmj/1138833951},
	url          = {https://doi.org/10.2996/kmj/1138833951}
}

@ARTICLE{liu2020online,
  author={Liu, Hanxiao and Mo, Yilin and Yan, Jiaqi and Xie, Lihua and Johansson, Karl H.},
  journal={IEEE Transactions on Automatic Control}, 
  title={An Online Approach to Physical Watermark Design}, 
  year={2020},
  volume={65},
  number={9},
  pages={3895-3902},
  doi={10.1109/TAC.2020.2971994}}

@incollection{lehto_verzerrungssatze_1965,
	title        = {Verzerrungssätze für quasikonforme {Abbildungen}},
	author       = {Lehto, O. and Virtanen, K. I.},
	year         = 1965,
	booktitle    = {Quasikonforme {Abbildungen}},
	publisher    = {Springer Berlin Heidelberg},
	address      = {Berlin, Heidelberg},
	pages        = {54--113},
	doi          = {10.1007/978-3-662-42594-7_3},
	isbn         = {978-3-662-42594-7},
	url          = {https://doi.org/10.1007/978-3-662-42594-7_3},
	abstract     = {Das vorliegende Kapitel II schließt sich methodisch eng an das vorige an. Auch hier beruht die Darstellung auf der in I.3.2 angegebenen geometrischen Definition der Quasikonformität, und die Charakterisierung des Moduls mit Hilfe extremaler Längen spielt eine wichtige Rolle.}
}

@article{wahlberg_system_1991,
	title = {System identification using {Laguerre} models},
	volume = {36},
	issn = {00189286},
	url = {http://ieeexplore.ieee.org/document/76361/},
	doi = {10.1109/9.76361},
	language = {en},
	number = {5},
	urldate = {2024-01-09},
	journal = {IEEE Transactions on Automatic Control},
	author = {Wahlberg, B.},
	month = may,
	year = {1991},
	pages = {551--562},
	file = {Wahlberg - 1991 - System identification using Laguerre models.pdf:files/1961/Wahlberg - 1991 - System identification using Laguerre models.pdf:application/pdf},
}

@inproceedings{chen_regularized_2015,
	title = {Regularized system identification using orthonormal basis functions},
	url = {https://ieeexplore.ieee.org/abstract/document/7330716},
	doi = {10.1109/ECC.2015.7330716},
	abstract = {Most of existing results on regularized system identification focus on regularized impulse response estimation. Since the impulse response model is a special case of orthonormal basis functions, it is interesting to consider if it is possible to tackle the regularized system identification using more compact orthonormal basis functions. In this paper, we explore two possibilities. First, we construct reproducing kernel Hilbert space of impulse responses by orthonormal basis functions and then use the induced reproducing kernel for the regularized impulse response estimation. Second, we extend the regularization method from impulse response estimation to the more general orthonormal basis functions estimation. For both cases, the poles of the basis functions are treated as hyper-parameters and estimated by empirical Bayes method. Then we further show that the former is a special case of the latter, and more specifically, the former is equivalent to ridge regression of the coefficients of the orthonormal basis functions.},
	urldate = {2024-01-09},
	booktitle = {2015 {European} {Control} {Conference} ({ECC})},
	author = {Chen, Tianshi and Ljung, Lennart},
	month = jul,
	year = {2015},
	pages = {1291--1296},
	file = {IEEE Xplore Full Text PDF:files/1978/Chen 和 Ljung - 2015 - Regularized system identification using orthonorma.pdf:application/pdf},
}

@ARTICLE{Wahlberg1994system,
  author={Wahlberg, B.},
  journal={IEEE Transactions on Automatic Control}, 
  title={System identification using Kautz models}, 
  year={1994},
  volume={39},
  number={6},
  pages={1276-1282},
  doi={10.1109/9.293196}}

@ARTICLE{Ninness1999,
  author={Ninness, B. and Hjalmarsson, H. and Gustafsson, F.},
  journal={IEEE Transactions on Automatic Control}, 
  title={The fundamental role of general orthonormal bases in system identification}, 
  year={1999},
  volume={44},
  number={7},
  pages={1384-1406},
}

@ARTICLE{Qi2017a,
  author={Qi, Tian and Chen, Jie and Su, Weizhou and Fu, Minyue},
  journal={IEEE Transactions on Automatic Control}, 
  title={Control Under Stochastic Multiplicative Uncertainties: Part I, Fundamental Conditions of Stabilizability}, 
  year={2017},
  volume={62},
  number={3},
  pages={1269-1284},
}

@ARTICLE{Qi2017b,
  author={Su, Weizhou and Chen, Jie and Fu, Minyue and Qi, Tian},
  journal={IEEE Transactions on Automatic Control}, 
  title={Control Under Stochastic Multiplicative Uncertainties: Part II, Optimal Design for Performance}, 
  year={2017},
  volume={62},
  number={3},
  pages={1285-1300},
}

@ARTICLE{Bamieh2020,
  author={Bamieh, Bassam and Filo, Maurice},
  journal={IEEE Transactions on Automatic Control}, 
  title={An Input–Output Approach to Structured Stochastic Uncertainty}, 
  year={2020},
  volume={65},
  number={12},
  pages={5012-5027},
}

@article{Jianqi2021,
  title={Mean-Square Stability and Stabilizability Analyses of LTI Systems Under Spatially Correlated Multiplicative Perturbations},
  author={Jianqi Chen and Tian Qi and Jie Chen},
  year={2021},
  journal      = {arXiv preprint arXiv:2112.05363}
}

@article{Antoulas08,
author = {Gugercin, S. and Antoulas, A. C. and Beattie, C.},
title = {$\mathcal{H}_2$ Model Reduction for Large-Scale Linear Dynamical Systems},
journal = {SIAM Journal on Matrix Analysis and Applications},
volume = {30},
number = {2},
pages = {609-638},
year = {2008},
}

@ARTICLE{Giarre97,
  author={Giarre, L. and Milanese, M.},
  journal={IEEE Transactions on Automatic Control}, 
  title={Model quality evaluation in $\mathcal{H}_2$ identification}, 
  year={1997},
  volume={42},
  number={5},
  pages={691-698},
  doi={10.1109/9.580876}}

@incollection{KIRSCH2005243,
	title        = {Chapter 6 - Transfinite Diameter, Chebyshev Constant and Capacity},
	author       = {Siegfried Kirsch},
	year         = 2005,
	booktitle    = {Geometric Function Theory},
	publisher    = {North-Holland},
	series       = {Handbook of Complex Analysis},
	volume       = 2,
	pages        = {243--308},
	doi          = {https://doi.org/10.1016/S1874-5709(05)80010-1},
	issn         = {1874-5709},
	url          = {https://www.sciencedirect.com/science/article/pii/S1874570905800101},
	editor       = {R. Kühnau},
	abstract     = {Abstract The aim of the present chapter is to survey alternate descriptions of the classical transfinite diameter due to Fekete and to review several generalizations of it. Here we lay emphasis mainly on the case of one complex variable. We shall generalize this notion in analogy to several situations in plane electrostatics. These include, among others, the Euclidean, hyperbolic and elliptic complex plane as support of a homogeneous medium or, more general an inhomogeneous isotropic medium in the presence of an external field. Throughout the chapter we shall outline the close connection of transfinite diameter and its generalizations with the theory of conformal and quasiconformal mappings and its applications in complex analysis.}
}

@article{menke_distribution_1985,
	title        = {On the distribution of {Tsuji} points},
	author       = {Menke, Klaus},
	year         = 1985,
	month        = sep,
	journal      = {Mathematische Zeitschrift},
	volume       = 190,
	number       = 3,
	pages        = {439--446},
	doi          = {10.1007/BF01215143},
	issn         = {0025-5874, 1432-1823},
	url          = {http://link.springer.com/10.1007/BF01215143},
	urldate      = {2023-05-14},
	language     = {en},
	file         = {Menke - 1985 - On the distribution of Tsuji points.pdf:files/1607/Menke - 1985 - On the distribution of Tsuji points.pdf:application/pdf}
}

@article{hachicha2014n4sid,
	title        = {N4SID and MOESP algorithms to highlight the ill-conditioning into subspace identification},
	author       = {Hachicha, Slim and Kharrat, Maher and Chaari, Abdessattar},
	year         = 2014,
	journal      = {International Journal of Automation and Computing},
	publisher    = {Springer},
	volume       = 11,
	number       = 1,
	pages        = {30--38}
}

@incollection{Ljung1998,
   abstract = {In this contribution we give an overview and discussion of the basic steps of System Identification. The four main ingredients of the process that takes us from observed data to a validated model are: (1) The data itself, (2) The set of candidate models, (3) The...},
   author = {Lennart Ljung},
   booktitle = {Signal Analysis and Prediction},
   doi = {10.1007/978-1-4612-1768-8_11},
   pages = {163-173},
   publisher = {Birkhäuser, Boston, MA},
   title = {System Identification},
   url = {https://link.springer.com/chapter/10.1007/978-1-4612-1768-8_11},
   year = {1998},
}

@book{chen2000,
	title        = {Control-Oriented System Identification: An {$\mathcal{H}_\infty$} Approach},  
	author       = { Jie Chen and Guoxiang Gu},
	year         = {2000},
publisher = {Wiley, New York}, 
}

@article{chiuso_ill-conditioning_2004,
	title        = {On the ill-conditioning of subspace identification with inputs},
	author       = {Chiuso, Alessandro and Picci, Giorgio},
	year         = 2004,
	month        = apr,
	journal      = {Automatica},
	volume       = 40,
	number       = 4,
	pages        = {575--589},
	doi          = {10.1016/j.automatica.2003.11.009},
	issn         = {00051098},
	url          = {https://linkinghub.elsevier.com/retrieve/pii/\\S0005109803003674},
	urldate      = {2022-10-18},
	abstract     = {There is experimental evidence that the performance of standard subspace algorithms from the literature (e.g. the N4SID method) may be surprisingly poor in certain experimental conditions. This happens typically when the past signals (past inputs and outputs) and future input spaces are nearly parallel. In this paper we argue that the poor behavior may be attributed to a form of ill-conditioning of the underlying multiple regression problem, which may occur for nearly parallel regressors. An elementary error analysis of the subspace identiÿcation problem, shows that there are two main possible causes of ill-conditioning. The ÿrst has to do with near collinearity of the state and future input subspaces. The second has to do with the dynamical structure of the input signal and may roughly be attributed to “lack of excitation”. Stochastic realization theory constitutes a natural setting for analyzing subspace identiÿcation methods. In this setting, we undertake a comparative study of three widely used subspace methods (N4SID, Robust N4SID and PO-MOESP). The last two methods are proven to be essentially equivalent and the relative accuracy, regarding the estimation of the (A; C) parameters, is shown to be the same. ? 2003 Elsevier Ltd. All rights reserved.},
	language     = {en},
	file         = {Chiuso and Picci - 2004 - On the ill-conditioning of subspace identification.pdf:files/818/Chiuso and Picci - 2004 - On the ill-conditioning of subspace identification.pdf:application/pdf}
}

@article{Bauer2000,
   abstract = {In this paper consistency and asymptotic normality of the estimates of MOESP type of subspace algorithms are established under fairly general assumptions on the input process. Abstract The MOESP type of subspace algorithms are used for the identi"cation of linear, discrete time, "nite-dimensional state-space systems. They are based on the geometric structure of covariance matrices and exploit the properties of the state vector extensively. In this paper the asymptotic properties of the algorithms are examined. The main results include consistency and asymptotic normality for the estimates of the system matrices, under suitable assumptions on the noise sequence, the input process and the underlying true system.},
   author = {D Bauer and M Jansson},
   journal = {Automatica},
   keywords = {Asymptotic analysis,Identi"cation,Linear systems,Subspace methods},
   pages = {497-509},
   title = {Analysis of the asymptotic properties of the MOESP type of subspace algorithms},
   volume = {36},
   year = {2000},
}

@article{Van1996,
   abstract = {The main goal of this paper is to propose application of modern multidimensional systems identification algorithms of the subspace identification theory in the context of fMRI data analysis. The methods originated in 1990s in the field of process control and identification and yield robust linear model parameter estimates for systems with many inputs, outputs and states. Our ultimate goal is to establish an alternative to the DCM analysis procedure which would eliminate its main drawbacks, namely the need to pre-define the models structure. The paper discusses results based on simulated data provided by the DCM simulator in the SPM toolbox. Several scenarios are presented, with varying amount of noise and number of data samples.},
   author = {Peter Van Overschee and Bart De Moor},
   doi = {10.1007/978-1-4613-0465-4},
   journal = {Subspace Identification for Linear Systems},
   publisher = {Springer US},
   title = {Subspace Identification for Linear Systems},
   year = {1996},
}

@article{ho1966effective,
	title        = {Effective construction of linear state-variable models from input/output functions},
	author       = {HO, BL and K{\'a}lm{\'a}n, Rudolf E},
	year         = 1966,
	journal      = {at-Automatisierungstechnik},
	publisher    = {Oldenbourg Wissenschaftsverlag},
	volume       = 14,
	number       = {1-12},
	pages        = {545--548}
}

@INPROCEEDINGS{tsiamis_linear_2021,
  author={Tsiamis, Anastasios and Pappas, George J.},
  booktitle={2021 60th IEEE Conference on Decision and Control (CDC)}, 
  title={Linear Systems can be Hard to Learn}, 
  year={2021},
  volume={},
  number={},
  pages={2903-2910},
  doi={10.1109/CDC45484.2021.9682778}}

@article{trefethen_numerical_2020,
	title        = {Numerical {Conformal} {Mapping} with {Rational} {Functions}},
	author       = {Trefethen, Lloyd N.},
	year         = 2020,
	month        = nov,
	journal      = {Computational Methods and Function Theory},
	volume       = 20,
	number       = {3-4},
	pages        = {369--387},
	doi          = {10.1007/s40315-020-00325-w},
	issn         = {1617-9447, 2195-3724},
	url          = {https://link.springer.com/10.1007/s40315-020-00325-w},
	urldate      = {2023-05-15},
	abstract     = {New algorithms are presented for numerical conformal mapping based on rational approximations and the solution of Dirichlet problems by least-squares ﬁtting on the boundary. The methods are targeted at regions with corners, where the Dirichlet problem is solved by the “lightning Laplace solver” with poles exponentially clustered near each singularity. For polygons and circular polygons, further simpliﬁcations are possible.},
	language     = {en},
	file         = {Trefethen - 2020 - Numerical Conformal Mapping with Rational Function.pdf:files/1668/Trefethen - 2020 - Numerical Conformal Mapping with Rational Function.pdf:application/pdf}
}

@inproceedings{li2022fundamental,
	title        = {Fundamental Limit on SISO System Identification},
	author       = {Li, Jiayun and Sun, Shuai and Mo, Yilin},
	year         = 2022,
	booktitle    = {2022 IEEE 61st Conference on Decision and Control (CDC)},
	volume       = {},
	number       = {},
	pages        = {856--861},
	doi          = {10.1109/CDC51059.2022.9993203}
}

@article{Tsiamis2019,
   abstract = {In this paper, we analyze the finite sample complexity of stochastic system identification using modern tools from machine learning and statistics. An unknown discrete-time linear system evolves over time under Gaussian noise without external inputs. The objective is to recover the system parameters as well as the Kalman filter gain, given a single trajectory of output measurements over a finite horizon of length N. Based on a subspace identification algorithm and a finite number of N output samples, we provide non-asymptotic high-probability upper bounds for the system parameter estimation errors. Our analysis uses recent results from random matrix theory, self-normalized martingales and SVD robustness, in order to show that with high probability the estimation errors decrease with a rate of 1/\sqrt N up to logarithmic terms. Our non-asymptotic bounds not only agree with classical asymptotic results, but are also valid even when the system is marginally stable.},
   author = {Anastasios Tsiamis and George J. Pappas},
   doi = {10.1109/CDC40024.2019.9029499},
   isbn = {9781728113982},
   issn = {25762370},
   journal = {Proceedings of the IEEE Conference on Decision and Control},
   month = {12},
   pages = {3648-3654},
   publisher = {Institute of Electrical and Electronics Engineers Inc.},
   title = {Finite Sample Analysis of Stochastic System Identification},
   volume = {2019-December},
   year = {2019},
}

@article{oymak_revisiting_2022,
	title        = {Revisiting {Ho}–{Kalman}-{Based} {System} {Identification}: {Robustness} and {Finite}-{Sample} {Analysis}},
	shorttitle   = {Revisiting {Ho}–{Kalman}-{Based} {System} {Identification}},
	author       = {Oymak, Samet and Ozay, Necmiye},
	year         = 2022,
	month        = apr,
	journal      = {IEEE Transactions on Automatic Control},
	volume       = 67,
	number       = 4,
	pages        = {1914--1928},
	doi          = {10.1109/TAC.2021.3083651},
	issn         = {0018-9286, 1558-2523, 2334-3303},
	url          = {https://ieeexplore.ieee.org/document/9440770/},
	urldate      = {2023-03-02},
	abstract     = {We consider the problem of learning a realization for a linear time-invariant (LTI) dynamical system from input/output data. Given a single input/output trajectory, we provide ﬁnite time analysis for learning the system’s Markov parameters, from which a balanced realization is estimated using the classical Ho–Kalman algorithm. By proving a robustness result for the Ho–Kalman algorithm and combining it with the sample complexity results for Markov parameters, we show how much data are needed to approximate the balanced realization of the system up to a desired accuracy with high probability.},
	language     = {en},
	file         = {Oymak and Ozay - 2022 - Revisiting Ho–Kalman-Based System Identification .pdf:files/1298/Oymak and Ozay - 2022 - Revisiting Ho–Kalman-Based System Identification .pdf:application/pdf}
}

@article{Verhaegen1994,
   abstract = {In this paper we describe two algorithms to identify a linear, time-invariant, finite dimensional state space model from input-output data. The system to be identified is assumed to be excited by a measurable input and an unknown process noise and the measurements are disturbed by unknown measurement noise. Both noise sequences are discrete zero-mean white noise. The first algorithm gives consistent estimates only for the case where the input also is zero-mean white noise, while the same result is obtained with the second algorithm without this constraint. For the special case where the input signal is discrete zero-mean white noise, it is explicitly shown that this second algorithm is a special case of the recently developed Multivariable Output-Error State Space (moesp) class of algorithms based on instrumental variables. The usefulness of the presented schemes is highlighted in a realistic simulation study. © 1993.},
   author = {Michel Verhaegen},
   doi = {10.1016/0005-1098(94)90229-1},
   issn = {0005-1098},
   issue = {1},
   journal = {Automatica},
   keywords = {State space,linear algebra,linear systems,system identification},
   month = {1},
   pages = {61-74},
   publisher = {Pergamon},
   title = {Identification of the deterministic part of MIMO state space models given in innovations form from input-output data},
   volume = {30},
   year = {1994},
}

@article{mo_network_2009,
	title        = {Network {Energy} {Minimization} via {Sensor} {Selection} and {Topology} {Control}},
	author       = {Mo, Yilin and Ambrosino, Roberto and Sinopoli, Bruno},
	year         = 2009,
	month        = sep,
	journal      = {IFAC Proceedings Volumes},
	volume       = 42,
	number       = 20,
	pages        = {174--179},
	doi          = {10.3182/20090924-3-IT-4005.00030},
	issn         = 14746670,
	url          = {https://linkinghub.elsevier.com/retrieve/pii/S1474667015361553},
	urldate      = {2023-03-04},
	abstract     = {In this paper we consider the problem of state estimation over a wireless ad-hoc sensor network. At each time step, only a subset of sensors are selected to form a multi-hop network, through which they send their observations to a fusion center, where a Kalman ﬁlter is implemented to perform the state estimation. The sensors and network topology are chosen to minimize total energy consumption of the network, while maintaining a desired estimation accuracy. We propose a convex relaxation approach to approximately solve this problem. We also provide a numerical example to further illustrate the eﬃciency of our algorithm.},
	language     = {en},
	file         = {Mo et al. - 2009 - Network Energy Minimization via Sensor Selection a.pdf:files/1313/Mo et al. - 2009 - Network Energy Minimization via Sensor Selection a.pdf:application/pdf}
}

@article{van_den_hof_system_1995,
	series = {Trends in {System} {Identification}},
	title = {System identification with generalized orthonormal basis functions},
	volume = {31},
	issn = {0005-1098},
	url = {https://www.sciencedirect.com/science/article/pii/0005109895000744},
	doi = {10.1016/0005-1098(95)00074-4},
	abstract = {A least-squares identification method is studied that estimates a finite number of expansion coefficients in the series expansion of a transfer function, where the expansion is in terms of recently introduced generalized basis functions. The basis functions are orthogonal in H2, and generalize the pulse, Laguerre and Kautz bases. One of their important properties is that, when chosen properly, they can substantially increase the speed of convergence of the series expansion. This leads to accurate approximate models with only a few coefficients to be estimated. Explicit bounds are derived for the bias and variance errors that occur in parameter estimates as well as in the resulting transfer function estimates.},
	number = {12},
	urldate = {2024-01-09},
	journal = {Automatica},
	author = {Van Den Hof, Paul M. J. and S.C. Heuberger, Peter and Bokor, József},
	month = dec,
	year = {1995},
	keywords = {System identification, FIR models, linear regression, modelling errors, orthogonal basis functions, system approximation},
	pages = {1821--1834},
	file = {1-s2.0-0005109895000744-main.pdf:files/1951/1-s2.0-0005109895000744-main.pdf:application/pdf;ScienceDirect Snapshot:files/1887/0005109895000744.html:text/html},
}

@article{heuberger_generalized_1995,
	title = {A generalized orthonormal basis for linear dynamical systems},
	volume = {40},
	issn = {00189286},
	url = {http://ieeexplore.ieee.org/document/376057/},
	doi = {10.1109/9.376057},
	abstract = {In many areas of signal, system, and control theory, orthogonal functions play an important role in issues of analysis and design. In this paper, it is shown that there exist orthogonal functions that, in a natural way, are generated by stable linear dynamical systems and that compose an orthonormal basis for the signal space e;. To this end, use is made of balanced realizations of inner transfer functions. The orthogonal functions can be considered as generalizations of, e.g., the pulse functions, Laguerre functions, and Kautz functions, and give rise to an alternative series expansion of rational transfer functions. It is shown how we can exploit these generalized basis functions to increase the speed of convergence in a series expansion, i.e., to obtain a good approximation by retaining only a finite number of expansion coefficients. Consequences for identification of expansion coefficients are analyzed, and a bound is formulated on the error that is made when approximating a system by a finite number of expansion coefficients.},
	language = {en},
	number = {3},
	urldate = {2024-01-09},
	journal = {IEEE Transactions on Automatic Control},
	author = {Heuberger, P.S.C. and Van Den Hof, P.M.J. and Bosgra, O.H.},
	month = mar,
	year = {1995},
	pages = {451--465},
	file = {Heuberger et al. - 1995 - A generalized orthonormal basis for linear dynamic.pdf:files/1889/Heuberger et al. - 1995 - A generalized orthonormal basis for linear dynamic.pdf:application/pdf},
}

@article{toth_asymptotically_2009,
	title = {Asymptotically optimal orthonormal basis functions for {LPV} system identification},
	volume = {45},
	issn = {00051098},
	url = {https://linkinghub.elsevier.com/retrieve/pii/S0005109809000557},
	doi = {10.1016/j.automatica.2009.01.010},
	abstract = {A global model structure is developed for parametrization and identiﬁcation of a general class of Linear Parameter-Varying (LPV) systems. By using a ﬁxed orthonormal basis functions (OBFs) structure, a linearly parameterized model structure follows for which the coeﬃcients are dependent on a scheduling signal. An optimal set of OBFs for this model structure is selected on the basis of local linear dynamic properties of the LPV system (system poles) that occur for diﬀerent constant scheduling signals. The selected OBF set guarantees in an asymptotic sense the least worst-case modeling error for any local model of the LPV system. Through the fusion of the Kolmogorov n-width theory and Fuzzy c-Means clustering, an approach is developed to solve the OBF-selection problem for discrete-time LPV systems, based on the clustering of observed sample system poles.},
	language = {en},
	number = {6},
	urldate = {2024-01-09},
	journal = {Automatica},
	author = {Tóth, Roland and Heuberger, Peter S.C. and Van Den Hof, Paul M.J.},
	month = jun,
	year = {2009},
	pages = {1359--1370},
	file = {Tóth 等 - 2009 - Asymptotically optimal orthonormal basis functions.pdf:files/1965/Tóth 等 - 2009 - Asymptotically optimal orthonormal basis functions.pdf:application/pdf},
}

@book{van_den_hof_system_2005,
	address = {London},
	isbn = {978-1-84628-178-5},
	url = {https://doi.org/10.1007/1-84628-178-4_4},
	title = {Modelling and {Identification} with {Rational} {Orthogonal} {Basis} {Functions}},
	publisher = {Springer London},
	author = {Heuberger, Peter S.C. and Van den Hof, Paul M.J. and Wahlberg, Bo},
	year = {2005},
	doi = {10.1007/1-84628-178-4_4},
}

@inproceedings{reginato_selecting_2007,
	title = {On selecting the {MIMO} {Generalized} {Orthonormal} {Basis} {Functions} poles by using {Particle} {Swarm} {Optimization}},
	url = {https://ieeexplore.ieee.org/abstract/document/7068981},
	doi = {10.23919/ECC.2007.7068981},
	abstract = {This paper deals with the problem of finding the pole(s) in Orthonormal Basis Functions (OBF) models. In fact, any stable system can be exactly represented by an infinite orthonormal functions series, assuming that the basis used in such representation is complete. However, in practice a truncated basis functions expansion is used. For certain basis choice and fixed number of functions, there is a pole or set of poles that minimizes the truncation error. The problem is then to find such set. Therefore, the present paper is focused on the problem of finding the basis functions poles when more than one dynamic is involved, for instance, in SISO Generalized Orthonormal Basis Functions (GOBF) models or MIMO Laguerre, Kautz and/or GOBF models. It is proposed and analyzed here the use of Particle Swarm Optimization (PSO) for such problem. Simulation results presented shows the validity of the proposed method.},
	urldate = {2024-03-21},
	booktitle = {2007 {European} {Control} {Conference} ({ECC})},
	author = {Reginato, Bruno C. and Oliveira, Gustavo H. C.},
	month = jul,
	year = {2007},
	keywords = {Convergence, Mathematical model, MIMO, Optimization, System Identification, Equations, Approximation methods, Linear Systems, Orthonormal Basis Functions, Particle swarm optimization, Particle Swarm Optimization},
	pages = {5182--5188},
	file = {IEEE Xplore Abstract Record:files/2145/7068981.html:text/html},
}

@article{sabatini_hybrid_2000,
	title = {A hybrid genetic algorithm for estimating the optimal time scale of linear systems approximations using {Laguerre} models},
	volume = {45},
	issn = {1558-2523},
	url = {https://ieeexplore.ieee.org/document/855574},
	doi = {10.1109/9.855574},
	abstract = {We deal with the problem of finding the optimal time scale of the truncated Laguerre series using numerical search techniques. We develop a hybrid genetic algorithm (GA) to search the nonlinear, multimodal squared-error function that results from least-squares approximations of the impulse response of causal linear time-invariant stable systems. The hybrid GA incorporates a Newton-Raphson (NR) local optimizer for fast convergence to the global minimum point. The proposed method competes favorably with the pure GA in solution accuracy (the number of function evaluations being the same) and with an established gradient-directed optimization algorithm in number of function evaluations (the solution accuracy being the same).},
	number = {5},
	urldate = {2024-03-21},
	journal = {IEEE Transactions on Automatic Control},
	author = {Sabatini, A.M.},
	month = may,
	year = {2000},
	keywords = {Signal processing, Linear systems, Automatic control, Mathematical model, Genetic algorithms, Gradient methods, Linear approximation, Optimization methods, Search methods, Signal processing algorithms},
	pages = {1007--1011},
	file = {IEEE Xplore Full Text PDF:files/2112/Sabatini - 2000 - A hybrid genetic algorithm for estimating the opti.pdf:application/pdf},
}

@article{mi_adaptive_2021,
	title = {Adaptive {Rational} {Orthogonal} {Basis} {Functions} for {Identification} of {Continuous}-{Time} {Systems}},
	volume = {66},
	issn = {0018-9286, 1558-2523, 2334-3303},
	url = {https://ieeexplore.ieee.org/document/9096566/},
	doi = {10.1109/TAC.2020.2995827},
	abstract = {In this article, we present two algorithms for identiﬁcation of continuous-time linear time-invariant systems in frequency domain. These algorithms are constructed in terms of continuous rational orthogonal basis functions. First, a two-stage algorithm is developed through one-by-one selection of poles for the basis functions, and such consecutive selection is easy to realize. Next, a direct algorithm is proposed to select poles for the rational orthogonal basis functions. Different poles result in different bases and these selections guarantee that better approximations can be reached. For different systems, there are different sequences of poles selected for basis functions, which shows the adaptivity of the proposed algorithms. A numerical example is given to show that the proposed algorithms are useful. Also, in this example, comparison is made with the method that uses rational orthogonal basis functions in which all poles of the basis functions are true poles of the original system.},
	language = {en},
	number = {4},
	urldate = {2024-03-21},
	journal = {IEEE Transactions on Automatic Control},
	author = {Mi, Wen and Zheng, Wei Xing},
	month = apr,
	year = {2021},
	pages = {1809--1816},
	file = {Mi 和 Zheng - 2021 - Adaptive Rational Orthogonal Basis Functions for I.pdf:files/2125/Mi 和 Zheng - 2021 - Adaptive Rational Orthogonal Basis Functions for I.pdf:application/pdf},
}

@article{mi_frequency-domain_2012,
	title = {Frequency-domain identification: {An} algorithm based on an adaptive rational orthogonal system},
	volume = {48},
	issn = {0005-1098},
	shorttitle = {Frequency-domain identification},
	url = {https://www.sciencedirect.com/science/article/pii/S0005109812000982},
	doi = {10.1016/j.automatica.2012.03.002},
	abstract = {This paper presents a new adaptive algorithm for frequency-domain identification. The algorithm is related to the rational orthogonal system (Takenaka–Malmquist system). This work is based on an adaptive decomposition algorithm previously proposed for decomposing the Hardy space functions, in which a greedy sequence is obtained according to the maximal selection criterion. We modify the algorithm through necessary changes for system identification.},
	number = {6},
	urldate = {2024-03-21},
	journal = {Automatica},
	author = {Mi, Wen and Qian, Tao},
	month = jun,
	year = {2012},
	keywords = {System identification, Adaptive algorithm, Frequency-domain identification, Greedy algorithm, Rational approximation},
	pages = {1154--1162},
}

@inproceedings{darwish2015perspectives,
  title={Perspectives of orthonormal basis functions based kernels in Bayesian system identification},
  author={Darwish, Mohamed and Pillonetto, Gianluigi and T{\'o}th, Roland},
  booktitle={2015 54th IEEE Conference on Decision and Control (CDC)},
  pages={2713--2718},
  year={2015},
  organization={IEEE}
}

@article{darwish_bayesian_2017,
	series = {20th {IFAC} {World} {Congress}},
	title = {Bayesian {Frequency} {Domain} {Identification} of {LTI} {Systems} with {OBFs} {Kernels}*},
	volume = {50},
	issn = {2405-8963},
	url = {https://www.sciencedirect.com/science/article/pii/S2405896317313034},
	doi = {10.1016/j.ifacol.2017.08.845},
	abstract = {Regularised Frequency Response Function (FRF) estimation based on Gaussian process regression formulated directly in the frequency-domain has been introduced recently The underlying approach largely depends on the utilised kernel function, which encodes the relevant prior knowledge on the system under consideration. In this paper, we show how to construct a rich class of kernel functions, directly in the frequency-domain, based on Orthonormal Basis Functions (OBFs), which is capable of representing a wide range of dynamical properties, e.g., stability, resonance frequencies, damping, etc, in terms of the poles of the employed basis functions that are treated as hyperparameters to efficiently shape the model class, i.e., the prior in the corresponding Bayesian setting. This class of kernel functions also implicitly guarantees the stability of the estimated FRF. The generating poles of the OBFs are tuned along with other hyperparameters, e.g., noise variance, by maximising the marginal likelihood. Multiple case studies are considered to show the potential of the considered kernels.},
	number = {1},
	urldate = {2024-03-31},
	journal = {IFAC-PapersOnLine},
	author = {Darwish, Mohamed A. H. and Lataire, John and Tóth, Roland},
	month = jul,
	year = {2017},
	keywords = {Frequency-domain, Gaussian process regression, Kernel functions, Orthonormal basis functions, Regularisation},
	pages = {6238--6243},
	file = {全文:files/2156/Darwish 等 - 2017 - Bayesian Frequency Domain Identification of LTI Sy.pdf:application/pdf},
}

@article{ninness_asymptotic_1996,
	series = {13th {World} {Congress} of {IFAC}, 1996, {San} {Francisco} {USA}, 30 {June} - 5 {July}},
	title = {Asymptotic {Analysis} of {MIMO} {System} {Estimates} by the {Use} of {Orthonormal} {Bases}},
	volume = {29},
	issn = {1474-6670},
	url = {https://www.sciencedirect.com/science/article/pii/S147466701758355X},
	doi = {10.1016/S1474-6670(17)58355-X},
	abstract = {This paper provides asymptotic bias and variance analysis for MIMO system estimate{\textasciitilde} obtained by using generalizations of FIR model structures and least squares techniques. The generalizations are such that prior approximate knowledge of the system poles may be incorporated. The obtained variance expressions provide extensions to well known results that have previously been derived only for FIR model structures. Namely, the asymptotic covariance of the transfer matrix estimate is shown to be proportional not only to the noise-to-signal ratio, but also to a frequency dependent term that depends on the basis functions chosen. By examining a similar expression for the hias error it is shown that it is not possible to minimise the bias error at a particular frequency without increasing the variance error, and vice-versa.},
	number = {1},
	urldate = {2024-01-09},
	journal = {IFAC Proceedings Volumes},
	author = {Ninness, Brett and Carlos Gómez, Juan},
	month = jun,
	year = {1996},
	keywords = {Identification, Estimation Theory, Frequency Response},
	pages = {4291--4296},
}

@book{Madsen2008,
author = {Henrik Madsen },
title = {Time Series Analysis},
year = {2008},
publisher = {Chapman \& Hall/CRC},
}

@article{Zadeh1962,
	title = {From circuit theory to system theory},
	volume = {50},
	journal = {Proc. IRE},
	author = {Zadeh, Lofti A.},
	year = {1962},
	pages = {856-865},
}

@inproceedings{Kalman1965,
	title = {Linear stochastic filtering-Reappraisal and outlook},
	booktitle = {Proc. Symp. System Theory Polytech},
	author = {Kalman, Rudolph E.},
	month = jan,
	year = {1965},
	pages = {197-205},
}

@book{Picci2015,
author = { A. Lindquist and G. Picci},
title = {Linear Stochastic Systems: A Geometric Approach to Modeling, Estimation and Identification},
year = {2015},
publisher = {Springer Verlag},
}

@article{semi_infinite_programming,
author = {Hettich, R. and Kortanek, K. O.},
title = {Semi-Infinite Programming: Theory, Methods, and Applications},
journal = {SIAM Review},
volume = {35},
number = {3},
pages = {380-429},
year = {1993},
doi = {10.1137/1035089},

URL = { 
    

        https://doi.org/10.1137/1035089
},
eprint = { 
    

        https://doi.org/10.1137/1035089
}
,
    abstract = { Starting from a number of motivating and abundant applications in §2, including control of robots, eigenvalue computations, mechanical stress of materials, and statistical design, the authors describe a class of optimization problems which are referred to as semi-infinite, because their constraints bound functions of a finite number of variables on a whole region. In §§3–5, first- and second-order optimality conditions are derived for general nonlinear problems as well as a procedure for reducing the problem locally to one with only finitely many constraints. Another main effort for achieving simplification is through duality in §6. There, algebraic properties of finite linear programming are brought to bear on duality theory in semi-infinite programming. Section 7 treats numerical methods based on either discretization or local reduction with the emphasis on the design of superlinearly convergent (SQP-type) methods. Taking this differentiable point of view, this paper can be considered to be complementary to the review given by Polak [SIAM Rev., 29 (1987), pp. 21–89] on the nondifferentiable approach. The last, short section briefly reviews some work done on parametric problems. }
}

@phdthesis{bachnas2023advancing,
title = "Advancing Process Control using Orthonormal Basis Functions",
author = "Bachnas, \{Ahmad Alrianes\}",
note = "Proefschrift.",
year = "2023",
month = feb,
day = "16",
language = "English",
isbn = "978-90-386-5656-4",
publisher = "Eindhoven University of Technology",
type = "Phd Thesis 1 (Research TU/e / Graduation TU/e)",
school = "Electrical Engineering",
}
\bibliographystyle{ieeetr}

\begin{biography}[{\includegraphics[width=1in,height=1.25in,clip,keepaspectratio]{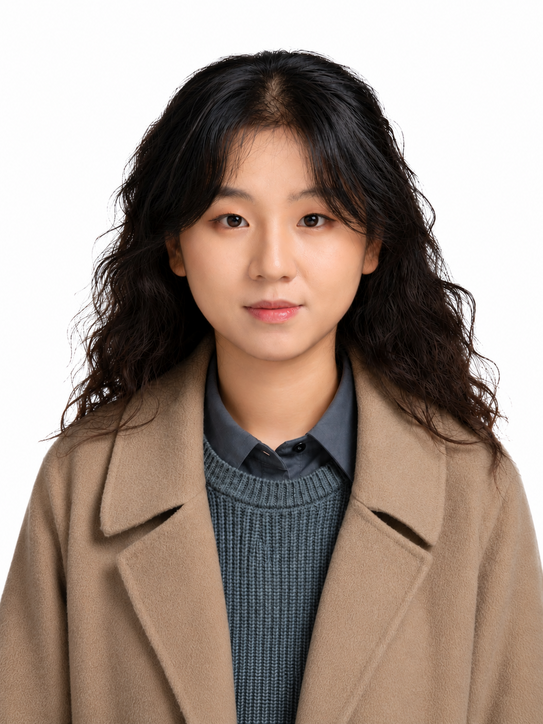}}]{Jiayun Li} received her Bachelor of Engineering degree from the Department of Automation, Tsinghua University in 2022. She is currently a Ph.D. candidate in the Department of Automation, Tsinghua University. Her research interests include system identification, learning-based control, with applications in robotics.
\end{biography}

\begin{biography}[{\includegraphics[width=1in,height=1.25in,clip,keepaspectratio]{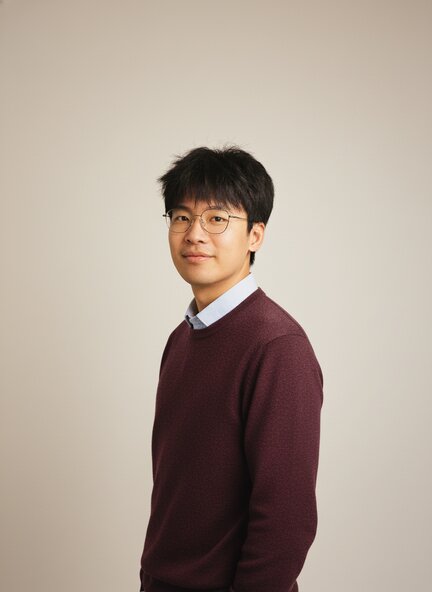}}]{Yiwen Lu}
received the B.Eng. degree from the Department of Automation, Tsinghua University, Beijing, China, in 2020, and the Ph.D. degree from the Department of Automation, Tsinghua University, in 2025. His research interests include adaptive and learning-based control, with applications in robotics.
\end{biography}
\vspace{-0.5em}
\begin{biography}[{\includegraphics[width=1in,height=1.25in,clip,keepaspectratio]{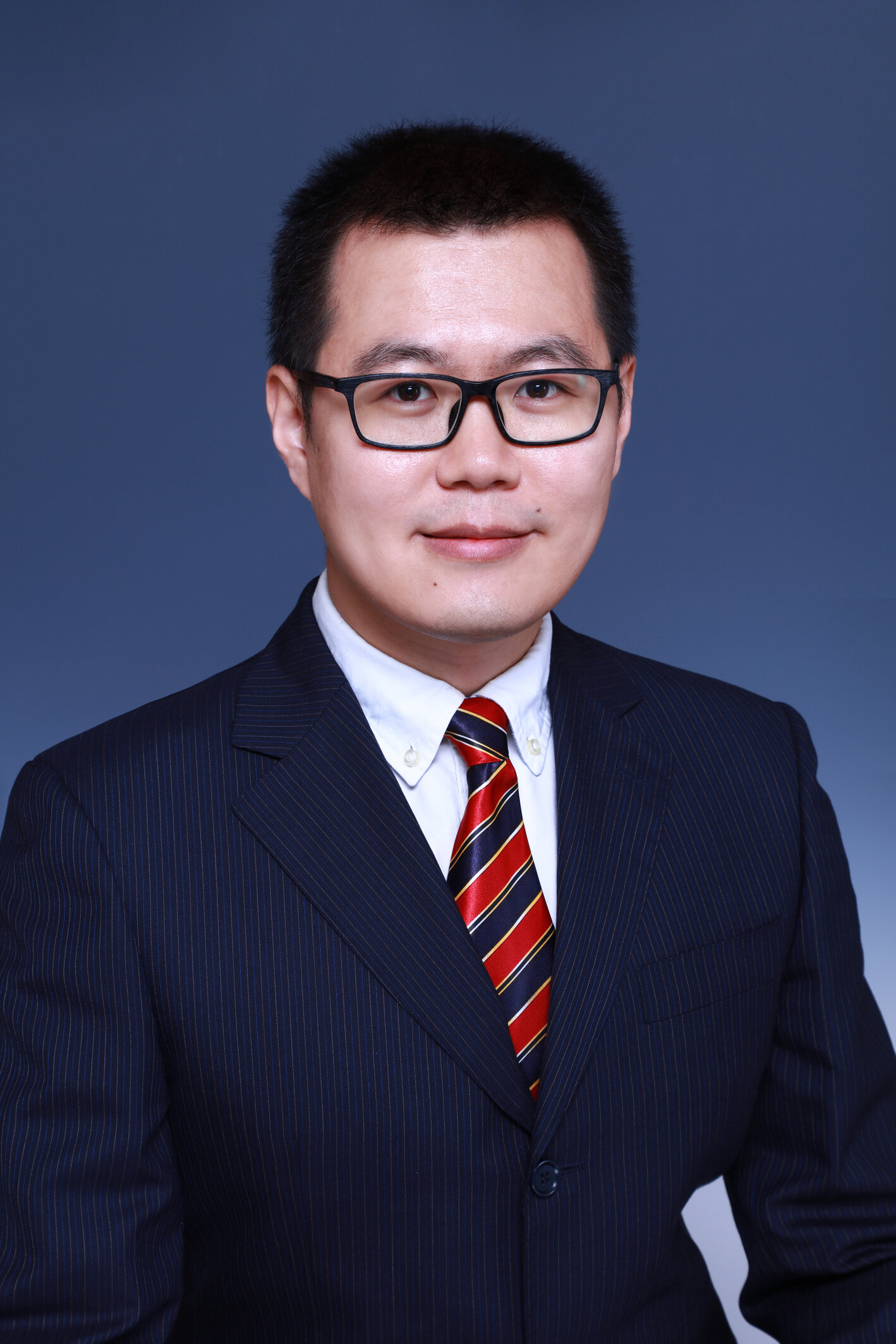}}]{Yilin Mo} is an Associate Professor in the Department of Automation, Tsinghua University. He received his Ph.D. In Electrical and Computer Engineering from Carnegie Mellon University in 2012 and his Bachelor of Engineering degree from Department of Automation, Tsinghua University in 2007. Prior to his current position, he was a postdoctoral scholar at Carnegie Mellon University in 2013 and California Institute of Technology from 2013 to 2015. He held an assistant professor position in the School of Electrical and Electronic Engineering at Nanyang Technological University from 2015 to 2018. His research interests include secure control systems and networked control systems, with applications in sensor networks and power grids.
\end{biography}
\vspace{-0.5em}
\begin{biography}[{\includegraphics[width=1in,height=1.25in,clip,keepaspectratio]{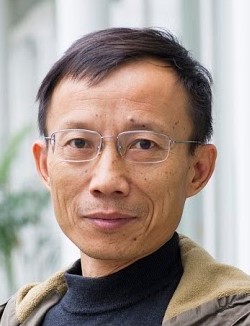}}]
	{Jie Chen}
	(S'87--M'89--SM'98--F'07) received the B.S. degree in aerospace engineering from Northwestern Polytechnic University, Xi’an, China, in 1982, the M.S.E. degree in electrical engineering, the M.A. degree in mathematics, and the Ph.D. degree in electrical engineering all from the University of Michigan, Ann Arbor, MI, USA, in 1985, 1987, and 1990, respectively.

From 1994 to 2014, he was a Professor with the University of California, Riverside, CA, USA, and was a Professor and a Chair with the Department of Electrical Engineering from 2001 to 2006. Presently, he is a Chair Professor with the Department of Electrical Engineering, City University of Hong Kong, Hong Kong. His research interests include linear multi-variable systems theory, system identification, robust control, optimization, time-delay systems, networked control, and multi-agent systems.
Dr.\ Chen is a Fellow of IEEE, AAAS, IFAC, SIAM, AAIA, and an IEEE Distinguished Lecturer.
\end{biography}

\end{document}